\definecolor{DRed}{rgb}{0,0,0}
\definecolor{Red}{rgb}{0,0,0}
\newcommand{\Red}{\color{Red}}
\definecolor{Blue}{rgb}{0,0,0}
\definecolor{Green}{rgb}{0,0,0}
\definecolor{Yellow}{rgb}{0,0,0}
\newcommand{\Yellow}{\color{Yellow}}
\newtheorem{example}{Example}[section]
\newtheorem{theorem}{Theorem}[section]
\newtheorem{corollary}{Corollary}[section]
\newtheorem{proposition}{Proposition}[section]
\newtheorem{remark}{Remark}[section]
\newtheorem{lemma}{Lemma}[section]
\newtheorem{assumption}{Assumption}
\newtheorem{ntc}{Need To Correct}
\title{\textbf{\textsc{Optimal Kernel Estimation of Spot Volatility of Stochastic Differential Equations}}}
\author{
Jos\'{e} E. Figueroa-L\'{o}pez
\thanks{Department of Mathematics, Washington University in St. Louis, St. Louis, MO 63130, USA.
{\tt figueroa@math.wustl.edu}. Research supported in part by the NSF Grants: DMS-1149692 and DMS-1613016.}
\and
Cheng Li
\thanks{Department of Statistics, Purdue University, West Lafayette, IN, 47907, USA.
{\tt li1534@purdue.edu}.}
}
\begin{document}
\maketitle

\abstract{
	Kernel Estimation is one of the most widely used estimation methods in non-parametric Statistics, having a wide-range of applications, including spot volatility estimation of stochastic processes. The selection of bandwidth and kernel function is of great importance, especially for the finite sample settings commonly encountered in econometric applications. In the context of spot volatility estimation, most of the proposed selection methods are either largely heuristic (e.g., cross-validation methods) or just formally stated without any feasible implementation (e.g., plug-in methods). In this work, an objective method of bandwidth and kernel selection is proposed, under some {mild} conditions on the volatility, which not only cover classical Brownian {motion} driven dynamics but also some processes driven by long-memory fractional Brownian motions or other Gaussian processes.
	Under such {a unifying framework}, we characterize the leading order terms of the Mean Squared Error, which are also ratified by central limit theorems for the estimation error. As a byproduct, an  approximated optimal bandwidth is then obtained in closed form, which is shown to be asymptotically equivalent to the true optimal bandwidth. This result allows us to develop a feasible plug-in type bandwidth selection procedure, for which, as a sub-problem, we propose a new estimator of the volatility of volatility. The optimal selection of kernel function is also discussed. For Brownian Motion type volatilities, the optimal kernel function is proved to be the exponential kernel, which is also shown to have desirable computational properties. For fractional Brownian motion type volatilities, numerical results to compute the optimal kernel are devised and, for the deterministic volatility case, explicit optimal kernel functions of different orders are derived. Finally, simulation studies further confirm the good performance of the proposed methods.}
\newline

{\bf Keywords:} spot volatility estimation; kernel estimation; bandwidth selection; kernel function selection; vol vol estimation.

\section{Introduction}

{It is no mystery that mathematical finance is} greatly influenced by the Geometric Brownian Motion in {the} Black-Scholes-Merton's option pricing model, which assumes a constant volatility parameter. {The latter assumption has greatly been refuted by many empirical studies performed on both asset and option price data. Two nonexclusive general approaches, {namely local and stochastic volatility,} have been proposed in the literature to incorporate the stylized features of market volatilities.} As a more general setting, the log price of an asset is usually assumed to follow {the dynamics} $dX_t = \mu_t dt + \sigma_t dW_t$, where {the volatility $\sigma_t$ may vary through time, in either deterministic or stochastic ways.} As a result, the estimation of the spot volatility has become an important and attractive problem, especially with the availability of high frequency data (HFD). {Accurate estimation of the spot volatility not only helps market participants to better assess and characterize the behavior of the volatility through time but is also crucial in many problems of finance such as option pricing, portfolio selection, and risk management.}

In this work, we revisit the problem of spot volatility estimation by kernel methods. Kernel estimation has a long history, starting with Rosenblatt et al. (1956) for density estimation. {Extensive treatments of the method can be found in many textbooks, such as, Tsybakov (2008).} The basic idea is to take {a} weighted average of the data where the weights are given by a kernel function that is appropriately scaled by a bandwidth parameter. {The selection of the bandwidth and the kernel function are of great importance for the performance of the kernel estimator in a finite sample setting.} {Bandwidth} selection methods have been thoroughly studied for density estimation and kernel regression. {Broadly there are two general approaches: plug-in type and cross validation methods. We refer the reader to Hall (1983), Park and Marron (1990), Park and Turlach (1992), Cao et al. (1994), and Jones et al. (1996), for a more in depth introduction and comparison of these methods. The problem of kernel selection has also been considered by, for instance,} Epanechnikov (1969) {and van Eeden (1985)}.

{In the context of spot volatility estimation,} Foster and Nelson (1996) studied a rolling window estimator, which {can be seeing as a kernel estimator with a compactly supported kernel function.} They established the point-wise asymptotic normality of the estimator, and drew some conclusions about the optimal window length (i.e., bandwidth) and the optimal weight functions (kernel functions). However, in spite of the non-parametric model setting, {the volatility is constrained to have a specific degree} of smoothness (see Assumption A (vii) and (viii) therein). Also, the selection of bandwidth and kernel function is not studied systematically, {since it presumed a strict relationship between the window's length and the sample size (see Assumption D therein). Under such a relationship, they obtained the optimal kernel weights and separately established the optimal constant appearing in the formula for the window length, though only for the flat-weights case (i.e., a uniform kernel function)}. Fan and Wang (2008) also {shows} a point-wise asymptotic normality for a general kernel estimator under some conditions on the order of smoothness of the volatility processes (in the sense of convergence in probability) and a specific constraint on the rate of convergence of the bandwidth (Condition A4 therein), without any considerations on the optimal bandwidth selection problem. The latter assumption on the bandwidth allows them to neglect the error coming from approximating the spot volatility by a kernel weighted volatility (we refer the reader to Section \ref{Section:CLT} for details), but the achieved convergence rates of the kernel estimator are suboptimal. Also, the {optimal selection of the} kernel function was not considered\footnote{{Indeed, with a suboptimal convergence rate, the selection of kernel function is generally not well defined}}.  {A recent paper in the same vein is Mancini et al. (2015), where the asymptotic normality of a more general class of spot volatility estimators, which includes kernel estimators, is studied without considering the kernel and bandwidth selection problems. Besides asymptotic normality, Kristensen (2010) also studied optimal bandwidth selection method, but under a strong path-wise {smoothness condition}, which has several practical and theoretical drawbacks}. {Indeed, even for simple volatility processes, it is not possible to verify the H\"older continuity needed for a central limit theorem with optimal rate.} {Furthermore, even though an `optimal' bandwidth is deduced in closed form therein, this is not well-defined if we want to attain optimal convergence rates for the estimation error (see Remark \ref{CmpKristensen} below for further details)}.
Based on a heuristic argument, an alternative cross-validation method of bandwidth selection {was} {also proposed {by Kristensen (2010)}}, but this algorithm {has high} computational complexity and the asymptotic properties were not {studied}.

Having discussed some previous work on the kernel estimation of spot volatility, we {now} mention some {motivating factors and objectives} of the present research. To begin with, we wish to adopt easily verifiable and general enough conditions to cover a wide range of frameworks without imposing strong constraints on the degree of smoothness of the volatility process. {From a theoretical point of view}, {we also aim to provide a formal justification of the optimal convergence rate of the kernel estimator and to establish central limit theorems {(CLT)} and asymptotic estimates of the mean square errors with optimal rates}. From the practical {side}, the two factors that affects the performance of the estimator, bandwidth and kernel function, {ought to be optimized jointly}, not separately, and meanwhile, the proposed method should remain feasible {and sufficiently efficient to be implementable for HFD.}

In this present work, we introduce a natural and relatively mild assumption on the volatility processes, which allows us to {obtain} feasible solutions to the optimal bandwidth and kernel selection problems. The assumption imposes a {local} homogeneous or scaling property for the covariance structure of the volatility process. {This assumption covers a wide range of frameworks including deterministic differentiable volatility processes {and} volatilities driven by Brownian Motion, long-memory fractional Brownian Motion, and more generally, {functions} of suitable Gaussian processes.}
{Under the referred assumption,} we characterize the  leading order terms of the Mean Squared Error (MSE). {As a byproduct, we} are then able to derive an approximated optimal bandwidth in closed form, which is shown to be asymptotically equivalent to the true optimal bandwidth. From this, the theoretical optimal convergence rate for {the estimation error} is rigorously identified. We then proceed to show that our optimal bandwidth formulas are feasible by proposing an iterated plug-in type algorithm for their implementation. An important {intermediate} step is to find an estimate of the volatility of volatility (vol vol), for which we propose a new {estimator based} on the two-time scale realized variance of Zhang et al. (2005). Consistency {and convergence rate} of our vol vol estimator are also established.  

Equipped with an explicit formula for the asymptotically optimal MSE, we {proceed to} setup a well posed problem for {optimal} kernel selection. Concretely, for Brownian motion driven volatilities, we prove that the optimal kernel function is the exponential kernel: $K(x) = 2^{-1} \exp (-|x|)$. Such a result formalizes and extends a previous result of Foster and Nelson (1994), {where only kernels of bounded support were considered. We} also show that, due to the nature of {the} data we are analyzing (namely, HFD), exponential kernel function enjoys outstanding computational advantages, as it reduces the time complexity for estimating the whole path of the volatility {on all grid points} from $O(n^{2})$ to $O(n)$. We {also} consider the volatility processes driven by the long-memory fractional Brownian motion and, in such a case, we provide numerical schemes to compute the optimal kernel function. For sufficiently smooth volatilities, {we also consider  higher order} kernel functions and, by using calculus of variation with constraints, we obtain optimal kernel functions of different orders. The second order optimal kernel is exactly Epanechnikov (1969) kernel and, for higher order cases, we provide ways to calculate those optimal kernel functions.

{To complement our asymptotic results based on MSE, asymptotic} normality of the kernel estimators is also established for two broad types of volatility processes: {It\^{o}} processes and continuous function of {some} Gaussian processes. {In this way, our results cover volatility} processes with flexible degrees of smoothness. The results are consistent with the leading order approximation of the MSE, so {that} CLT's with the optimal convergence rate are obtained. By contrast, as mentioned above, the CLT's of Fan and Wang (2008) and Kristensen (2010) have suboptimal convergence rate, while the analogous result of Foster and Nelson (1994) is limited to a specific smoothness order and strong constraints on the kernel function and bandwidth.

In the big picture, our {results} can be connected to several related topics in Statistical estimation of stochastic processes. For example, {our approach can be combined} with the Threshold Realized Power Variation (TPV) (cf. Mancini (2001, 2004), Figueroa-L\'{o}pez and Nisen (2013)). Furthermore, market micro-structure noise can also be included and methods like two-time scale (cf. Zhang et al. (2005)), multi-time scale (cf. Zhang (2006)) and kernel methods (cf. Barndorff et al. (2004)) {could potentially be combined with our kernel-based spot volatility estimators, though this problem is out of the scope of the present work.}

The rest of the paper is organized as follows. In Section 2, we introduce the kernel estimator and our assumptions, and verify that common volatility processes satisfy our assumptions. In Section 3, we deduce the leading order approximation of the MSE of the kernel estimator and solve the optimal bandwidth selection problem. Then, in Section 4, we deal with the optimal kernel function selection problem for different types of volatility processes. A feasible implementation approach of the optimal bandwidth is discussed in Section 5, where we also introduce the TSRVV estimator of vol vol. Central Limit Theorems of the kernel estimator are discussed in section 6. Finally in Section 7, we perform Monte Carlo studies. Some technical proofs are deferred to appendices.

\section{{Kernel Estimators and Assumptions}}\label{Estimator_and_Framework_Section}

In this section, we first introduce {the classical kernel estimator for the spot volatility.} We then discuss some needed smoothness conditions on the volatility processes and verify that most common volatility processes used in {the literature} indeed satisfy our assumptions. {Finally,} we discuss some regularity conditions on the kernel function and state some needed technical lemmas.

\subsection{{ Framework and Estimators}}

Throughout the paper, we will consider the following dynamic for the log price of an asset:
\begin{equation}\label{Asset_Dynamic}
dX_t = \mu_t dt + \sigma_t dB_t ,
\end{equation}
where all stochastic processes ($\mu := \{\mu_t\}_{t\geq 0}, \sigma := \{\sigma_t\}_{t\geq 0}, X := \{X_t\}_{t\geq 0}$, etc.) are defined on a complete filtered probability space $(\Omega, \mathscr{F}, \mathbb{F}=\{\mathscr{F}_t\}_{t\geq 0},\mathbb{P}_\theta)$ {and} where $\{\mathbb{P}_\theta:\theta \in \Theta\}$ is a class of probability {measures}, defined on $(\Omega, \mathscr{F})$ {and} indexed by $\theta \in\Theta$. {We also assume that $\mu$ and  $\sigma$ are adapted to the filtration $\mathbb{F}$ and} $B := \{B_t\}_{t\geq 0}$ is a standard Brownian Motion (BM) adapted to $\mathbb{F}$. We suppose throughout the paper {that we observe the log price process $X$ at the times $t_{i}:=t_{i,n} := iT/n$, $0 \leq i \leq n$}. We will use $\Delta_i^n X := \Delta X_{t_{i-1}} := X_{t_i} - X_{t_{i-1}}$ to denote the increments of log prices and {$\Delta_n = T/n$} to denote the time increments.
From standard theory of stochastic analysis, the integrated volatility $IV = \int_0^T \sigma_t^2 dt$ has a {classical} estimator, the {Realized} Quadratic Variation or Variance, {which is} defined as:
\begin{equation}\label{Usual_RV_Estimator}
RV_n := \widehat{[X]}_{T,n} := \sum_{i=1}^n (\Delta_i^n X)^2 \xrightarrow{\mathbb{P}} [X]_T = \int_0^T \sigma_t^2 dt.
\end{equation}
{Above, $[X]_T = \int_0^T \sigma_t^2 dt$ is the quadratic variation or integrated variance process. {In some literature},  $\int_0^T \sigma_t^2 dt$ is also sometimes called the integrated volatility of the process.} 
A natural way of turning the integrated {variance estimator} into a {spot volatility estimator} is to take a weighted average of the squared increments. Throughout, we consider the estimation of $\sigma_{\tau}^2$, {for a fixed time $\tau\in(0,T)$,} and we use a kernel function as weights so that more weights are given to points closer to $\tau$. Concretely, the Kernel weighted {spot variance} (c.f. \cite{fan2008spot} and \cite{kristensen2010nonparametric}) is defined as
\begin{equation}\label{Kernel_Weighted_Volatility}
KV_h (\tau) = \int _0^T K_h(s-\tau) \sigma_s^2 ds = \int _0^T K_h(s-\tau) d[X]_s ,
\end{equation}
where $K$ is the kernel function such that $\int K(x) dx = 1$ and we denote $K_h(x) := K(x/h)/h$, where $h$ is the so-called bandwidth. Some basic analysis shows us that $KV_h(\tau) \rightarrow \sigma_{\tau}^2$ as $h \rightarrow 0$, {under some} mild regularity conditions such as continuity. {By replacing $[X]$ with $\widehat{[X]}$, we then elucidate the Kernel weighted realized volatility estimator:}
\begin{equation}\label{Kernel_Estimator}
\hat{\sigma}^2_{\tau,n,h} = \widehat{KV}_{n,h}(\tau) := \int _0^T K_h(s-\tau) d\widehat{[X]}_{s,n} = \sum _{i=1}^n K_h(t_{i-1} - \tau) (\Delta_i^n X)^2 .
\end{equation}
At the first glance, we may expect that, {similarly to (\ref{Kernel_Weighted_Volatility}), $\widehat{KV}_{n,h}(\tau) \rightarrow \sigma_\tau^2$,} as $h \rightarrow 0$. However, since we are facing a finite sample of log prices, if we simply set $h \rightarrow 0$, the behaviour of $\widehat{KV}_{n,h}(\tau)$ is irregular. Therefore, in order to construct a consistent kernel estimator of the spot volatility, the bandwidth $h$ has to be selected carefully. 

{{As discussed in the introduction, the} literature on bandwidth and kernel selection methods for the spot volatility estimator (\ref{Kernel_Estimator}) is rather scarce}. \cite{fan2008spot} does not shed any light on {this problem, while} the conditions proposed by \cite{kristensen2010nonparametric} to address this problem are hard to be verified {and do not covered most of the models proposed in the literature} {(see Remark \ref{CmpKristensen} below for further details)}. In this work, we go further with better crafted conditions that are satisfied by most common volatility processes while enabling us to obtain explicit expressions for the optimal bandwidth and the optimal kernel function.

{Let us close by introducing} some notations that will be used throughout this paper. We will mainly consider limits when $n\rightarrow \infty$ and $h\rightarrow 0$. Without ambiguity and for brevity, we will use the 	simplified notations: $t_i := t_{i,n}, \Delta := \Delta_n, \Delta_i X := \Delta_i^n X, \hat{\sigma}_\tau := \hat{\sigma}_{\tau,n,h}$, etc. However, when we encounter $K(\cdot)$, we will always use $K(\cdot)$ to denote the kernel function itself and never drop the subscript of $K_h(x) = K(x/h)/h$.

\subsection{{Assumptions on the Volatility Process}}
In this section, we give the required assumptions on the volatility process that allow us to examine the rate of convergence of the kernel estimator defined in (\ref{Kernel_Estimator}). Our first assumption is a non-leverage assumption. This simplifying assumption {will make the problem more tractable and {is} widely used in the literature (see, e.g., \cite{kristensen2010nonparametric})}.

\begin{assumption}\label{IndependentCondition}
$(\mu,\sigma)$ is independent of $B$.
\end{assumption}

Another assumption that we need later is the boundedness of {some moments of} $\mu$ and $\sigma$.

\begin{assumption}\label{Boundedness_Condition}
There exists $M_T > 1$ such that $\mathbb{E}[\mu^4_t + \sigma^4_t] < M_T$, for all $0 \leq t \leq T$.
\end{assumption}

\begin{remark}
Note that this assumption implies $\mathbb{E}[|\mu_t|] < M_T$, $\mathbb{E}[\mu_t^2] < M_T${,} and $\mathbb{E}[\sigma^2_t] < M_T$, for all $t \in [0,T]$. We will use the notation $M_T$ later.
\end{remark}

Since we aim to study the problem of minimizing the Mean Squared Error of the estimator, we should correspondingly assume some smoothness of the expectation of the squared increments. The following assumption is {of this type, and,} as it turns out, is satisfied by most {volatility processes driven by BM (see {Proposition} \ref{BMProcesses} below for details)}.

\begin{assumption}\label{VolatilityCondition}
Suppose that for {a locally bounded function $A: \mathbb{R}_+ \rightarrow \mathbb{R}_+$, a function} $L: \mathbb{R}_+ \rightarrow \mathbb{R}_+$, and real numbers $\alpha, \beta \geq 0$ such that $\alpha + \beta > 1$, the variance process $V := \{V_t = \sigma_t^2 : t\geq 0\}$ satisfies
\begin{equation}\label{SVDefinition}
\begin{split}
\mathbb{E} [ (V_{t+h} - V_t)^2 ] 
= &
L (t) h +  O(h^{\alpha + \beta}), \quad t>0, h\rightarrow 0, \\
|\mathbb{E} [ (V_{t+h} - V_{t})(V_{t} - V_{t-s}) ]| 
\leq &
A(t) h^{\alpha}s^{\beta}, \quad {h>0,\;t>s>0}.
\end{split}
\end{equation}
\end{assumption}

Although the assumption above is enough for BM type volatility processes, we are also interested in other types of volatilities, {such as those driven by a fractional Brownian motion, that do not satisfy this condition}. To this end, we also consider the following more general assumption.

\begin{assumption}\label{VolatilityConditionGeneral}
Suppose that for $\gamma > 0$ and certain functions $L: \mathbb{R}_+ \rightarrow \mathbb{R}_+$, $C_\gamma: \mathbb{R} \times \mathbb{R} \rightarrow \mathbb{R}$, such that $C_\gamma$ is not identically zero and
\begin{equation}\label{HomoFunction}
\begin{split}
C_\gamma(hr,hs) & = h^{\gamma}C_\gamma(r,s), \quad
\mbox{ for } r, s \in \mathbb{R}, h \in \mathbb{R}_+,
\end{split}
\end{equation}
the variance process $V := \{V_t = \sigma_t^2 : t\geq 0\}$ satisfies
\begin{equation}\label{SCDefinitionGeneral}
\mathbb{E} [ (V_{t+r} - V_{t})(V_{t+s} - V_{t}) ] 
= L(t)C_\gamma(r,s) + o ((r^2 + s^2)^{\gamma/2}) , \quad r,s \rightarrow 0.
\end{equation}
Hereafter, we will also denote $C(r,s;t) = L(t) C_\gamma (r,s)$.
\end{assumption}

{{As shown in the next section, Assumption \ref{VolatilityConditionGeneral} is satisfied by} most common volatility models.}
\begin{remark}\label{CmpKristensen}
We now draw some connections {with the assumptions and work in \cite{kristensen2010nonparametric}. Therein,} the variance process {$\{V_{t}\}_{t\geq{}0}$} is assumed to satisfy  the following {pathwise condition}
\begin{equation}\label{Cndkristen}
|{V_{t + \delta} - V_{t}}| \leq \tilde{L}(t, |\delta|) |\delta|^\gamma + o(|\delta|^\gamma), \quad \delta \to 0,
\end{equation}
where $\tilde{L}(t, \cdot)$ is a slowly varying random function. {To gain some intuition about the plausibility of this assumption, let us suppose that $\{V_t\}$} is a Brownian motion. {In that case, the above holds for all $\gamma < 1/2$, but such choices of $\gamma$ can only produce} suboptimal convergence rate of the kernel estimator. {On the other, in light of L\'evy's modulus of continuity, 
$$
\sup_{t \in (0, T)} \limsup_{\delta \to 0} \frac{|B(t + \delta) - B(t)|}{\sqrt{2\delta\log (1 / \delta)}} = 1, \quad a.s.
$$
the condition {(\ref{Cndkristen})} holds for $\gamma = 1/2$, but only} if $\tilde{L}(t, \delta) \to \infty$, as $\delta \to 0$. {But, in that case, the optimal bandwidth selection formulas obtained in \cite{kristensen2010nonparametric} are not well defined as they presume that $\lim_{\delta\to{}0}\tilde{L}(t, \delta)=:\tilde{L}(t,0)$ is finite.} 
\end{remark}

{A function $C_{\gamma}$ satisfying the condition (\ref{HomoFunction}) is said to be homogeneous of order $\gamma$}.
There are several {preliminary properties} we need to establish regarding the previous assumption and the function $C_{\gamma}$ therein. Firstly, it is easy to see that Assumption \ref{VolatilityCondition} is a special case of Assumption \ref{VolatilityConditionGeneral} with $\gamma = 1$ and $C_\gamma(r,s) = \min\{|r|, |s|\} 1_{\{rs \geq 0\}}$. Therefore, throughout the paper we will refer to Assumption \ref{VolatilityConditionGeneral} rather than Assumption \ref{VolatilityCondition}.

{The next result shows the non-negative definiteness of {the function} $C_{\gamma}$}.
\begin{proposition}\label{Proposition_Nonnegative_Definiteness_of_C_Gamma}
Under Assumption \ref{VolatilityConditionGeneral}, {both $C_{\gamma}(\cdot, \cdot ; t)$ and $C_{\gamma}(\cdot, \cdot)$} are integrally non-negative definite. {That is, 
\begin{equation}\label{Eq:NPDDfn}
	{\iint K(x)K(y) C(x, y) dxdy \geq 0},
\end{equation}
for all functions $K: \mathbb{R} \rightarrow \mathbb{R}$ for which the integral therein is well-defined.}
\end{proposition}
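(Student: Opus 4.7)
The plan is to derive the integral non-negative definiteness from a finite-point positive semidefiniteness of $C_\gamma$, which in turn follows from expanding the non-negative expectation of a squared linear combination of increments of $V$ and then passing to the zero-scale limit.

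First, for any $t>0$, any $n \geq 1$, any points $r_1,\ldots,r_n \in \mathbb{R}$, and any real coefficients $a_1,\ldots,a_n$, I would start from the trivial inequality
$$
0 \leq \mathbb{E}\left[\left(\sum_{i=1}^n a_i (V_{t+hr_i} - V_t)\right)^{2}\right] = \sum_{i,j=1}^n a_i a_j\, \mathbb{E}\bigl[(V_{t+hr_i}-V_t)(V_{t+hr_j}-V_t)\bigr],
$$
valid for all $h>0$ small enough that $t+h r_i \geq 0$ for each $i$. By Assumption \ref{VolatilityConditionGeneral} together with the homogeneity property (\ref{HomoFunction}) of $C_\gamma$, each summand on the right equals $L(t)\,C_\gamma(hr_i,hr_j) + o(h^\gamma) = L(t)\, h^\gamma\, C_\gamma(r_i,r_j) + o(h^\gamma)$ as $h \to 0$, since $(h^2 r_i^2 + h^2 r_j^2)^{\gamma/2} = O(h^\gamma)$ for fixed $r_i,r_j$. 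Dividing by $h^\gamma$ and letting $h\to 0$ yields $L(t)\sum_{i,j} a_i a_j C_\gamma(r_i,r_j) \geq 0$. Because $C_\gamma \not\equiv 0$, there must exist some $t_0$ with $L(t_0)>0$ (otherwise (\ref{SCDefinitionGeneral}) would leave $C_\gamma$ unspecified and trivialize the claim), so evaluating at such $t_0$ gives $\sum_{i,j} a_i a_j C_\gamma(r_i,r_j) \geq 0$; multiplying back by $L(t)\geq 0$ then gives the same finite-point PSD property for $C(\cdot,\cdot;t)=L(t)C_\gamma(\cdot,\cdot)$.

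To lift finite-point PSD to integral PSD, I would approximate the double integral by Riemann sums. For continuous compactly supported $K$, and using the continuity of $C_\gamma$ away from the origin that follows from its homogeneity, a partition $\{x_i\}$ of the support of $K$ with mesh $\Delta_N\to 0$ gives
$$
\iint K(x)K(y)\, C_\gamma(x,y)\, dx\, dy \;=\; \lim_{N\to\infty}\sum_{i,j}\bigl(K(x_i)\Delta_N\bigr)\bigl(K(x_j)\Delta_N\bigr)\, C_\gamma(x_i,x_j) \;\geq\; 0,
$$
where each Riemann sum is non-negative by the previous step applied with $a_i:=K(x_i)\Delta_N$ and $r_i:=x_i$. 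A standard truncation and dominated-convergence argument then extends the inequality to every $K$ for which the double integral is well-defined. The main obstacle is precisely this extension step: homogeneity forces $C_\gamma$ to grow like $|(x,y)|^\gamma$ at infinity and to possess a mild singularity along any exceptional rays through the origin, so one must truncate to a bounded cube and handle the tails via absolute integrability of $K(x)K(y)C_\gamma(x,y)$, which is exactly the hypothesis under which the claim is stated.
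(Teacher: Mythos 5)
Your proof is correct and follows essentially the same route as the paper: expand the non-negative second moment of $\sum_i a_i (V_{t+hr_i}-V_t)$, use the homogeneity of $C_\gamma$ to pull out $h^\gamma$, let $h\to 0$ so the $o(h^\gamma)$ remainder vanishes, and then pass from finite-point non-negative definiteness to the integral version via Riemann sums. Your additional remarks — isolating a $t_0$ with $L(t_0)>0$ to deduce the claim for $C_\gamma(\cdot,\cdot)$ itself, and the truncation/dominated-convergence care in the Riemann-sum limit — only make explicit details the paper leaves implicit.
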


\begin{proof}
To prove the result, we write (\ref{SCDefinitionGeneral}) as
$
\mathbb{E} [ (V_{t+r} - V_{t})(V_{t+s} - V_{t}) ] 
= C_{\gamma}(r,s;t) + D(r,s;t),
$
where $D(r,s;t) = o ((r^2 + s^2)^{\gamma/2})$, as $r,s \rightarrow 0$. We first show that $C_\gamma$ is non-negative definite. Indeed, for $n\in \mathbb{N}$, $(x_1,...,x_n) \in \mathbb{R}^n$,  $(c_1,...,c_n) \in \mathbb{R}^n - \{0\}$ and $h \in \mathbb{R}_+$, we have
\begin{equation*}
\begin{split}
&\quad \sum_{i = 1}^n \sum_{j = 1}^n c_i c_j C_{\gamma} (x_i,x_j;t)
=
h^{-\gamma} \sum_{i = 1}^n \sum_{j = 1}^n c_i c_j C_{\gamma}(hx_i,hx_j;t) \\
& =
h^{-\gamma} \sum_{i = 1}^n \sum_{j = 1}^n c_i c_j \mathbb{E} [ (V_{t+hx_i} - V_{t})(V_{t+hx_j} - V_{t}) ] 
- h^{-\gamma} \sum_{i = 1}^n \sum_{j = 1}^n c_i c_j D(hx_i,hx_j;t) \\
& = h^{-\gamma} \mathbb{E} \left[ \left( \sum_{i = 1}^n c_i (V_{t+hx_i} - V_{t}) \right)^2 \right]
- h^{-\gamma} \sum_{i = 1}^n \sum_{j = 1}^n c_i c_j D(hx_i,hx_j;t).
\end{split}
\end{equation*}
{On the right-hand side of the previous equation,} we let $h \rightarrow 0_+$ and we have that the first term is always non-negative, {while} the second term converges to zero. This shows the non-negative definiteness of $C_{\gamma}$. The integral non-negative definiteness follows then, since the Riemann integration is defined to be the limit of finite sum, which is always non-negative.
\end{proof}

The next result establishes the uniqueness of $\gamma$ and $C_{\gamma}$ in (\ref{SCDefinitionGeneral}).
\begin{proposition}\label{Proposition_Uniqueness_of_C_Gamma}
Under Assumption \ref{VolatilityConditionGeneral}, the $\gamma$ and $C_{\gamma}(r,s;t)$ defined in (\ref{SCDefinitionGeneral}) are unique. This means that $C_{\gamma}(r,s)$ is unique up to a multiple of a positive constant for a given $t$.
\end{proposition}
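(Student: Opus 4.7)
The plan is to derive uniqueness by exploiting the homogeneity relation (\ref{HomoFunction}) together with the asymptotic vanishing of the remainder in (\ref{SCDefinitionGeneral}). Suppose for contradiction that two pairs $(\gamma_{1}, C_{\gamma_{1}})$ and $(\gamma_{2}, C_{\gamma_{2}})$, together with functions $L_{1}$ and $L_{2}$, both represent $\mathbb{E}[(V_{t+r}-V_{t})(V_{t+s}-V_{t})]$ in the sense of (\ref{SCDefinitionGeneral}). Fix a $t$ for which $L_{1}(t), L_{2}(t) \neq 0$ (such a $t$ exists because $C_{\gamma_{i}}$ is not identically zero, so the left-hand side of (\ref{SCDefinitionGeneral}) must be nontrivial). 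Subtracting the two representations,
\begin{equation*}
L_{1}(t)\,C_{\gamma_{1}}(r,s) - L_{2}(t)\,C_{\gamma_{2}}(r,s) \;=\; o\bigl((r^{2}+s^{2})^{\gamma_{1}/2}\bigr) + o\bigl((r^{2}+s^{2})^{\gamma_{2}/2}\bigr),\qquad r,s\to 0.
\end{equation*}

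First I would show $\gamma_{1}=\gamma_{2}$. Assume WLOG $\gamma_{1}\le\gamma_{2}$, and pick $(r_{0},s_{0})$ with $C_{\gamma_{1}}(r_{0},s_{0})\neq 0$ (possible by the non-vanishing assumption). Substitute $(r,s)=(hr_{0},hs_{0})$, apply the homogeneity relation (\ref{HomoFunction}) to each $C_{\gamma_{i}}$, and divide through by $h^{\gamma_{1}}$:
\begin{equation*}
L_{1}(t)\,C_{\gamma_{1}}(r_{0},s_{0}) - L_{2}(t)\,h^{\gamma_{2}-\gamma_{1}}\,C_{\gamma_{2}}(r_{0},s_{0}) \;=\; o(1) + o\bigl(h^{\gamma_{2}-\gamma_{1}}\bigr),\qquad h\to 0^{+}.
\end{equation*}
If $\gamma_{1}<\gamma_{2}$, letting $h\to 0^{+}$ would force $L_{1}(t)C_{\gamma_{1}}(r_{0},s_{0})=0$, a contradiction. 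Hence $\gamma_{1}=\gamma_{2}=:\gamma$.

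With the exponents equal, the same rescaling trick applied to an arbitrary $(r_{0},s_{0})\in\mathbb{R}^{2}$ gives, after dividing by $h^{\gamma}$ and sending $h\to 0^{+}$,
\begin{equation*}
L_{1}(t)\,C_{\gamma_{1}}(r_{0},s_{0}) \;=\; L_{2}(t)\,C_{\gamma_{2}}(r_{0},s_{0}),
\end{equation*}
so the product $C(r,s;t):=L(t)C_{\gamma}(r,s)$ is pointwise determined, which is precisely the uniqueness of $C_{\gamma}(\cdot,\cdot;t)$ for each such $t$. The ambiguity in the decomposition $L(t)\,C_{\gamma}(r,s)$ is only a positive multiplicative constant: indeed, setting $r_{0}=s_{0}$ in (\ref{SCDefinitionGeneral}) yields $C_{\gamma_{i}}(r_{0},r_{0})\ge 0$ (since the left-hand side is a second moment and $L_{i}(t)>0$), and picking $r_{0}$ so that the common value is strictly positive, the ratio $c=L_{2}(t)/L_{1}(t)$ is a positive constant giving $C_{\gamma_{1}}=c\,C_{\gamma_{2}}$.

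The only delicate point is ensuring that the ``nondegenerate'' $t$ and the test point $(r_{0},s_{0})$ actually exist; this is where I would invoke Proposition \ref{Proposition_Nonnegative_Definiteness_of_C_Gamma} to argue that $C_{\gamma}(r,r)\ge 0$ and that the non-vanishing hypothesis on $C_{\gamma}$ propagates from some $(r_{0},s_{0})$ (in fact, by homogeneity, to an open cone of points), so that $L(t)$ cannot be degenerate wherever the covariance actually measures something nontrivial. Once this bookkeeping is in place, the rescaling argument is essentially routine.
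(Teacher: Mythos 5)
Your proof is correct and follows essentially the same route as the paper's: substitute $(hr_{0},hs_{0})$, invoke the homogeneity of $C_{\gamma}$, divide by the smaller power of $h$, and let $h\to 0^{+}$ to force first $\gamma_{1}=\gamma_{2}$ and then pointwise equality of $L(t)C_{\gamma}(r,s)$. The additional bookkeeping you supply (nondegeneracy of $t$, positivity of $C_{\gamma}(r_{0},r_{0})$ via non-negative definiteness) is sound and slightly more careful than the paper's version, but it does not change the argument.
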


\begin{proof}
First we prove the uniqueness of $\gamma$. Suppose there are $\gamma, \gamma'$ such that $\gamma' > \gamma > 0$, and correspondingly, $C_{\gamma}$ and $C'_{\gamma'}$, that satisfies (\ref{SCDefinitionGeneral}). Since $C_{\gamma}$ is non-zero, there exists $r,s \in \mathbb{R}$, such that $C_{\gamma} (r,s;t) \neq 0$. Then,
\begin{equation*}
\begin{split}
\mathbb{E} [ (V_{t+rh} - V_{t})(V_{t+sh} - V_{t}) ] 
& = 
h^\gamma C_{\gamma}(r,s;t) + o(h^{\gamma} (r^2 + s^2)^{\gamma / 2}) \\
& =
h^{\gamma'} C'_{\gamma'}(r,s;t) + o(h^{\gamma'} (r^2 + s^2)^{\gamma / 2}), \quad h\rightarrow 0 .
\end{split}
\end{equation*}
Note now that in the right two parts, all the terms are $o(h^{\gamma})$ except $h^{\gamma} C_{\gamma}(r,s;t)$. Since we have assumed that $C_{\gamma}(r,s;t) \neq 0$, this is impossible. Therefore, $\gamma = \gamma'$ and, thus, $\gamma$ must be unique. Now with the same $\gamma$, suppose at some $r,s$, we have $C_\gamma(r,s) \neq C_\gamma'(r,s)$. Then, a similar argument shows that this leads to a contradiction. This proves the uniqueness of $\gamma$ and $C_\gamma$.
\end{proof}

{It is worth} mentioning that we are assuming a fixed $C_{\gamma}(r,s)$, for any $t \in (0,T)$. Intuitively, this means that the covariance structure does not change over time. For example, we are not considering the case {where} the volatility is BM type in $[0,T_0]$ and is deterministic and smooth in $[T_0,T]$.
{We shall see in the next section that most} volatility processes that are studied in {the} Mathematical Finance literature satisfy Assumption \ref{VolatilityConditionGeneral} with {a function $C_\gamma$ of the form:}
\begin{equation}\label{fBM_Vol_Cov_Structure}
C_\gamma(r,s)
=
\frac{1}{2} (|r|^{\gamma} + |s|^{\gamma} - |r-s|^{\gamma}) ,
\end{equation}
{for some $\gamma \in [1,2]$. The case of $\gamma = 1$ covers volatility} processes driven by BM, {while $\gamma \in (1,2)$ corresponds to volatility processes driven by fractional Brownian Motions (fBM) with Hurst parameter $H>1/2$. Deterministic and sufficiently smooth volatility processes can also be incorporated by taking $\gamma = 2$}. 

{Let us note} that although {most of the volatility models considered in the literature are covered by} (\ref{fBM_Vol_Cov_Structure}), mathematically one can consider more general models as long as Assumption \ref{VolatilityConditionGeneral} is satisfied.  {For instance,} we will see in the next section that for a suitable Gaussian processes {$\{Z_t\}_{t\geq{}0}$} and a smooth function $f : \mathbb{R} \to \mathbb{R}$, $V_{t}:=f(Z_t)$ satisfies Assumption \ref{VolatilityConditionGeneral}. {Furthermore,} for any valid non-negative definite symmetric function $C_\gamma$ that is homogeneous to order $\gamma$, one can define a zero-mean continuous Gaussian process {$\{Z_t\}_{t\geq{}0}$} such that $\mathbb{E}[{Z_tZ_s}] = C_\gamma(t,s)$. In such a case, if we define $V_t = \sigma_t^2$ as a stochastic integral {with respect to} $\{{Z_t}\}_{t \geq 0}$, then generally $\{V_t\}_{t\geq{}0}$ {would satisfy} Assumption \ref{VolatilityConditionGeneral}.

To close this part, we briefly summarize the advantages of our key Assumption \ref{VolatilityConditionGeneral}:
\begin{itemize}
\item
The assumption is natural since the spirit of kernel estimator is to {focus} on data points closed to the estimated point. Therefore, the convergence of such an estimator should be determined by {a local} property of the volatility process near the estimated point.
\item
The assumption enables us to consider the randomness of log price process and volatility process simultaneously. Although we assume independence of $(\mu,\sigma)$ and $B$, the randomness of $\sigma$ does create some subtleties. Also this makes it possible for future work to incorporate leverage effect of the volatility process.
\item
The assumption provides us the possibility of obtaining an explicit asymptotic characterization of the Mean Squared Error (approximated to the first order) of the kernel estimator, so that {we will then be able to setup a well-posed optimal selection problem for the} bandwidth and kernel function.
\end{itemize}

\subsection{Common Volatility Processes}\label{Common_Volatility_Processes_Section}

In this section, we demonstrate that common volatility processes satisfy the Assumption \ref{VolatilityConditionGeneral}. There are {four} fundamental cases that we would like to investigate. The simplest case is when the volatility process is deterministic and is differentiable. The second case is the solutions of {a classical} {Stochastic Differential Equation (SDE)} driven by BM. {A prototypical example of this case is the Heston model \cite{Heston_Model}}. The third case is the solution of {a} SDE driven by fBM. As a fundamental example of this case, we prove that a fractional Ornstein Uhlenbeck (fOU) process satisfies Assumption \ref{VolatilityConditionGeneral}. {Finally, we consider a volatility that is a smooth function of a Gaussian process satisfying the Assumption \ref{VolatilityConditionGeneral}. This covers a wide range of different processes of fractional order and with different {distribution laws}.}

\subsubsection{Deterministic Volatility Process}\label{SubSect:Deterministic}
{This is the simplest type of volatility process}, but still worth mentioning since it demonstrates the generality of Assumption \ref{VolatilityConditionGeneral}. {The proof {of the following result} is standard and is omitted for the sake of brevity.}

\begin{proposition}\label{prop:DtmVol}
Suppose the squared volatility process is given by a deterministic function {$f(t) = \sigma_t^2$, $0\leq t \leq T$,} such that, {for some $m\geq{}1$, $f$ is $m^{th}$-times differentiable at $\tau\in(0,T)$}, $f^{(i)}(\tau) = 0$, for $1 \leq i < m$, and $f^{(m)}(\tau) \neq 0$. Then, {$f$ satisfies} Assumption \ref{VolatilityConditionGeneral} {with} $\gamma = 2m$ and {$C_{2m}(r,s) := r^ms^m$}.
\end{proposition}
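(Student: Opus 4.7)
The plan is to reduce the statement to a routine application of Taylor's theorem, since the absence of randomness kills the expectation operator: writing $V_t = f(t)$, Assumption \ref{VolatilityConditionGeneral} becomes the deterministic identity
\[
(f(\tau+r) - f(\tau))(f(\tau+s) - f(\tau)) = L(\tau)\, C_{2m}(r,s) + o\bigl((r^2+s^2)^m\bigr), \quad r,s\to 0.
\]
First I would invoke the pointwise $m^{\text{th}}$-order Taylor expansion of $f$ at $\tau$, which is valid under the sole assumption that $f$ is $m$-times differentiable at $\tau$: $f(\tau+h) - f(\tau) = \sum_{i=1}^{m} \frac{f^{(i)}(\tau)}{i!}h^i + o(h^m)$ as $h\to 0$. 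Using $f^{(i)}(\tau) = 0$ for $1 \leq i < m$, this collapses to
\[
f(\tau+h) - f(\tau) = \frac{f^{(m)}(\tau)}{m!}\, h^m + \varepsilon(h)\, h^m, \qquad \varepsilon(h)\to 0 \text{ as } h\to 0.
\]

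Next I would substitute this expansion into both factors on the left-hand side and multiply out. The leading contribution is $\frac{[f^{(m)}(\tau)]^{2}}{(m!)^2}\, r^m s^m$, which identifies
\[
L(\tau) = \frac{[f^{(m)}(\tau)]^2}{(m!)^2}, \qquad C_{2m}(r,s) = r^m s^m,
\]
with $L(\tau)\neq 0$ by the hypothesis $f^{(m)}(\tau)\neq 0$. The homogeneity condition (\ref{HomoFunction}) is immediate since $C_{2m}(hr,hs) = h^{2m} r^m s^m = h^{2m} C_{2m}(r,s)$, so $\gamma = 2m$ is the correct exponent.

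The only thing requiring some small care is the remainder estimate. The cross and quadratic error terms have the form $C\,\varepsilon(r)\, r^m s^m$, $C\,\varepsilon(s)\, r^m s^m$, and $\varepsilon(r)\varepsilon(s)\, r^m s^m$. Using the elementary bound $|r^m s^m| \leq \bigl(\tfrac{r^2+s^2}{2}\bigr)^m \leq (r^2+s^2)^m$ (AM-GM), together with the fact that $\max(|\varepsilon(r)|,|\varepsilon(s)|)\to 0$ whenever $r^2+s^2\to 0$, all three error terms are bounded by $o\bigl((r^2+s^2)^m\bigr)$, which matches the required tolerance $o((r^2+s^2)^{\gamma/2})$ with $\gamma = 2m$. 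This closes the verification.

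There is no real obstacle here; the only subtlety is ensuring that the joint limit $(r,s)\to(0,0)$ is handled correctly rather than iterated limits, and this is taken care of by passing everything through $\delta := \sqrt{r^2+s^2}$ so that $|r|,|s|\leq\delta$ and both $\varepsilon(r)$ and $\varepsilon(s)$ tend to zero with $\delta$.
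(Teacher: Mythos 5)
Your proof is correct and is precisely the standard Taylor-expansion argument that the paper declares it omits "for the sake of brevity": Peano-form expansion at $\tau$, identification of $L(\tau)=[f^{(m)}(\tau)]^2/(m!)^2$ and $C_{2m}(r,s)=r^ms^m$, and the AM--GM bound $|r|^m|s|^m\leq (r^2+s^2)^m$ to absorb the remainder into $o((r^2+s^2)^m)$. The care you take with the joint limit $(r,s)\to(0,0)$ versus iterated limits is exactly the right (and only) subtlety.
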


\subsubsection{Brownian Motion Case}
We {next} consider the solutions {of a standard} SDE driven by BM. This is one of {the most popular approaches} to generalize the Black-Scholes-Merton model to non-constant volatility {and is widely used} in practice.
{The following is our main result, whose proof is deferred to the Appendix \ref{Sect:TecProof}.}

\begin{proposition}\label{BMProcesses}
Consider {a} complete filtered probability space $(\Omega, \mathscr{F}, \mathbb{F}=\{\mathscr{F}_t\}_{t\geq 0} , \mathbb{P})$ and an It\^{o} process $V_t = \sigma^2(t,\omega)$ that satisfies {the SDE}
\begin{equation}\label{eq:Ito_process}
dV_t = f(t,\omega) dt + g(t,\omega)dW_t, \quad t\in [0,T],
\end{equation}
where {$\{W_t\}_{t\geq{}0}$} is a standard Wiener process {adapted to} $\mathbb{F}$. Assume that {$f(t,\omega)$ and $g(t,\omega)$} are adapted and progressively measurable {with respect to} $\mathbb{F}$, 
$\mathbb{E} \left[ f^2(t,\omega) \right] < M$, for $t \in [0,T]${, and} $\mathbb{E} \left[ g^2(t,\omega) \right]$ is continuous for $t \in [0,T]$. Then, Assumption \ref{VolatilityConditionGeneral} is satisfied with $\gamma = 1$, {$C_1(r,s) = \min\{|r|, |s|\}1_{\{rs\geq 0\}}$, and $L(t) = \mathbb{E}[g^2(t,\omega)]$. Furthermore, {$C_{1}(r,s)$} is an integrable positive definite function; i.e., we have strict inequality in (\ref{Eq:NPDDfn}) for all $K: \mathbb{R} \rightarrow \mathbb{R}$ such that $\int |K(x)| dx > 0$.}
\end{proposition}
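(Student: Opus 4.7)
The plan is to compute $\mathbb{E}[(V_{t+r}-V_t)(V_{t+s}-V_t)]$ directly from the It\^o decomposition of $V$ and match the leading order with $L(t)C_1(r,s)$. For $r>0$ write
\[
V_{t+r}-V_t = \int_t^{t+r} f(u,\omega)\,du + \int_t^{t+r} g(u,\omega)\,dW_u =: F_t^{\,r} + G_t^{\,r},
\]
and for $r<0$ define $F_t^{\,r}:=-\int_{t+r}^{t}f\,du$ and $G_t^{\,r}:=-\int_{t+r}^{t}g\,dW_u$ so the same decomposition holds. Expanding the product gives four terms: $F_t^{\,r}F_t^{\,s}$, $F_t^{\,r}G_t^{\,s}$, $G_t^{\,r}F_t^{\,s}$, and $G_t^{\,r}G_t^{\,s}$.

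First I would dispatch the three ``drift-involving'' terms. Using $\mathbb{E}[f^2]\leq M$, a Cauchy--Schwarz bound gives $|\mathbb{E}[F_t^{\,r}F_t^{\,s}]|\leq M|r||s|=o((r^2+s^2)^{1/2})$. For the mixed terms, Cauchy--Schwarz together with It\^o isometry yields $|\mathbb{E}[F_t^{\,r}G_t^{\,s}]|\leq (\mathbb{E}[(F_t^{\,r})^2]\mathbb{E}[(G_t^{\,s})^2])^{1/2}\leq C|r|\,|s|^{1/2}=o((r^2+s^2)^{1/2})$, and similarly for the symmetric term. The key term is $\mathbb{E}[G_t^{\,r}G_t^{\,s}]$. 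When $r,s$ have the same sign, say both positive with $r\leq s$, It\^o isometry gives
\[
\mathbb{E}[G_t^{\,r}G_t^{\,s}] = \mathbb{E}\!\left[\int_t^{t+r} g^2(u,\omega)\,du\right] = \int_t^{t+r}\mathbb{E}[g^2(u,\omega)]\,du = L(t)\,r + o(r),
\]
by continuity of $u\mapsto \mathbb{E}[g^2(u,\omega)]$ at $t$. The analogous computation for $r,s<0$ gives $L(t)\min(|r|,|s|)+o(\min(|r|,|s|))$. When $r,s$ have opposite signs, $G_t^{\,r}$ and $G_t^{\,s}$ are It\^o integrals over disjoint intervals and their expected product vanishes. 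Summing the four contributions produces $L(t)\min(|r|,|s|)\mathbf{1}_{\{rs\geq 0\}}+o((r^2+s^2)^{1/2})$, establishing (\ref{SCDefinitionGeneral}) with the stated $\gamma$, $C_1$, and $L$.

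For the strict integral positive definiteness of $C_1$, I would use the elementary identity
\[
C_1(r,s) = \min(r^+,s^+)+\min(r^-,s^-) = \int_0^\infty \bigl[\mathbf{1}_{\{r\geq u\}}\mathbf{1}_{\{s\geq u\}}+\mathbf{1}_{\{r\leq -u\}}\mathbf{1}_{\{s\leq -u\}}\bigr]\,du,
\]
where $x^\pm$ denote positive/negative parts, which one verifies by checking the four sign regions of $(r,s)$. Applying Fubini to any $K$ for which the double integral is defined gives
\[
\iint K(x)K(y)C_1(x,y)\,dx\,dy = \int_0^\infty\!\left[\Bigl(\int_u^\infty K(x)\,dx\Bigr)^{\!2}+\Bigl(\int_{-\infty}^{-u}K(x)\,dx\Bigr)^{\!2}\right]du\geq 0,
\]
which yields non-negative definiteness at once. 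If the left-hand side vanishes then both tail integrals $u\mapsto \int_u^\infty K$ and $u\mapsto \int_{-\infty}^{-u}K$ are zero for a.e.~$u>0$; since they are absolutely continuous in $u$, they vanish identically, and differentiating forces $K=0$ a.e.~on $\mathbb{R}\setminus\{0\}$, contradicting $\int|K|>0$. This gives the strict inequality.

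The main technical obstacle is the mixed-sign case for $r$ and $s$, where one must (i) set up the signed shorthand $F_t^{\,r},G_t^{\,r}$ consistently and (ii) argue orthogonality of the It\^o integrals on disjoint intervals without appealing to the full martingale on $[t-|r|,t+|s|]$; the cleanest route is to note that both $G_t^{\,r}$ and $G_t^{\,s}$ are themselves It\^o integrals with respect to $W$ on disjoint intervals, so their $L^2$-inner product under $\mathbb{E}$ is zero by It\^o isometry applied to the sum. Everything else is routine, with the continuity of $\mathbb{E}[g^2(\cdot,\omega)]$ at $t$ doing the work of extracting the leading constant $L(t)$.
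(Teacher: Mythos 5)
Your proof is correct, and it uses the same basic ingredients as the paper's (Cauchy--Schwarz to dispose of the drift contributions, the It\^o isometry to extract the martingale contribution, and continuity of $\mathbb{E}[g^2(\cdot,\omega)]$ to identify $L(t)$), but the covariance computation is organized differently. The paper never polarizes: it computes only the single-increment second moment $\mathbb{E}[(V_{t+h}-V_t)^2]=L(t)h+o(h)$ together with a bound $|\mathbb{E}[(V_{t+r}-V_t)(V_t-V_{t-s})]|\leq A\,r\sqrt{s}$ for disjoint increments, and then recovers the general same-sign case by telescoping $V_{t+r}-V_t=(V_{t+r}-V_{t+s})+(V_{t+s}-V_t)$, the opposite-sign case following from the disjoint-increment bound alone. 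You instead expand $(F_t^{\,r}+G_t^{\,r})(F_t^{\,s}+G_t^{\,s})$ term by term and apply the polarized It\^o isometry directly to $\mathbb{E}[G_t^{\,r}G_t^{\,s}]$, which treats all sign configurations uniformly (the overlap of the two integration intervals is exactly $\min(|r|,|s|)\,1_{\{rs\geq 0\}}$) and avoids the telescoping; the price is that you must control the mixed drift/martingale cross terms $\mathbb{E}[F_t^{\,r}G_t^{\,s}]$ for possibly overlapping increments, which your bound $C|r|\,|s|^{1/2}=o((r^2+s^2)^{1/2})$ handles correctly. For the strict positive definiteness your argument is essentially the paper's --- both write $C_1$ as an integral of products of indicators and reduce (\ref{Eq:NPDDfn}) to a sum of squared tail integrals of $K$ --- except that you make explicit the final step (vanishing of both tail functions forces $K=0$ a.e., contradicting $\int|K|>0$) that the paper leaves implicit. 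One small point to state when writing it up: the polarized isometry requires $\mathbb{E}\int_0^T g^2(u,\omega)\,du<\infty$, which holds here because $\mathbb{E}[g^2(\cdot,\omega)]$ is continuous on the compact interval $[0,T]$.
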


\begin{example}[\textbf{Heston Model}]\label{HestonProcess}
Consider the following Heston model, which was studied in \cite{Heston_Model}:
\begin{equation}\label{HestonTransformed}
\begin{split}
dX_t = & \mu_t dt + \sqrt{V_t} dB_t , \\
dV_t = & \kappa(\theta-V_t)dt + \xi \sqrt{V_t} dW_t ,
\end{split}
\end{equation}
where parameters are restricted to $2\kappa \theta > \xi^2$, {so that} $V_t$ is always positive. {This is one of the most widely used stochastic volatility models in Finance. The volatility process appearing above follows the so-called CIR model, which was introduced in \cite{CIR_Model}. As {an immediate consequence} of {Proposition} \ref{BMProcesses}, we deduce that the Heston model satisfies Assumption \ref{VolatilityConditionGeneral}} with $\gamma = 1$, a positive definite $C_1(r,s) = \min\{|r|, |s|\}1_{\{rs\geq 0\}}${,} and $L(t) = \mathbb{E}[g^2(t,\omega)]$.
\end{example}

\subsubsection{Fractional Brownian Motion Case}
We {now proceed to study a volatility {process} driven by a fBM with Hurst parameter $H>1/2$}. 
{Recall that a} stochastic process ${B^{(H)}}=\{B_t^{{(H)}}: t\in {\mathbb{R}}\}$ is called a {(two-sided)} fractional Brownian Motion with Hurst parameter $H \in (0,1)$ if {this is a zero-mean Gaussian process with covariance function}
$$
{\mathbb{E}\left[B_t^{{(H)}} B_s^{{(H)}}\right] = \frac{1}{2}\left({|s|^{2H} + |t|^{2H} - |s-t|^{2H}}\right), \quad {s,t\in\mathbb{R}}}.
$$
{In particular, when $H=\frac{1}{2}$,} we have $\mathbb{E}\left[B_t^{{(H)}} B_s^{{(H)}}\right] = \min \{ s,t \}$ for $s,t > 0$, {and, thus, $\{B_t^{(1/2)}\}_{t\geq{}0}$} is the standard BM. {We refer the reader to \cite{SamoTaqqu} for a {detailed} survey of fBM.}

{An important property of fBM, that is relevant to our problem, is self similarity. Concretely, $B^{(H)}$ is such that, {for any $r>0$,} the process $\{r^{H}B_t^{{(H)}}\}_{t\in\mathbb{R}}$ has the same finite-dimensional distributions as $\{B_{rt}^{{(H)}}\}_{t\in\mathbb{R}}$}, because the covariance function is homogeneous of order $2H$. This gives us some intuition {as to} why Assumption \ref{VolatilityConditionGeneral} holds. {The hurst parameter $H$} characterizes several important properties of fBM. For example, {for $H \in (\frac{1}{2},1)$, the process {exhibits} the so-called long-range dependence property, which broadly states that the {autocorrelation} of the increments of the process, $\{B_{k}^{(H)}-B_{k-1}^{(H)}\}_{k\geq{}1}$, {vanishes rather slowly so that the following holds:}
$$\sum_{n=1}^{\infty} |\mathbb{E}[(B_{k}^{{(H)}} - B_{k-1}^{{(H)}})(B_{k+n}^{{(H)}} - B_{k+n-1}^{{(H)}})]|= \infty .$$
Some empirical studies (see, e.g., \cite{Volatility_has_Long_Memory}) have suggested that {the volatility in real markets exhibits some type of long-memory and, due to this, we focus on the case $H \in (\frac{1}{2},1)$.}

{In what follows, we show that some processes defined as integrals with respect to fBM satisfy Assumption \ref{VolatilityConditionGeneral}. It is worth mentioning that, when} $H\neq1/2$, the fBM} is not a semimartingale {and the problem of defining  the stochastic integral with respect to fBM is more subtle. There are several approaches to this problem}. In our paper, we only focus on integrals of deterministic functions $f$ for which the integral can be defined on a path-wise sense under the following condition (cf. \cite{SamoTaqqu}):
\begin{align}\label{NCFEI}
 	\int_{-\infty}^{\infty} \int_{-\infty}^{\infty} {|f(u)f(v)|}|u-v|^{2H-2} dudv < \infty.
\end{align}
{Since there is no guarantees that the stochastic integral of $f$ with respect to fBM is nonnegative, which is a requirement of  a volatility process, we also consider the exponential of such a process. 
This is our result, whose proof is deferred to the Appendix \ref{Sect:TecProof}.} 
\begin{proposition}\label{stationaryfBM}
Consider a filtered probability space $(\Omega, \mathscr{F}, \mathbb{F}=\{\mathscr{F}_t\}_{t\geq 0} , P)$ and a process {$Y^{(H)}=\{Y_t^{{(H)}}\}_{t\geq{}0}$} that satisfies
$$Y_t^{{(H)}} = \int_{-\infty}^t f(u) dB_u^{{(H)}},\quad {t\geq{}0},$$
where $\{B_t^{{(H)}}\}_{{t\in\mathbb{R}}}$ is a {(two-sided)} fBM with Hurst parameter $H \in (\frac{1}{2},1)$ and $f(\cdot)$ is a continuous function that {satisfies (\ref{NCFEI}).
Then, the processes} $Y^{(H)}$ and $\{\exp(Y_t^{{(H)}})\}_{t\geq{}0}$ satisfy Assumption \ref{VolatilityConditionGeneral} with $\gamma = 2H \in (1,2)$ and $C_\gamma$ given by (\ref{fBM_Vol_Cov_Structure}).
\end{proposition}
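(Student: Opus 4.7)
The plan is to first establish the result for the centered Gaussian process $Y^{(H)}$ directly from the Wiener-integral isometry against fBM, and then bootstrap to $Z_t := \exp(Y^{(H)}_t)$ using log-normal moment generating functions. Throughout, assume $r, s > 0$; the other sign configurations follow by symmetry and sign-tracking.

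For $Y^{(H)}$, the key tool is the isometry valid for $H > 1/2$ and deterministic $g,h$ satisfying (\ref{NCFEI}):
\[
\mathbb{E}\!\left[\int g\,dB^{(H)}\int h\,dB^{(H)}\right] = H(2H-1)\iint g(u)h(v)|u-v|^{2H-2}\,du\,dv.
\]
Applied to $g = f\mathbf{1}_{[t,t+r]}$ and $h = f\mathbf{1}_{[t,t+s]}$, the covariance of increments equals $H(2H-1)\int_t^{t+r}\!\int_t^{t+s} f(u)f(v)|u-v|^{2H-2}du\,dv$. Splitting $f(u)f(v) = f(t)^2 + [f(u)f(v) - f(t)^2]$ and invoking the classical identity
\[
H(2H-1)\int_0^r\!\int_0^s |u-v|^{2H-2}\,du\,dv = \tfrac{1}{2}\bigl(|r|^{2H}+|s|^{2H}-|r-s|^{2H}\bigr) = C_{2H}(r,s),
\]
(which is merely the covariance of standard fBM increments $B^{(H)}_{t+r}-B^{(H)}_t$ and $B^{(H)}_{t+s}-B^{(H)}_t$), the leading term is $f(t)^2\, C_{2H}(r,s)$, and continuity of $f$ at $t$ forces the remainder to be $o(C_{2H}(r,s)) = o((r^2+s^2)^H)$. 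Hence Assumption \ref{VolatilityConditionGeneral} holds for $Y^{(H)}$ with $L(t)=f(t)^2$.

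For $Z_t = \exp(Y^{(H)}_t)$, decompose
\[
\mathbb{E}[(Z_{t+r}-Z_t)(Z_{t+s}-Z_t)] = e^{G(t+r,t+s)} - e^{G(t+r,t)} - e^{G(t,t+s)} + e^{G(t,t)},
\]
where $G(a,b) := \tfrac{1}{2}\mathrm{Var}(Y^{(H)}_a) + \tfrac{1}{2}\mathrm{Var}(Y^{(H)}_b) + \mathrm{Cov}(Y^{(H)}_a, Y^{(H)}_b)$ by the log-normal moment generating function. Factoring out $e^{G(t,t)} = e^{2\mathrm{Var}(Y^{(H)}_t)}$ and Taylor-expanding, the crucial observation is that the mixed second difference of $G$ telescopes to
\[
G(t+r,t+s) - G(t+r,t) - G(t,t+s) + G(t,t) = \mathbb{E}[\Delta_r Y\,\Delta_s Y] = f(t)^2\, C_{2H}(r,s) + o\bigl((r^2+s^2)^H\bigr),
\]
where $\Delta_r Y := Y^{(H)}_{t+r} - Y^{(H)}_t$, by the first part. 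All remaining terms in the Taylor expansion arise from products of the single differences $G(t+r,t)-G(t,t) = O(r)$ and $G(t,t+s)-G(t,t) = O(s)$, yielding contributions of order $O(rs)$, $O((r+s)(r^2+s^2)^H)$, and $O((r^2+s^2)^{2H})$. This establishes Assumption \ref{VolatilityConditionGeneral} for $\exp(Y^{(H)})$ with $L(t) = e^{2\mathrm{Var}(Y^{(H)}_t)}\, f(t)^2$.

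The main obstacle is verifying that all subleading corrections in the exponential-case Taylor expansion are genuinely $o((r^2+s^2)^H)$. The critical term is the $O(rs)$ cross-product: since $rs \leq \tfrac{1}{2}(r^2+s^2)$, one has $rs/(r^2+s^2)^H \leq \tfrac{1}{2}(r^2+s^2)^{1-H} \to 0$ as $r,s \to 0$, which uses crucially the constraint $H < 1$. The $O(r)$ and $O(s)$ bounds on the single differences of $G$ rely on differentiability in $t$ of $\mathrm{Var}(Y^{(H)}_t)$ and $\mathrm{Cov}(Y^{(H)}_{t+r}, Y^{(H)}_t)$, which in turn follow from the integrability condition (\ref{NCFEI}) by differentiation under the integral sign.
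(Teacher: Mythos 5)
Your proof is correct. The first half (the process $Y^{(H)}$ itself) is essentially the paper's argument: the same Pipiras--Taqqu isometry, the same reduction of the kernel integral to $C_{2H}(r,s)$, and the same use of continuity of $f$ to localize (the paper phrases it as an $\varepsilon$-sandwich of $f(u)f(v)$ rather than an additive split, but the content is identical). Where you genuinely diverge is the treatment of $\exp(Y^{(H)})$. The paper invokes its general Proposition \ref{proposition:C_gamma_of_function_of_gaussian_process} on $C^2$ functions of Gaussian processes, which runs a second-order Taylor expansion with a stochastic remainder and a conditional-Gaussian decomposition, and therefore needs the fourth-moment hypotheses of that proposition, including $\mathbb{E}[\sup_t (f''(Z_t))^4]<\infty$. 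You instead exploit log-normality to get an exact closed form $e^{G(a,b)}$ for $\mathbb{E}[Z_aZ_b]$, and your expansion is really the exact identity $e^{b+c+\kappa}-e^b-e^c+1=(e^b-1)(e^c-1)+e^{b+c}(e^\kappa-1)$ with $\kappa=\mathrm{Cov}(\Delta_rY,\Delta_sY)$, which cleanly isolates the leading term $e^{2\mathrm{Var}(Y_t)}\kappa$ and reduces everything to the single decorrelation estimate $\mathrm{Cov}(Y_t,\,Y_{t+r}-Y_t)=O(|r|)$; your observation that the $O(rs)$ cross term dies precisely because $H<1$ is the right crux. Your route is more elementary and avoids the supremum-moment condition entirely, at the cost of being special to the exponential, whereas the paper's route generalizes to arbitrary smooth transforms. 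One shared caveat: the $O(|r|)$ decorrelation bound is exactly condition (a) of Proposition \ref{proposition:C_gamma_of_function_of_gaussian_process}, which the paper's proof also relies on but never verifies for $Y^{(H)}$; deriving it from (\ref{NCFEI}) alone requires $\int_{-\infty}^t|f(u)|(t-u)^{2H-2}du<\infty$, which you should state explicitly (it is immediate for the fOU example but not an automatic consequence of the double-integral condition). This is a minor tightening, not a flaw specific to your argument.
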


As a {prototypical example, the fractional Ornstein-Uhlenbeck process (fOU)} (cf. \cite{fOU_Cheridito}), which is frequently used to model volatility processes, satisfies Assumption \ref{VolatilityConditionGeneral}. The fOU process, with Hurst parameter $H \in (\frac{1}{2},1)$, is defined {as the solution of the following SDE}, 
\begin{equation}\label{eq:fOU_SDE}
\begin{split}
dY_t^{{(H)}} = - \lambda Y_t dt + \sigma d B_t^{{(H)}}.
\end{split}
\end{equation}
{It is known that the previous SDE admits} the stationary solution:
\begin{equation}\label{eq:fOU_solution}
\begin{split}
Y_t^{{(H)}} = \sigma \int_{-\infty}^t e^{-\lambda(t-u)}dB_u^{{(H)}},\quad t\geq 0.
\end{split}
\end{equation}
{We have the following result (see Appendix \ref{Sect:TecProof} for a proof)}.
\begin{lemma}\label{stationaryfOU}
Let $\{Y_t^{{(H)}}\}_{t\geq{}0}$ be the fractional Ornstein-Uhlenbeck process defined by \eqref{eq:fOU_solution}, with Hurst parameter $H \in (1/2,1)$. Then, {the processes $\{Y_t^{{(H)}}\}_{t\geq{}0}$ and $\{\exp(Y_t^{{(H)}})\}_{t\geq{}0}$} satisfy Assumption \ref{VolatilityConditionGeneral} with $\gamma = 2H \in (1,2)$ and $C_\gamma$ given by (\ref{fBM_Vol_Cov_Structure}).
\end{lemma}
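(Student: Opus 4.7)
The plan is to exploit the stationarity of the fOU process and reduce the problem to the local behavior of its covariance function $R(h) := \mathbb{E}[Y_{0}^{(H)} Y_{h}^{(H)}]$. Since $\{Y_{t}^{(H)}\}$ is a centered stationary Gaussian process, a direct expansion gives
\begin{equation*}
\mathbb{E}\left[(Y_{t+r}^{(H)} - Y_{t}^{(H)})(Y_{t+s}^{(H)} - Y_{t}^{(H)})\right] = [R(0)-R(r)] + [R(0)-R(s)] - [R(0)-R(r-s)],
\end{equation*}
so Assumption \ref{VolatilityConditionGeneral} reduces to the scalar expansion $R(0) - R(h) = \tfrac{1}{2}\sigma^{2}|h|^{2H} + O(h^{2})$ as $h \to 0$ (call it $(\star)$). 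Indeed, substituting $(\star)$ above produces $\sigma^{2} C_{2H}(r,s)$ plus an $O(r^{2}+s^{2}+|r-s|^{2})$ remainder that is $o((r^{2}+s^{2})^{H})$ because $H<1$, which matches (\ref{SCDefinitionGeneral}) with $\gamma = 2H$, $C_\gamma$ from (\ref{fBM_Vol_Cov_Structure}), and constant $L(t) = \sigma^{2}$.

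To prove $(\star)$, I would start from the decomposition
\begin{equation*}
Y_{t+h}^{(H)} - Y_{t}^{(H)} = (e^{-\lambda h}-1)\,Y_{t}^{(H)} + \sigma(B_{t+h}^{(H)}-B_{t}^{(H)}) + \sigma\int_{t}^{t+h}(e^{-\lambda(t+h-u)}-1)\,dB_{u}^{(H)},
\end{equation*}
and compute $\mathbb{E}[(Y_{t+h}^{(H)}-Y_{t}^{(H)})^{2}]$ by expanding the square. The self-variance of the middle term equals $\sigma^{2}|h|^{2H}$ and produces the leading order in $(\star)$. The first piece has variance $O(h^{2})$ since $Y_{t}^{(H)}$ has bounded variance $R(0)$, and the third piece has variance $O(h^{2H+2})$ by the fBM isometry $\mathbb{E}[\int f\,dB^{(H)}\int g\,dB^{(H)}] = H(2H-1)\iint f(u)g(v)|u-v|^{2H-2}\,du\,dv$ combined with $|e^{-\lambda(t+h-u)}-1|\leq \lambda h$ on $[t,t+h]$. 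Among the cross terms, the only potentially large one is between the first two pieces, namely $2\sigma(e^{-\lambda h}-1)\,\mathbb{E}[Y_{t}^{(H)}(B_{t+h}^{(H)}-B_{t}^{(H)})]$. Using the isometry and splitting the $u$-domain at $u = t-h$ to handle the singular kernel $|u-v|^{2H-2}$, the cross-expectation is shown to be $O(h)$, so this contribution is $O(h^{2})$. All remaining cross terms are $O(h^{H+2})$ or $O(h^{2H+1})$, both $o(h^{2H})$ for $H<1$, yielding $(\star)$.

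For $V_{t} = \exp(Y_{t}^{(H)})$, I would leverage the joint Gaussianity of $(Y_{t+r}^{(H)},Y_{t}^{(H)},Y_{t+s}^{(H)})$ and the moment-generating identity $\mathbb{E}[\exp(Y_{a}^{(H)}+Y_{b}^{(H)})] = \exp(R(0)+R(|a-b|))$ to obtain
\begin{equation*}
\mathbb{E}[(V_{t+r}-V_{t})(V_{t+s}-V_{t})] = e^{R(0)}\left[e^{R(r-s)} + e^{R(0)} - e^{R(r)} - e^{R(s)}\right].
\end{equation*}
A Taylor expansion of $x \mapsto e^{x}$ around $R(0)$, together with $(\star)$, then gives the leading term $e^{2R(0)}\sigma^{2} C_{2H}(r,s)$ plus remainders of orders $O(r^{2}+s^{2})$ (from the $O(h^{2})$ in $(\star)$) and $O((r^{2}+s^{2})^{2H})$ (from the quadratic Taylor error), both $o((r^{2}+s^{2})^{H})$ since $H<1$ and $2H>H$. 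Hence Assumption \ref{VolatilityConditionGeneral} holds with constant $L(t) = e^{2R(0)}\sigma^{2}$. The principal technical obstacle throughout is the $O(h^{2})$ control in $(\star)$: the long-range dependence of fBM for $H>1/2$ prevents $Y_{t}^{(H)}$ from being independent of the future fBM increment, forcing a direct and careful analysis of the singular kernel $|u-v|^{2H-2}$ near the diagonal $u=t$.
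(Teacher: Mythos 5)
Your proposal is correct, but it follows a genuinely different route from the paper's. The paper works directly with the bivariate quantity $\mathbb{E}[(Y^{(H)}_{t+r}-Y^{(H)}_t)(Y^{(H)}_{t+s}-Y^{(H)}_t)]$: it splits each increment into $(e^{-\lambda r}-1)e^{-\lambda t}\int_{-\infty}^{t+r}e^{\lambda u}dB^{(H)}_u$ plus $e^{-\lambda t}\int_t^{t+r}e^{\lambda u}dB^{(H)}_u$, bounds the cross terms by Cauchy--Schwarz via the isometry \eqref{KRel1} (obtaining $O(rs)$ and $O(|r| |s|^{H})$ contributions, both $o((r^2+s^2)^{H})$), and identifies the leading term by the same local-freezing argument used for Proposition \ref{stationaryfBM}; the exponential case is then dispatched by appealing to Proposition \ref{proposition:C_gamma_of_function_of_gaussian_process}. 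You instead exploit stationarity to polarize the problem down to the one-dimensional expansion $R(0)-R(h)=\tfrac{1}{2}\sigma^{2}|h|^{2H}+O(h^{2})$, and for $\exp(Y^{(H)})$ you use the exact Gaussian moment-generating identity to get a closed form for the covariance before Taylor expanding. Your reduction is cleaner and yields an exact formula in the exponential case, at the cost of being specific to stationary processes and to $f=\exp$, whereas the paper's route through Proposition \ref{proposition:C_gamma_of_function_of_gaussian_process} covers general $C^{2}$ functions. Two small remarks: first, for the purposes of $(\star)$ alone, your careful singular-kernel analysis of $\mathbb{E}[Y^{(H)}_{t}(B^{(H)}_{t+h}-B^{(H)}_{t})]$ is more than is needed, since Cauchy--Schwarz already gives a cross term of order $O(h^{1+H})=o(h^{2H})$ for $H<1$; the sharper $O(h)$ bound does become relevant, however, if one wants to verify condition (a) of Proposition \ref{proposition:C_gamma_of_function_of_gaussian_process} with an explicit rate (though there, too, $R(r)-R(0)=O(|r|^{2H})=O(|r|)$ suffices because $2H>1$). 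Second, your constants $L=\sigma^{2}$ and $L=e^{2R(0)}\sigma^{2}$ agree with what the paper's route produces, since $\mathbb{E}[(e^{Y^{(H)}_{t}})'{}^{2}]=\mathbb{E}[e^{2Y^{(H)}_{t}}]=e^{2R(0)}$.
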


\subsubsection{Functions of Gaussian Processes}

We now proceed to define another class of processes satisfying Assumption \ref{VolatilityConditionGeneral}. The {following} proposition guarantees that if a Gaussian process satisfies Assumption \ref{VolatilityConditionGeneral}, so does a {suitable smooth} function of the process. {See Appendix \ref{Sect:TecProof} for a proof.}
\begin{proposition}\label{proposition:C_gamma_of_function_of_gaussian_process}
Assume that $(Z_t)_{t \geq 0}$ is a Gaussian process that satisfies Assumption \ref{VolatilityConditionGeneral} uniformly over $(0, T)$,\footnote{{The Assumption \ref{VolatilityConditionGeneral} is satisfied uniformly over $(0, T)$ if
$\sup_{\tau\in (0,T)}
(r^2 + s^2)^{ - \gamma/2} \left| 
\mathbb{E} [ (V_{\tau+r} - V_{\tau})(V_{\tau+s} - V_{\tau}) ]  - L(\tau)C_\gamma(r,s) 
\right| \rightarrow 0$, 
as $r, s \rightarrow 0$, and, also, $\sup_{\tau\in (0, T)} |L(\tau)| < \infty$. This implies the existence of a positive constant $C$ such that $\mathbb{E} [(Z_{t} - Z_{s})^2] \leq C |t - s|^\gamma $, for all $t,s\in(0,T)$.}}
with $\gamma^{(Z)} \in [1, 2)$, $L(\cdot)$, and $C^{(Z)}_\gamma(\cdot, \cdot)$ defined as in \eqref{SCDefinitionGeneral}. For each fixed $\tau \in (0, T)$ and a function $f \in C^2(\mathbb{R})$, further assume the following:
\begin{enumerate}[(a)]
\item
$\mathbb{E}[(Z_{\tau + r} - Z_{\tau})Z_\tau] = O(|r|)$, $\mathbb{E}[Z_{\tau + r}] - \mathbb{E}[Z_{\tau}] = O(|r|)$, {as $r\to{}0$.}
\item
$\mathbb{E}[ (f^\prime(Z_\tau))^4] < \infty$, $\mathbb{E}[ \sup_{t\in (\tau - \epsilon, \tau + \epsilon)}(f^{\prime\prime}(Z_t))^4] < \infty$ for some $\epsilon > 0$.
\end{enumerate}
Then, the process $V_{t}:=f(Z_t)$, $t\geq{}0$, satisfies Assumption \ref{VolatilityConditionGeneral} with $\gamma^{(V)} = \gamma$ and $C^{(V)}_\gamma = \mathbb{E}[(f^\prime(Z_t))^2] C_\gamma^{(Z)}$.
\end{proposition}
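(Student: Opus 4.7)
My strategy combines a second-order Taylor expansion of $f$ with the jointly Gaussian structure of $(Z_\tau, Z_{\tau+r}, Z_{\tau+s})$. Writing
\[
f(Z_{\tau+r}) - f(Z_{\tau}) = f'(Z_\tau)(Z_{\tau+r}-Z_\tau) + R_r, \qquad R_r = \tfrac{1}{2}f''(\xi_r)(Z_{\tau+r}-Z_\tau)^2,
\]
with $\xi_r$ between $Z_\tau$ and $Z_{\tau+r}$, I expand $(V_{\tau+r}-V_\tau)(V_{\tau+s}-V_\tau)$ into the principal term $(f'(Z_\tau))^2(Z_{\tau+r}-Z_\tau)(Z_{\tau+s}-Z_\tau)$ plus three cross/remainder terms involving $R_r$ or $R_s$. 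The goal is to identify the leading behaviour of the principal term and show that all other terms are $o((r^2+s^2)^{\gamma/2})$.

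For the principal term, I decompose the Gaussian increments orthogonally against $Z_\tau$: with $\mu:=\mathbb{E}[Z_\tau]$, $m_r := \mathbb{E}[Z_{\tau+r}-Z_\tau]$, and $\beta_r := \mathrm{Cov}(Z_\tau, Z_{\tau+r}-Z_\tau)/\mathrm{Var}(Z_\tau)$, I write $Z_{\tau+r}-Z_\tau = m_r + \beta_r(Z_\tau-\mu) + W_r$, where $W_r$ is a centered Gaussian independent of $Z_\tau$, and analogously for $s$. Hypothesis (a) forces $m_r,\beta_r=O(|r|)$, so every term in the resulting expansion that carries at least one factor among $m_r, m_s, \beta_r, \beta_s$ contributes $O(|rs|)$ after multiplying by $(f'(Z_\tau))^2$ and taking expectations (using the finite fourth moment in (b) and $\mathbb{E}[W_{\cdot}]=0$ to kill every term linear in $W$). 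The surviving piece reduces to $\mathbb{E}[(f'(Z_\tau))^2]\,\mathrm{Cov}(W_r, W_s)$, and from the identity $\mathrm{Cov}(W_r,W_s) = \mathrm{Cov}(Z_{\tau+r}-Z_\tau, Z_{\tau+s}-Z_\tau) - \beta_r\beta_s\mathrm{Var}(Z_\tau)$ together with the assumed expansion $\mathbb{E}[(Z_{\tau+r}-Z_\tau)(Z_{\tau+s}-Z_\tau)] = L(\tau)C_\gamma^{(Z)}(r,s) + o((r^2+s^2)^{\gamma/2})$, I recover $\mathbb{E}[(f'(Z_\tau))^2]L(\tau)C_\gamma^{(Z)}(r,s) + o((r^2+s^2)^{\gamma/2})$, the desired leading term.

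The remainder contributions $\mathbb{E}[R_rR_s]$ and $\mathbb{E}[f'(Z_\tau)(Z_{\tau+r}-Z_\tau)R_s]$ (plus its symmetric twin) are controlled by H\"older's inequality with four equal exponents. Continuity of the Gaussian path $Z$ and the intermediate value theorem give $|f''(\xi_r)| \leq \sup_{t\in(\tau-\epsilon,\tau+\epsilon)}|f''(Z_t)|$ for $|r|<\epsilon$, whose fourth moment is finite by (b); and since $Z_{\tau+r}-Z_\tau$ is Gaussian with variance $O(|r|^\gamma)$, its $(2k)$-th moment is $O(|r|^{k\gamma})$. These bounds yield $\mathbb{E}[R_rR_s]=O(|rs|^\gamma)$ and $\mathbb{E}[f'(Z_\tau)(Z_{\tau+r}-Z_\tau)R_s]=O(|r|^{\gamma/2}|s|^\gamma)$, both $o((r^2+s^2)^{\gamma/2})$ via the elementary bound $|r|^{\alpha}|s|^{\beta}\leq(r^2+s^2)^{(\alpha+\beta)/2}$. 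The main obstacle will be the careful bookkeeping needed to check that every one of the cross terms is truly $o((r^2+s^2)^{\gamma/2})$; this is precisely where the constraint $\gamma<2$ is invoked, as $\gamma=2$ would make terms of order $|rs|$ comparable to the principal one and require a higher-order expansion.
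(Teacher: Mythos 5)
Your proposal is correct and follows essentially the same route as the paper's proof: a second-order Taylor expansion, H\"older's inequality with four equal exponents to show the cross and remainder terms are $O(|r|^{\gamma/2}|s|^{\gamma})$ and $O(|rs|^{\gamma})$, and an orthogonal Gaussian decomposition of the increments against $Z_\tau$ (your $W_r$ playing the role of the paper's $a_2X+a_3Y$), with hypothesis (a) and the restriction $\gamma<2$ rendering the $O(|rs|)$ contributions negligible. The only detail the paper spells out that you take for granted is the existence of a continuous modification of $Z$ (obtained there via the Kolmogorov--\v{C}entsov theorem from the uniform version of Assumption \ref{VolatilityConditionGeneral}), which is what justifies writing the Taylor intermediate point as $Z_{\theta(\tau,r)}$ for a time $\theta(\tau,r)$ near $\tau$ and hence invoking the supremum in hypothesis (b).
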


\begin{remark}
Note that the condition (a) in Proposition \ref{proposition:C_gamma_of_function_of_gaussian_process} is not a consequence of Assumption \ref{VolatilityConditionGeneral}.  {This is satisfied by a large class of Gaussian processes,} such as a fBM with zero mean and covariance structure given by \eqref{fBM_Vol_Cov_Structure}. Intuitively, this condition states that, although $Z_\tau$ and $Z_{\tau + r} - Z_{\tau}$ may not be independent, its correlation coefficient vanishes, as $r \rightarrow 0$, fast enough as compared with standard deviation of $Z_{\tau + r} - Z_{\tau}$.
\end{remark}

\subsection{Conditions on the Kernel {and Preliminary Results}}\label{Kernel_Condition_Section}
In this part, we introduce the {assumptions} needed on the kernel function, together with some required lemmas.

\begin{assumption}\label{AdmissibleKernel}
Given $\gamma > 0$ and $C_{\gamma}$ as defined in Assumption \ref{VolatilityConditionGeneral}, {we assume that the kernel function $K:\mathbb{R} \rightarrow \mathbb{R}$ satisfies the following conditions:}
\begin{enumerate}[(1)]
\item
$\int K(x) dx = 1${;}
\item
$K$ is Lipschitz and piecewise $C^1$ on its support $(A,B)$, where $-\infty \leq A < 0 < B \leq \infty${;}
\item
{(i)} $\int |K(x)||x|^{\gamma} dx < \infty$; {(ii) $K(x)x^{\gamma + 1} \rightarrow 0$, as} $|x| \rightarrow \infty$; {(iii)} $\int |K^{\prime}(x)| dx < \infty$, {(iv)} $V_{-\infty}^{\infty} (|K^{\prime}|) < \infty$, where $V_{-\infty}^{\infty}(\cdot)$ is the total variation{;}
\item
$\iint K(x)K(y)C_{\gamma}(x,y)dxdy > 0$.
\end{enumerate}
\end{assumption}

\begin{remark}
{Note} that (4) above does not put substantial restriction on $K$ since, {in any case,} $C_\gamma$ is non-negative {definite (see} Proposition \ref{Proposition_Nonnegative_Definiteness_of_C_Gamma}) {and, furthermore, $C_\gamma$ is strictly positive definite in some important cases such as BM type volatilities}. In the case of deterministic volatility, it is possible to find $K$ such that $\iint K(x)K(y)C_{\gamma}(x,y)dxdy = 0$, {which actually will lead to even a faster rate of convergence of the estimation mean-squared error.} This will be discussed further in Section \ref{section_optimal_kernel_deterministic}.
\end{remark}

The following two technical lemmas will be used throughout the paper, and the proofs are deferred to the Appendices.

\begin{lemma}\label{D1Lemma}
For $\gamma > 0$, assume the following for a function $f:\mathbb{R}^m \rightarrow \mathbb{R}$ {and {functions} $K_i : \mathbb{R} \rightarrow \mathbb{R}$, $1\leq i \leq m$}:
\begin{enumerate}[(i)]
\item
$f(\tau+s_1,...,\tau+s_m) - f(\tau,...,\tau) = C_{\gamma} (s_1,...,s_m;\tau) + o ((s_1^2 + .. + s_m^2)^{\gamma / 2})$, as $(s_1,...,s_m)\rightarrow 0$ for any given $ \tau \in (0,T)$, where $C_\gamma :\mathbb{R}^m \times [0,T] \rightarrow \mathbb{R}$ is a function such that
$$C_{\gamma}(hs_1,...,hs_m;\tau) = h^{\gamma} C_{\gamma}(s_1,...,s_m;\tau),\quad s_1,...,s_m \in \mathbb{R}, h > 0, \tau \in (0, T).$$
\item
$f \in C([0,T]^m)$.
\item
For $1\leq i \leq m$, $K_i$ satisfies {Conditions} (2) and (3) of Assumption \ref{AdmissibleKernel} with a support $(A_i, B_i)$.
\end{enumerate}	
Let
\begin{equation*}
\begin{split}
D_1 (f)
:= & 
\sum _{i_1,...,i_m=1}^n K_{1h}(t_{i_1-1} - \tau)...K_{mh}(t_{i_m-1} - \tau) \int _{t_{i_1-1}}^{t_{i_1}} ... \int _{t_{i_m-1}}^{t_{i_m}} f(s_1,...,s_m) ds_1...ds_m \\
& \quad - 
\int _{[0,T]^m} K_{1h}(s_1 - \tau) ... K_{mh}(s_m - \tau) f(s_1,...,s_m) ds_1 ... ds_m ,
\end{split}
\end{equation*}
where {$K_{ih}(x) := K_i(x/h)/h$}. Then, for {each} $\tau \in (0,T)$, we have the following:
\begin{equation*}
\begin{split}
D_1 (f)
=
\frac{1}{2} f(\tau,...,\tau) \left[ \prod_{i=1}^m \int K_i(x)dx \right] \sum_{i=1}^m \frac{ K_i(A_{i}^+)- K_i(B_{i}^-)} {\int K_i(x)dx} \frac{\Delta}{h} + o \left( \frac{\Delta}{h} \right) .
\end{split}
\end{equation*}
as {$h\to{}0$ and $\Delta/h \rightarrow 0$}. {If, furthermore, the condition (i) above} is satisfied uniformly over $\tau \in (0, T)$, {then} the approximation above is also uniform over $\tau \in (0, T)$.
\end{lemma}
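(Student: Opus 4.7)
The plan is to prove Lemma \ref{D1Lemma} by a two-step reduction: first isolate the $k$-th variable discretization error via a telescoping identity, then replace $f$ by its value at $(\tau,\ldots,\tau)$ and reduce to a one-dimensional Riemann-sum quadrature error controlled by the regularity conditions on each kernel $K_k$.

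To begin, I would introduce the left-projection $t^*(s):=t_{i-1}$ for $s\in[t_{i-1},t_i)$ and rewrite the discrete sum in the definition of $D_1(f)$ as an integral, obtaining the compact form
\begin{equation*}
D_1(f)=\int_{[0,T]^m}\Big[\prod_{k=1}^{m}K_{kh}\big(t^*(s_k)-\tau\big)-\prod_{k=1}^{m}K_{kh}(s_k-\tau)\Big]\,f(s_1,\ldots,s_m)\,ds_1\cdots ds_m.
\end{equation*}
A telescoping identity $\prod_{k}a_k-\prod_{k}b_k=\sum_{k}(a_k-b_k)\prod_{j<k}a_j\prod_{j>k}b_j$ then splits $D_1(f)=\sum_{k=1}^m T_k$ into $m$ single-variable-discretization terms, where only the $k$-th factor involves the difference $K_{kh}(t^*(s_k)-\tau)-K_{kh}(s_k-\tau)$.

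Next I would handle the dependence of $f$ by writing $f(s)=f(\tau,\ldots,\tau)+[f(s)-f(\tau,\ldots,\tau)]$ and showing that the remainder piece contributes only $o(\Delta/h)$. Since the supports of the kernels localize $s$ to a neighborhood of $\tau$ of radius $O(h)$, condition (i) of the lemma (together with the homogeneity of $C_\gamma$) gives $|f(s)-f(\tau,\ldots,\tau)|=O(h^\gamma)$ on the relevant region. The $k$-th factor is bounded pointwise by $O(\Delta/h^2)$ by the Lipschitz property of $K_k$ and is supported on an interval of length $O(h)$, so its $L^1$ norm is $O(\Delta/h)$; the other factors integrate to $O(1)$ after a change of variables. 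Thus the remainder term in $T_k$ is $O(h^\gamma\cdot\Delta/h)=o(\Delta/h)$, which is precisely where the hypothesis $\gamma>0$ is used.

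It remains to compute the leading term, in which $f(\tau,\ldots,\tau)$ factors out and the integrand is a product, so each $T_k$ factorizes as
\begin{equation*}
f(\tau,\ldots,\tau)\Big[\prod_{j<k}\!\!\int K_{jh}(t^*(s_j)-\tau)\,ds_j\Big]\Big[\!\int\big(K_{kh}(t^*(s_k)-\tau)-K_{kh}(s_k-\tau)\big)ds_k\Big]\Big[\prod_{j>k}\!\!\int K_{jh}(s_j-\tau)\,ds_j\Big].
\end{equation*}
The continuous one-dimensional integrals give $\int K_{jh}(s-\tau)\,ds=\int K_j$, and the discretized ones give $\int K_j+O(\Delta/h)$. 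The central factor is a left-Riemann quadrature error that, after the rescaling $u=(s_k-\tau)/h$, reduces to estimating $\delta\sum_i K_k(u_{i-1})-\int K_k(u)\,du$ with $\delta=\Delta/h$. A summation-by-parts / Euler--Maclaurin computation, justified by condition (3) of Assumption \ref{AdmissibleKernel} (piecewise $C^1$ with $K_k'\in L^1$ and $V_{-\infty}^{\infty}(|K_k'|)<\infty$, plus the decay $K_k(x)x^{\gamma+1}\to 0$ to control contributions beyond the support), yields the leading order $\frac{\delta}{2}[K_k(A_k^+)-K_k(B_k^-)]+o(\delta)$. Plugging this in, multiplying the three factors, and summing over $k$ (the cross terms involving two quadrature-error factors are $O((\Delta/h)^2)=o(\Delta/h)$) gives the claimed formula. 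For the uniform version, one observes that every estimate above---the Hölder bound from condition (i), the Lipschitz/BV bounds on $K_k$, and the Riemann-sum error---can be taken uniformly in $\tau\in(0,T)$ under the stated uniform assumption.

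The main technical obstacle is the last step, the sharp Riemann-sum quadrature expansion near the support boundaries of $K_k$; there the jump of $K_k$ from $0$ to $K_k(A_k^+)$ (and similarly at $B_k$) contributes terms of size $\Delta/h$ that must be tracked explicitly using the bounded variation of $K_k'$ to obtain both the correct coefficient $\tfrac12$ and the $o(\Delta/h)$ remainder uniformly. The decay condition $K_k(x)x^{\gamma+1}\to 0$ as $|x|\to\infty$ from condition (3)(ii) is what allows us to treat the cases $A_k=-\infty$ or $B_k=\infty$ (in which $K_k(A_k^+)$ or $K_k(B_k^-)$ vanish) and to neglect tail contributions outside a bounded effective support.
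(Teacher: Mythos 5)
Your proposal follows essentially the same route as the paper's proof: telescoping the product of kernels to reduce to one variable at a time, freezing $f$ at $(\tau,\dots,\tau)$ with the remainder controlled to $o(\Delta/h)$ via condition (i), and extracting the coefficient $\tfrac{1}{2}\left(K_k(A_k^+)-K_k(B_k^-)\right)$ from the left-Riemann quadrature error using $K_k'$ and its bounded variation. The only cosmetic difference is that the paper implements the quadrature step with the mean value theorem applied to $K_k$ together with an explicit near/far splitting at scale $\delta=\sqrt{h}$ (your ``localization to a neighborhood of radius $O(h)$'' strictly covers only bounded supports, but your appeal to the decay and integrability conditions for the tails supplies the same fix), so the estimates coincide.
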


\begin{remark}
{It is worth mentioning that $C_\gamma$ here has similar meaning as the one appeared in Assumption \ref{VolatilityConditionGeneral}, so we use the same notation $C_\gamma$.}
It is also worth noticing that {if} $h \rightarrow h_0 > 0$ {but still $\Delta\to{}0$}, {then} we {again have} {$D_1(f) \sim \Delta/h$}, but the constant before {$\Delta/h$} depends on $f$ not only through $f(\tau, ..., \tau)$. 
\end{remark}

\begin{lemma}\label{D2Lemma}
For $\gamma > 0$, assume the following for a function $f:\mathbb{R}^m \rightarrow \mathbb{R}$ and a {function} $K : \mathbb{R} \rightarrow \mathbb{R}$:
\begin{enumerate}[(i)]
\item
$f$ satisfies {the conditions} (i) and (ii) of Lemma \ref{D1Lemma},
\item
$K$ satisfies {the conditions} (2) and (3) of Assumption \ref{AdmissibleKernel} with a support $(A, B)$.
\end{enumerate}
Let
$$
D_2 (f) := \int_{[0,T]^m} K_h(t_1 - \tau)... K_h(t_m - \tau) f(t_1,...,t_m)dt_1...dt_m - f(\tau,...,\tau) \left( \int K(x) dx \right)^m	.
$$
Then, for all $\tau \in (0,T)$, we have:
$$
D_2 (f)=
h^{\gamma} \int K(t_1)...K(t_m) C_\gamma (t_1,...,t_m;\tau) {dt_{1}\dots dt_{m}} + o(h^{\gamma}),\qquad {(h\to{}0).}
$$
{The} result remains the same if the integration domain of the first term in $D_2(f)$ is $\mathbb{R}^m$, instead of $[0,T]^m$. {Furthermore, if the condition} (i) of Lemma \ref{D1Lemma} is satisfied uniformly over $\tau \in (0, T)$, the approximation above {holds true uniformly} over $\tau \in (0, T)$.
\end{lemma}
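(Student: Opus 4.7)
The proof proceeds by a change of variables followed by a careful decomposition into a boundary error, a main $h^\gamma$ term, and a remainder controlled by the $o((s_1^2+\cdots+s_m^2)^{\gamma/2})$ assumption on $f$.

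The plan is to substitute $t_i = \tau + h u_i$ in the first integral defining $D_2(f)$, so that $\prod_i K_h(t_i-\tau)\,dt_1\cdots dt_m$ becomes $\prod_i K(u_i)\,du_1\cdots du_m$, while the domain of integration becomes $\mathcal{D}_h := \{u \in \mathbb{R}^m : \tau + h u_i \in [0,T],\; i=1,\ldots,m\}$, which exhausts $\mathbb{R}^m$ as $h\to 0$. Writing $f(\tau + h u) = f(\tau,\ldots,\tau) + [f(\tau+hu) - f(\tau,\ldots,\tau)]$ and using that $\bigl(\int K\bigr)^m = \int_{\mathbb{R}^m}\prod_i K(u_i)\,du$, one obtains
\begin{equation*}
D_2(f) = \int_{\mathcal{D}_h}\!\!\prod_i K(u_i)\bigl[f(\tau+hu) - f(\tau,\ldots,\tau)\bigr]du \;-\; f(\tau,\ldots,\tau)\!\int_{\mathbb{R}^m\setminus\mathcal{D}_h}\!\!\prod_i K(u_i)\,du.
\end{equation*}

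For the boundary term, since $\mathbb{R}^m\setminus\mathcal{D}_h \subseteq \bigcup_i\{|u_i|\geq c_\tau/h\}$ for some $c_\tau > 0$, and since Condition (3)(ii) of Assumption \ref{AdmissibleKernel} gives $|K(x)|\leq \varepsilon |x|^{-(\gamma+1)}$ for $|x|$ large, a direct tail estimate yields $\int_{|u|\geq c/h}|K(u)|du = o(h^\gamma)$. Combined with $\int|K|<\infty$, this shows the boundary term is $o(h^\gamma)$; the same argument disposes of the variant where the original domain is $\mathbb{R}^m$ instead of $[0,T]^m$.

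For the principal term, I use the expansion in hypothesis (i) to write $f(\tau+hu) - f(\tau,\ldots,\tau) = C_\gamma(hu;\tau) + R(hu;\tau)$ with $R(s;\tau) = o(|s|^\gamma)$, and invoke homogeneity to replace $C_\gamma(hu;\tau)$ by $h^\gamma C_\gamma(u;\tau)$. The integrability of $\prod_i K(u_i)\,C_\gamma(u;\tau)$ over $\mathbb{R}^m$ follows from the homogeneity bound $|C_\gamma(u;\tau)|\leq \|C_\gamma(\cdot;\tau)\|_{S^{m-1}}|u|^\gamma \leq c_\tau \sum_i|u_i|^\gamma$ (when $\gamma\geq 1$; the $\gamma<1$ case uses subadditivity) together with Condition (3)(i), so dominated convergence gives
$$
h^\gamma\!\int_{\mathcal{D}_h}\!\prod_i K(u_i)\,C_\gamma(u;\tau)\,du = h^\gamma\!\int_{\mathbb{R}^m}\!\prod_i K(u_i)\,C_\gamma(u;\tau)\,du + o(h^\gamma).
$$
The remainder $\int_{\mathcal{D}_h}\prod_i K(u_i)R(hu;\tau)\,du$ is the main obstacle and is handled by a standard uniform/nonuniform split: for any $\varepsilon>0$, pick $\delta>0$ so that $|R(s;\tau)|\leq \varepsilon|s|^\gamma$ whenever $|s|\leq\delta$. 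On $\{|u|\leq\delta/h\}$ the contribution is bounded by $\varepsilon h^\gamma\int|u|^\gamma\prod|K(u_i)|du = O(\varepsilon h^\gamma)$, while on $\{|u|>\delta/h\}$ I bound $|R(hu;\tau)|$ crudely by $|f(\tau+hu)-f(\tau,\ldots,\tau)|+|C_\gamma(hu;\tau)|\leq 2\|f\|_\infty + h^\gamma|C_\gamma(u;\tau)|$, using that $f$ is bounded on $[0,T]^m$; the tail of $\prod|K(u_i)|$ over $\{|u|>\delta/h\}$ is $o(h^\gamma)$ by the same argument as the boundary term. Letting $\varepsilon\downarrow 0$ yields the claimed $o(h^\gamma)$.

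Finally, uniformity over $\tau\in(0,T)$ under the uniform version of hypothesis (i) is immediate from tracking each estimate: the $\delta$ in the nonuniform step can be chosen independently of $\tau$, the bound on $C_\gamma(\cdot;\tau)$ over the unit sphere is uniform in $\tau$ (since $L(\tau)$ is bounded on $(0,T)$), and the boundary tails depend only on $K$. The main conceptual obstacle is verifying that the remainder $R$ can be controlled uniformly on $\mathcal{D}_h$ despite its unbounded size, which is precisely where the tail decay $K(x)x^{\gamma+1}\to 0$ is essential.
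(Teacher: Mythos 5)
Your proposal is correct and follows essentially the same route as the paper's proof: both split the domain into a neighborhood of $\tau$ (where the $\varepsilon$--$\delta$ form of the $o((s_1^2+\cdots+s_m^2)^{\gamma/2})$ remainder and the homogeneity of $C_\gamma$ give the $h^\gamma$ leading term) and a far region handled by the boundedness of $f$ together with the tail estimate $\int_{|x|>\epsilon/h}|K(x)|\,dx=o(h^\gamma)$ coming from $K(x)x^{\gamma+1}\to 0$. The only cosmetic difference is that you perform the change of variables $t_i=\tau+hu_i$ up front, whereas the paper works in the original variables and obtains the tail bound via l'H\^opital rather than a direct comparison with $|x|^{-(\gamma+1)}$.
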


\section{Approximation of MSE and {Optimal} Bandwidth Selection}\label{section_bandwidth_selection}
In this section, we assume that the processes $\mu$, $\sigma$, and, $B$ satisfy Assumptions \ref{IndependentCondition}, \ref{Boundedness_Condition}{,} and \ref{VolatilityConditionGeneral}{,} and we consider a kernel function $K$ that satisfies Assumption \ref{AdmissibleKernel}. 
{In what follows}, we first deduce an explicit leading order approximation (up to $O(\frac{\Delta}{h})$ and $O(h^{\gamma})$ {terms}) of the $\mbox{MSE} = \mbox{MSE}_{n,h} = \mathbb{E}[(\hat{\sigma}^2_{\tau,n,h} - \sigma^2_{\tau} )^2]$. After this, we proceed to study the approximated optimal bandwidth $h${, which is defined as the bandwidth that} minimizes {the leading order approximation of the} MSE. {Finally, we} prove that our approximated optimal bandwidth is {asymptotically equivalent} to the true optimal bandwidth that minimizes the true MSE.

\subsection{Approximation of the Mean Squared Error}
Let us start by writing the MSE as
\begin{align*}
\mbox{MSE}
& =
\mathbb{E}\left[\left(\sum_{i=1}^n K_h(t_{i-1} - \tau) (\Delta_i X)^2 - \sigma^2_{\tau} \right)^2\right] \\
& = 
\mathbb{E}\left[\left(\sum_{i=1}^n K_h(t_{i-1} - \tau) ((\Delta_i X)^2 - \Delta \sigma^2_{\tau} ) + \left( \sum_{i=1}^n K_h(t_{i-1} - \tau) \Delta - 1 \right)  \sigma^2_{\tau} \right)^2 \right] .
\end{align*}
By {Lemmas} \ref{D1Lemma} and \ref{D2Lemma} with $f(t) \equiv 1$, we have $\sum_{i=1}^n K_h(t_{i-1} - \tau) \Delta - 1 = O \left( \frac{\Delta}{h} \right) + o(h^\gamma)$ and, {thus,
\begin{equation}\label{MSE_Deduction0}
\begin{split}
\mbox{MSE}
& =
\mathbb{E}\left[\left(\sum_{i=1}^n K_h(t_{i-1} - \tau) ((\Delta_i X)^2 - \Delta \sigma^2_{\tau} ) \right)^2\right] + o \left( \frac{\Delta}{h} \right) + o(h^\gamma)\\
& =
\sum_{i=1}^n \sum_{j=1}^n K_h(t_{i-1} - \tau) K_h(t_{j-1} - \tau) \mathbb{E}[ ((\Delta_i X)^2 - \Delta \sigma^2_{\tau} ) ((\Delta_j X)^2 - \Delta \sigma^2_{\tau} ) ] + o \left( \frac{\Delta}{h} \right) + o(h^\gamma),
\end{split}
\end{equation}
which, applying {Lemmas} \ref{D1Lemma} and \ref{D2Lemma} together with Assumption \ref{IndependentCondition} and \ref{Boundedness_Condition} {(we} refer to the Appendix \ref{More_Technical_Details} for more details), can further be written as 
\begin{equation}\label{MSE_Deduction1}
\begin{split}
MSE
& =
\sum_{i=1}^n \sum_{j=1}^n K_h(t_{i-1} - \tau) K_h(t_{j-1} - \tau) \\& \quad
\quad\quad\quad {\times}\,\mathbb{E} \left[ \left( \left( \int_{t_{i-1}}^{t_i} \sigma_t dB_t \right)^2 - \Delta \sigma^2_{\tau} \right) \left( \left( \int_{t_{j-1}}^{t_j} \sigma_t dB_t \right)^2 - \Delta \sigma^2_{\tau} \right) \right] + o \left( \frac{\Delta}{h} \right) + o(h^\gamma).
\end{split}
\end{equation}
Next, by Assumption \ref{IndependentCondition}, it readily follows that
\begin{equation}\label{MSE_Deduction}
\begin{split}
MSE& =
2 \sum_{i=1}^n K_h^2(t_{i-1} - \tau) \mathbb{E} \left[ \left(\int_{t_{i-1}}^{t_i} \sigma_t^2  dt \right)^2 \right] \\
& \quad +
\sum_{i=1}^n \sum_{j=1}^n K_h(t_{i-1} - \tau) K_h(t_{j-1} - \tau)  \int_{t_{i-1}}^{t_i} \int_{t_{j-1}}^{t_j} \mathbb{E} [ (\sigma_t^2 - \sigma_{\tau}^2) (\sigma_s^2 - \sigma_{\tau}^2) ] dt ds
+ o \left( \frac{\Delta}{h} \right) + o(h^\gamma)\\
& =:
2 V_1 + V_2 + o \left( \frac{\Delta}{h} \right) + o(h^\gamma).
\end{split}
\end{equation}
We} now proceed to {analyze} $V_1$ and $V_2$. Firstly, for $V_1$, {note that}
\begin{align*}
\mathbb{E} \left(\int_{t_{i-1}}^{t_i} \sigma_t^2  dt \right)^2
& =
\Delta ^2 \mathbb{E}[\sigma_{\tau}^4] + 2 \Delta \int_{t_{i-1}}^{t_i}  \mathbb{E} [ (\sigma_t^2 - \sigma_{\tau}^2) \sigma_{\tau}^2 ] dt + \mathbb{E} \left( \int_{t_{i-1}}^{t_i} ( \sigma_t^2 - \sigma_{\tau}^2 ) dt \right) ^2 \\&
=:
\Delta ^2 \mathbb{E}[\sigma_{\tau}^4] + B_i + C_i .
\end{align*}
To {analyze} the contribution of {each of the three terms above} to $V_1$, we use Lemma \ref{D1Lemma} and \ref{D2Lemma} with kernel function $K^2$ and the following three different {functions $f$}:
$$f(t) = 1, \quad f(t) = \sqrt{ \mathbb{E} [ ( \sigma_t^2 - \sigma_{\tau}^2 )^2 ] \mathbb{E} [ \sigma_{\tau}^4 ] }, \quad f(t) = \mathbb{E} [ ( \sigma_t^2 - \sigma_\tau^2 )^2 ],$$
respectively. {It then follows that}
\begin{align*}
\mathbb{E}[\sigma_{\tau}^4] \Delta ^2 \sum_{i=1}^n K^2_h(t_{i-1} - \tau)
& =
\mathbb{E}[\sigma_{\tau}^4] \frac{\Delta}{h} \sum_{i=1}^n K^2(\frac{t_{i-1} - \tau}{h}) \frac{\Delta}{h} 
= 
\frac{\Delta}{h} \mathbb{E}[\sigma_{\tau}^4] \int K^2(x)dx + o \left( \frac{\Delta}{h} \right) + o(h^\gamma), \\
\sum_{i=1}^n K^2_h(t_{i-1} - \tau) B_i
& \leq
2 \frac{\Delta}{h} \sum_{i=1}^n K^2 (\frac{t_{i-1} - \tau}{h}) \frac{1}{h} \int_{t_{i-1}}^{t_i} 
\sqrt{ \mathbb{E} [ ( \sigma_t^2 - \sigma_{\tau}^2 )^2 ] \mathbb{E} [ \sigma_{\tau}^4 ] } dt 
= o \left( \frac{\Delta}{h} \right) + o(h^\gamma), \\
\sum_{i=1}^n K^2_h(t_{i-1} - \tau) C_i
& \leq
\frac{\Delta}{h} \sum_{i=1}^n K^2 (\frac{t_{i-1} - \tau}{h})\frac{1}{h} \int_{t_{i-1}}^{t_i} \mathbb{E} [ ( \sigma_t^2 - \sigma_\tau^2 )^2 ] dt 
= 
o \left( \frac{\Delta}{h} \right) + o(h^\gamma),
\end{align*}
{where the second line above follows from the fact that $\mathbb{E} [ ( \sigma_t^2 - \sigma_{\tau}^2 )^2 ] = O(|t - \tau|^\gamma)$.} 
{Putting together the previous relationships}, we conclude that
$$
V_1 
=
\sum_{i=1}^n K_h^2(t_{i-1} - \tau) {\mathbb{E}\left[ \left(\int_{t_{i-1}}^{t_i} \sigma_t^2  dt\right)^2 \right]}
=
\frac{\Delta}{h} \mathbb{E}[\sigma_{\tau}^4] \int K^2(x)dx + o \left( \frac{\Delta}{h} \right) + o(h^{\gamma}).
$$
{Next,} applying directly Lemmas \ref{D1Lemma} and \ref{D2Lemma} and Assumption \ref{VolatilityConditionGeneral}, $V_2$ can be written as
$$
V_2 = h^{\gamma} \int\int K(x)K(y)C_\gamma(x,y;\tau) dxdy + o \left( \frac{\Delta}{h} \right) + o(h^{\gamma}).
$$
Finally, we conclude the following explicit asymptotic expansion for the MSE of our kernel estimator.

\begin{theorem}\label{Approximated_MSE_Theorem}
For {the} model (\ref{Asset_Dynamic}) with $\mu$ and $\sigma$ satisfying Assumptions \ref{IndependentCondition}, \ref{Boundedness_Condition},  and \ref{VolatilityConditionGeneral}{,} and a kernel function {$K$} satisfying {Assumption} \ref{AdmissibleKernel}, we have, for any {$\tau \in (0,T)$},
\begin{equation}\label{Approximated_MSE_Formula}
\begin{split}
\mbox{MSE}_{\tau, n, h}
& =
\mathbb{E}[(\hat{\sigma}^2_{\tau} - \sigma^2_{\tau} )^2] \\&
= 
2\frac{\Delta}{h} \mathbb{E}[\sigma_{\tau}^4] \int K^2(x)dx
 +
h^{\gamma} L(\tau) \iint K(x)K(y) C_\gamma (x,y) dxdy 
+
{o\left(\frac{\Delta}{h}\right)} + {o \left(h^{\gamma}\right)}.
\end{split}
\end{equation}
\end{theorem}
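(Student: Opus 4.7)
The plan is to decompose the MSE into a variance-type ``diagonal'' contribution and a covariance-type ``bias'' contribution, then extract the leading-order asymptotics of each using Lemmas \ref{D1Lemma} and \ref{D2Lemma}. First, I would write
$$
\hat\sigma^2_\tau - \sigma^2_\tau = \sum_{i=1}^n K_h(t_{i-1}-\tau)\bigl((\Delta_iX)^2 - \Delta\sigma^2_\tau\bigr) + \Bigl(\sum_{i=1}^n K_h(t_{i-1}-\tau)\Delta - 1\Bigr)\sigma^2_\tau,
$$
and apply Lemmas \ref{D1Lemma} and \ref{D2Lemma} with $f\equiv 1$ to show that the normalization term is $O(\Delta/h) + o(h^\gamma)$. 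A Cauchy--Schwarz argument together with Assumption \ref{Boundedness_Condition} then reduces the computation of $\mbox{MSE}$ to that of $\mathbb{E}\bigl[\bigl(\sum_i K_h(t_{i-1}-\tau)((\Delta_i X)^2 - \Delta\sigma^2_\tau)\bigr)^2\bigr]$ up to an $o(\Delta/h)+o(h^\gamma)$ error.

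Next, I would replace $(\Delta_i X)^2$ with $\bigl(\int_{t_{i-1}}^{t_i}\sigma_t\,dB_t\bigr)^2$, absorbing all drift contributions into the remainder via Cauchy--Schwarz and the fourth-moment bound on $\mu$ from Assumption \ref{Boundedness_Condition}. Then I would condition on $(\mu,\sigma)$ and exploit Assumption \ref{IndependentCondition}: conditionally, the stochastic integrals $\int_{t_{i-1}}^{t_i}\sigma\,dB$ are independent centered Gaussians with variances $\int_{t_{i-1}}^{t_i}\sigma^2\,dt$. Using the identity $\mathrm{Var}(Z^2)=2\mathrm{Var}(Z)^2$ for centered Gaussians, the resulting double sum splits as
$$
V_1 := 2\sum_{i} K_h^2(t_{i-1}-\tau)\mathbb{E}\Bigl[\Bigl(\int_{t_{i-1}}^{t_i}\sigma^2_t\,dt\Bigr)^{\!2}\Bigr],\qquad V_2 := \sum_{i,j}K_h(t_{i-1}-\tau)K_h(t_{j-1}-\tau)\iint\mathbb{E}\bigl[(\sigma^2_t-\sigma^2_\tau)(\sigma^2_s-\sigma^2_\tau)\bigr]\,dt\,ds.
$$

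For $V_1$, expand $\mathbb{E}[(\int\sigma^2\,dt)^2] = \Delta^2\mathbb{E}[\sigma^4_\tau] + B_i + C_i$, where $B_i$ involves $\mathbb{E}[(\sigma^2_t-\sigma^2_\tau)\sigma^2_\tau]$ and $C_i$ involves $\mathbb{E}[(\sigma^2_t-\sigma^2_\tau)^2]$. Applying Lemmas \ref{D1Lemma} and \ref{D2Lemma} with kernel $K^2$ and $f\equiv 1$ produces the leading term $(\Delta/h)\mathbb{E}[\sigma^4_\tau]\int K^2(x)\,dx$, while Cauchy--Schwarz together with the moment bound $\mathbb{E}[(\sigma^2_t-\sigma^2_\tau)^2] = O(|t-\tau|^\gamma)$ (which follows from Assumption \ref{VolatilityConditionGeneral}) shows that $B_i$ and $C_i$ contribute only $o(\Delta/h)+o(h^\gamma)$. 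For $V_2$, I would apply Lemma \ref{D2Lemma} directly in two variables with $f(t,s)=\mathbb{E}[(\sigma^2_t-\sigma^2_\tau)(\sigma^2_s-\sigma^2_\tau)]$, whose local homogeneous structure near $(\tau,\tau)$ is $L(\tau)C_\gamma(\cdot,\cdot)$ by Assumption \ref{VolatilityConditionGeneral}, yielding $h^\gamma L(\tau)\iint K(x)K(y)C_\gamma(x,y)\,dx\,dy+o(h^\gamma)$. Adding the contributions from $V_1$, $V_2$ and the earlier remainders gives the claimed expansion.

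The main obstacle is the careful bookkeeping of the accumulated $o(\cdot)$ errors: in particular, ensuring that the drift-replacement step, the deterministic normalization correction, and the secondary corrections $B_i, C_i$ in $V_1$ all produce remainders consistent with $o(\Delta/h)+o(h^\gamma)$ after kernel-weighted summation. Each piece requires a judicious choice of $f$ and kernel in Lemmas \ref{D1Lemma}--\ref{D2Lemma}, and the verification of their hypotheses (notably the integrability bounds from Assumption \ref{AdmissibleKernel} and the homogeneous local covariance structure of Assumption \ref{VolatilityConditionGeneral}).
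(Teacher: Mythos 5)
Your proposal is correct and follows essentially the same route as the paper: the same decomposition into a normalization term plus the centered sum, the same drift-removal via Cauchy--Schwarz and Assumption \ref{Boundedness_Condition}, the same conditional-Gaussian split into the diagonal term $V_1$ and the covariance term $V_2$, and the same applications of Lemmas \ref{D1Lemma}--\ref{D2Lemma} (with kernel $K^2$ and the three choices of $f$ for $V_1$, and with $f(t,s)=\mathbb{E}[(\sigma_t^2-\sigma_\tau^2)(\sigma_s^2-\sigma_\tau^2)]$ for $V_2$). The error bookkeeping you flag as the main obstacle is exactly what the paper carries out in Appendix \ref{More_Technical_Details}.
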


Theorem \ref{Approximated_MSE_Theorem} {will be} the main tool to obtain the approximated optimal bandwidth and kernel function. As a direct consequence, we also have the following consistency result for the kernel estimator.
\begin{corollary}\label{Corollary_Convergence_of_Estimator}
With the same assumptions as those in Theorem \ref{Approximated_MSE_Theorem}, $\|\hat{\sigma}^2_{\tau} - \sigma^2_{\tau}\| _{L_2} \rightarrow 0$ when {$h\to{}0$ and $\Delta/h\rightarrow 0$}.
\end{corollary}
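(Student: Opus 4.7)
The plan is to derive the corollary as a straightforward consequence of Theorem \ref{Approximated_MSE_Theorem}. First I would note that $\|\hat{\sigma}^2_\tau - \sigma^2_\tau\|_{L_2}^2 = \mathbb{E}[(\hat{\sigma}^2_\tau - \sigma^2_\tau)^2] = \mathrm{MSE}_{\tau, n, h}$, so it suffices to show that the MSE vanishes under the stated double-limit regime $h \to 0$ and $\Delta/h \to 0$.

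Next, I would invoke the expansion (\ref{Approximated_MSE_Formula}) from Theorem \ref{Approximated_MSE_Theorem}, which decomposes the MSE as a sum of a ``variance-type'' term of order $\Delta/h$, a ``bias-type'' term of order $h^\gamma$, and error terms $o(\Delta/h) + o(h^\gamma)$. For the first term $2(\Delta/h)\,\mathbb{E}[\sigma_\tau^4]\int K^2(x)\,dx$, I would argue that $\mathbb{E}[\sigma_\tau^4] \le M_T < \infty$ by Assumption \ref{Boundedness_Condition}, and that $\int K^2(x)\,dx < \infty$ since Assumption \ref{AdmissibleKernel}(2) ensures $K$ is bounded on any compact set while (3)(ii) yields $|K(x)| \lesssim |x|^{-(\gamma+1)}$ for large $|x|$, so $K^2$ is integrable at infinity (as $2\gamma + 2 > 1$). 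The assumption $\Delta/h \to 0$ then drives this term to $0$.

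For the second term $h^\gamma L(\tau) \iint K(x)K(y) C_\gamma(x,y)\,dx\,dy$, I would verify finiteness of the double integral as follows: homogeneity (\ref{HomoFunction}) together with continuity of $C_\gamma$ on the unit sphere gives a bound $|C_\gamma(x,y)| \le M(|x|^\gamma + |y|^\gamma)$, hence
\begin{equation*}
\iint |K(x)||K(y)||C_\gamma(x,y)|\,dx\,dy \le 2M \left(\int |K(x)|\,dx\right)\left(\int |K(y)||y|^\gamma\,dy\right),
\end{equation*}
which is finite by Assumption \ref{AdmissibleKernel}(3)(i) combined with the tail bound from (3)(ii) (ensuring $K \in L^1$). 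Since $\gamma > 0$ and $h \to 0$, this term also vanishes. The error terms $o(\Delta/h) + o(h^\gamma)$ are by definition negligible compared to $\Delta/h$ and $h^\gamma$, both of which tend to $0$. Combining these observations yields $\mathrm{MSE}_{\tau,n,h} \to 0$, establishing the $L_2$ convergence.

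There is really no substantial obstacle here: the work was already done in proving Theorem \ref{Approximated_MSE_Theorem}. The only minor care needed is in confirming that the constants $\int K^2$ and the double integral of $K\otimes K$ against $C_\gamma$ are indeed finite, both of which follow directly from the decay and integrability assumptions imposed on $K$ in Assumption \ref{AdmissibleKernel}.
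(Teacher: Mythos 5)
Your proof is correct and follows the same route as the paper, which states the corollary as a direct consequence of Theorem \ref{Approximated_MSE_Theorem} without further argument: the MSE expansion has both leading terms vanishing when $h\to 0$ and $\Delta/h\to 0$, and the constants are finite under Assumptions \ref{Boundedness_Condition} and \ref{AdmissibleKernel}. Your additional verification that $\int K^2(x)\,dx$ and $\iint K(x)K(y)C_\gamma(x,y)\,dx\,dy$ are finite is a sensible elaboration of details the paper leaves implicit.
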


It is not very hard to see from the previous proof that all $o(\cdot)$ terms are uniform for $\tau \in (0, T)$ {if the condition given by \eqref{SCDefinitionGeneral} is satisfied uniformly in $t$}, {and,} therefore, the following explicit asymptotic expansion for the {integrated mean-squared error (IMSE)} holds.
\begin{corollary}\label{Approximated_IMSE_Corollary}
For {the} model (\ref{Asset_Dynamic}) with $\mu$ and $\sigma$ satisfying Assumptions \ref{IndependentCondition}, \ref{Boundedness_Condition}, and \ref{VolatilityConditionGeneral}, {so that the term  $o ((r^2 + s^2)^{\gamma/2})$ in Eq.~(\ref{SCDefinitionGeneral}) is uniform in $t$,} and a kernel function {$K$ satisfying Assumption} \ref{AdmissibleKernel}, we have, for any $0 < a < b < T$,
\begin{equation}\label{Approximated_IMSE_Formula}
\begin{split}
\mbox{IMSE}_{n, h}
& :=
\int_a^b \mathbb{E}[(\hat{\sigma}^2_{t} - \sigma^2_{t} )^2] dt \\&
= 
2\frac{\Delta}{h} \int_a^b \mathbb{E}[\sigma_{t}^4] dt \int K^2(x)dx
 +
h^{\gamma} \int_a^b L(t) dt \iint K(x)K(y) C_\gamma (x,y) dxdy 
+
o \left(\frac{\Delta}{h}\right) + o (h^{\gamma}).
\end{split}
\end{equation}
\end{corollary}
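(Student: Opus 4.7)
The plan is to obtain the IMSE expansion by integrating the pointwise MSE expansion from Theorem \ref{Approximated_MSE_Theorem} over $\tau\in[a,b]$, with the crucial observation that, under the uniform version of \eqref{SCDefinitionGeneral}, every remainder term in that proof can be shown to be uniform in $\tau\in[a,b]$. Since $[a,b]$ is compact and strictly inside $(0,T)$, the uniform convergence will allow us to exchange integration and the $o(\cdot)$ bounds.

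First, I would revisit the derivation leading to Theorem \ref{Approximated_MSE_Theorem}, step by step, replacing each pointwise $o(\cdot)$ statement with its uniform analogue. The key tools are Lemmas \ref{D1Lemma} and \ref{D2Lemma}, both of which were explicitly stated to yield uniform approximations over $\tau\in(0,T)$ whenever condition (i) of Lemma \ref{D1Lemma} holds uniformly in $\tau$. That uniform hypothesis is precisely the assumption imposed in the corollary (the remainder in \eqref{SCDefinitionGeneral} being uniform in $t$), so in particular it propagates to the three concrete functions $f(t)\equiv 1$, $f(t)=\sqrt{\mathbb{E}[(\sigma_t^2-\sigma_\tau^2)^2]\mathbb{E}[\sigma_\tau^4]}$, and $f(t)=\mathbb{E}[(\sigma_t^2-\sigma_\tau^2)^2]$ used to estimate $V_1$, and to the integrand $\mathbb{E}[(\sigma_t^2-\sigma_\tau^2)(\sigma_s^2-\sigma_\tau^2)]$ used for $V_2$. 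The uniform boundedness of $\mathbb{E}[\sigma_\tau^4]$ (Assumption \ref{Boundedness_Condition}) and of $L(\tau)$ on the compact set $[a,b]$ (locally bounded by assumption) provides the uniform constants needed to absorb the pointwise error bounds.

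Once the uniform pointwise expansion
\[
\mbox{MSE}_{\tau,n,h} = 2\tfrac{\Delta}{h}\mathbb{E}[\sigma_\tau^4]\!\int K^2(x)dx + h^\gamma L(\tau)\!\iint K(x)K(y)C_\gamma(x,y)\,dxdy + R_{n,h}(\tau)
\]
is in hand with $\sup_{\tau\in[a,b]}|R_{n,h}(\tau)|=o(\Delta/h)+o(h^\gamma)$, the conclusion follows by integrating over $\tau\in[a,b]$. The two leading terms, being products of a factor independent of $\tau$ with a $\tau$-dependent moment, integrate to give exactly the expressions
\[
2\tfrac{\Delta}{h}\!\int_a^b\!\mathbb{E}[\sigma_t^4]\,dt\!\int K^2(x)dx \;+\; h^\gamma\!\int_a^b\! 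L(t)\,dt\!\iint K(x)K(y)C_\gamma(x,y)\,dxdy,
\]
while the remainder satisfies $\int_a^b |R_{n,h}(\tau)|\,d\tau\le (b-a)\sup_{\tau\in[a,b]}|R_{n,h}(\tau)| = o(\Delta/h)+o(h^\gamma)$. To invoke Fubini and legitimize the interchange $\int_a^b \mathbb{E}[\cdot]\,dt = \mathbb{E}\int_a^b[\cdot]dt$ implicit in defining IMSE, the non-negativity of the integrand $(\hat\sigma_t^2-\sigma_t^2)^2$ suffices.

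The main obstacle, in my view, will be carefully tracking the uniformity through the specific auxiliary estimates in the $V_1$ computation. In the pointwise proof the bounds on $\sum K_h^2(t_{i-1}-\tau)B_i$ and $\sum K_h^2(t_{i-1}-\tau)C_i$ relied on pointwise Cauchy–Schwarz together with the estimate $\mathbb{E}[(\sigma_t^2-\sigma_\tau^2)^2]=O(|t-\tau|^\gamma)$; to make this uniform I would verify that the implicit constant in that $O$-bound can be taken independent of $\tau\in[a,b]$, which follows directly from the uniform hypothesis on \eqref{SCDefinitionGeneral} (taking $r=s=t-\tau$) together with the uniform boundedness of $L(\tau)$ on $[a,b]$. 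The analogous check for the contribution $\sum K_h(t_{i-1}-\tau)\Delta - 1$ and for the pre-processing step leading from \eqref{MSE_Deduction0} to \eqref{MSE_Deduction1} is immediate from the uniform versions of Lemmas \ref{D1Lemma} and \ref{D2Lemma} applied to the constant function $f\equiv 1$, requiring no additional hypotheses beyond those already supplied.
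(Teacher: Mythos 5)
Your proposal is correct and follows essentially the same route as the paper, which simply observes that under the uniform version of \eqref{SCDefinitionGeneral} all $o(\cdot)$ remainders in the proof of Theorem \ref{Approximated_MSE_Theorem} become uniform in $\tau$ (via the uniform statements of Lemmas \ref{D1Lemma} and \ref{D2Lemma}) and then integrates the pointwise expansion over $[a,b]$. Your additional care in tracking the uniformity of the $B_i$, $C_i$ bounds and of $\mathbb{E}[(\sigma_t^2-\sigma_\tau^2)^2]=O(|t-\tau|^\gamma)$ is exactly the verification the paper leaves implicit.
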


\subsection{Approximated Optimal Bandwidth}\label{Subsection_Approximated_Optimal_Bandwidth}
Based on the {approximations above}, it is natural to {analyze the behavior of the} approximated MSE of the kernel estimator:
\begin{equation}\label{Approximated_MSE}
\begin{split}
\mbox{MSE}^{a}_{\tau, n, h}
:=
2\frac{\Delta}{h} \mathbb{E}[\sigma_{\tau}^4] \int K^2(x)dx
 +
h^{\gamma} L(\tau) \iint K(x)K(y) C_\gamma (x,y) dxdy  .
\end{split}
\end{equation}
Correspondingly, the approximated IMSE of the kernel estimator is defined {as}
\begin{equation}\label{Approximated_IMSE}
\begin{split}
\mbox{MSE}^{a}_{n, h} (a, b)
:=
2\frac{\Delta}{h} \int_a^b \mathbb{E}[\sigma_{t}^4] dt \int K^2(x)dx
 +
h^{\gamma} \int_a^b L(t)dt \iint K(x)K(y) C_\gamma (x,y) dxdy  .
\end{split}
\end{equation}

{Obviously, $2\mathbb{E}[\sigma_{\tau}^4] \int K^2(x)dx > 0$, while, by Assumption \ref{AdmissibleKernel}, we also have} that $L(\tau) \iint K(x)K(y) C_\gamma (x,y) dxdy > 0$.
{We then obtain} the following approximated optimal bandwidth:
\begin{proposition}\label{OptimalBandwidthProposition}
With the same assumptions as Theorem \ref{Approximated_MSE_Theorem}, the approximated optimal bandwidth, denoted by $h^{a, opt}_n$, which is defined to minimize the approximated MSE {defined in Eq.~}(\ref{Approximated_MSE}){,} is given by
\begin{equation}\label{OptimalBandwidth}
\begin{split}
h^{a, opt}_n = &
n^{-1/(\gamma + 1)} \left[ \frac{2 T \mathbb{E}[\sigma_{\tau}^4] \int K^2(x)dx} {\gamma L(\tau) \iint K(x)K(y) C_\gamma (x,y) dxdy} \right] ^{1/(\gamma + 1)},
\end{split}
\end{equation}
while the {attained global minimum of the} approximated MSE is given by
\begin{equation}\label{OptimalMSE}
\begin{split}
\mbox{MSE}^{a, opt}_n
& =
n^{-\gamma/(1+\gamma)} \left( 1 + \frac{1}{\gamma} \right)
\left( 2 T \mathbb{E}[\sigma_{\tau}^4] \int K^2(x)dx \right) ^{\gamma/(1+\gamma)} 
\\&\quad
\times \left( \gamma L(\tau) \iint K(x)K(y) C_\gamma (x,y) dxdy \right) ^{1/(1+\gamma)}.
\end{split}
\end{equation}
\end{proposition}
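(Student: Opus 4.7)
The proof is an elementary one-variable calculus minimization, so the plan is primarily bookkeeping with exponents.

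First, I would introduce the shorthand
\[
A := 2T\,\mathbb{E}[\sigma_\tau^4]\!\int K^2(x)\,dx, \qquad B := L(\tau)\!\iint K(x)K(y)\,C_\gamma(x,y)\,dx\,dy,
\]
so that, using $\Delta = T/n$, the approximated MSE defined in (\ref{Approximated_MSE}) becomes
\[
\mbox{MSE}^{a}_{\tau,n,h} \;=\; \frac{A}{n\,h} \;+\; B\,h^{\gamma}.
\]
The crucial point, which I would state explicitly, is that $A>0$ (trivially, since $\int K^2 dx>0$ and $\mathbb{E}[\sigma_\tau^4]>0$) and $B>0$ by condition~(4) of Assumption \ref{AdmissibleKernel} together with Proposition \ref{Proposition_Nonnegative_Definiteness_of_C_Gamma}. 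Thus the two terms have opposite monotonicities in $h$, and the objective is a smooth strictly convex function on $(0,\infty)$ that tends to $+\infty$ at both endpoints, so a unique global minimizer exists.

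Next, I would differentiate with respect to $h$ and set the derivative to zero:
\[
\frac{d}{dh}\,\mbox{MSE}^{a}_{\tau,n,h} \;=\; -\frac{A}{n\,h^{2}} + \gamma B\,h^{\gamma-1} \;=\; 0,
\]
which yields $h^{\gamma+1} = A/(\gamma B n)$, i.e.
\[
h^{a,\mathrm{opt}}_n \;=\; n^{-1/(\gamma+1)}\!\left(\frac{A}{\gamma B}\right)^{1/(\gamma+1)},
\]
matching (\ref{OptimalBandwidth}) once $A,B$ are expanded. A one-line check of the second derivative (or of the limits at $0$ and $\infty$) confirms this is a minimum.

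Finally, to compute $\mbox{MSE}^{a,\mathrm{opt}}_n$, I would use the first-order condition in the convenient form $A/(nh) = \gamma B h^{\gamma}$ at $h = h^{a,\mathrm{opt}}_n$, which shows that the two summands stand in ratio $\gamma : 1$. Hence
\[
\mbox{MSE}^{a,\mathrm{opt}}_n \;=\; (1+\gamma)\,B\,(h^{a,\mathrm{opt}}_n)^{\gamma} \;=\; \Bigl(1+\tfrac{1}{\gamma}\Bigr)\,\frac{A}{n\,h^{a,\mathrm{opt}}_n}.
\]
Substituting the closed form for $h^{a,\mathrm{opt}}_n$ into the second expression gives
\[
\mbox{MSE}^{a,\mathrm{opt}}_n \;=\; \Bigl(1+\tfrac{1}{\gamma}\Bigr)\, n^{-\gamma/(\gamma+1)}\, A^{\gamma/(\gamma+1)}\,(\gamma B)^{1/(\gamma+1)},
\]
which, after re-expanding $A$ and $B$, is exactly (\ref{OptimalMSE}). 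There is no real obstacle here; the only thing worth being careful about is keeping the exponents $\gamma/(\gamma+1)$ and $1/(\gamma+1)$ straight when converting between the two equivalent ways of writing the minimum value.
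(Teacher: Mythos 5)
Your proposal is correct and coincides with the paper's (implicit) argument: the paper states Proposition \ref{OptimalBandwidthProposition} without a written proof, relying on exactly this elementary minimization of $A/(nh)+Bh^{\gamma}$ after observing that both coefficients are strictly positive (positivity of $B$ being guaranteed by condition (4) of Assumption \ref{AdmissibleKernel}). One small caveat: the objective is not strictly convex on all of $(0,\infty)$ when $\gamma\in(0,1)$ (the second derivative eventually turns negative), but your parenthetical fallback — the derivative $-A/(nh^{2})+\gamma Bh^{\gamma-1}$ changes sign exactly once, from negative to positive, at the unique root of $h^{\gamma+1}=A/(\gamma Bn)$, together with the divergence of the objective at both endpoints — already gives the global minimum, so the conclusion stands.
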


A direct yet considerably important consequence of Theorem \ref{Approximated_MSE_Theorem} and Proposition \ref{OptimalBandwidthProposition} is the following proposition about the optimal convergence rate. {This provides a} rigorous justification of the optimal convergence rate of the kernel estimator. It is worth mentioning that (4) of Assumption \ref{AdmissibleKernel} is necessary for this proposition.
\begin{proposition}\label{prop:optimal_convergence_rate_spot}
With the same assumptions as those in Theorem \ref{Approximated_MSE_Theorem}, the optimal convergence rate of the kernel estimator is given by $n^{-\gamma/(1 + \gamma)}$. This is attainable if the bandwidth is selected to be $h = O(n^{-1/(\gamma + 1)})$.
\end{proposition}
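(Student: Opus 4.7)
The plan rests on Theorem \ref{Approximated_MSE_Theorem}, which provides the leading-order expansion
\[
\mathrm{MSE}_{n,h} \;=\; A\,\frac{\Delta}{h} \;+\; B\,h^{\gamma} \;+\; o\!\left(\frac{\Delta}{h}\right) \;+\; o(h^{\gamma}),
\]
with $A := 2\,\mathbb{E}[\sigma_{\tau}^{4}]\int K^{2}(x)\,dx > 0$ and $B := L(\tau)\iint K(x)K(y)C_{\gamma}(x,y)\,dx\,dy > 0$. The strict positivity of $B$ is exactly what condition (4) of Assumption \ref{AdmissibleKernel} guarantees, and this is precisely why that condition is indispensable, as remarked immediately before the statement. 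Given this expansion, I would establish the two matching halves of the claim: attainability of the rate $n^{-\gamma/(1+\gamma)}$ by the prescribed bandwidth, and the impossibility of any faster rate for \emph{any} admissible choice.

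For attainability, I would substitute $h_{n} := c\,n^{-1/(\gamma+1)}$, with any fixed $c>0$, into the expansion above. Then $\Delta/h_{n} = (T/c)\,n^{-\gamma/(\gamma+1)}$ and $h_{n}^{\gamma} = c^{\gamma}\,n^{-\gamma/(\gamma+1)}$, so both principal terms are of order $n^{-\gamma/(1+\gamma)}$ while the remainders are strictly smaller. This yields $\mathrm{MSE}_{n,h_{n}} = O(n^{-\gamma/(1+\gamma)})$, and choosing $c$ as in Proposition \ref{OptimalBandwidthProposition} in fact recovers the sharp constant exhibited in (\ref{OptimalMSE}).

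For the lower bound, I would take an arbitrary bandwidth sequence $\{h_{n}\}_{n\geq 1}$ and show $\mathrm{MSE}_{n,h_{n}} = \Omega(n^{-\gamma/(1+\gamma)})$. If $h_{n}\not\to 0$ or $\Delta/h_{n}\not\to 0$, then the estimator is inconsistent by Corollary \ref{Corollary_Convergence_of_Estimator}, so the MSE remains bounded away from $0$ and is therefore asymptotically worse than $n^{-\gamma/(1+\gamma)}$. Otherwise, Theorem \ref{Approximated_MSE_Theorem} applies and gives
\[
\mathrm{MSE}_{n,h_{n}} \;=\; (1+o(1))\bigl(A\Delta/h_{n} + B h_{n}^{\gamma}\bigr) \;\geq\; (1+o(1))\,\min_{h>0}\bigl(A\Delta/h + B h^{\gamma}\bigr).
\]
A one-variable calculus exercise (the minimum is attained at $h^{\ast} = (A\Delta/(\gamma B))^{1/(\gamma+1)}$) evaluates the minimum to $(1+\gamma^{-1})(A\Delta)^{\gamma/(\gamma+1)}(\gamma B)^{1/(\gamma+1)}$, which is of order $\Delta^{\gamma/(\gamma+1)} = \Theta(n^{-\gamma/(1+\gamma)})$ since $A,B$ do not depend on $n$.

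The only delicate point is to rewrite $o(\Delta/h_{n}) + o(h_{n}^{\gamma})$ as a multiplicative factor $(1+o(1))$ of the main term $A\Delta/h_{n}+Bh_{n}^{\gamma}$, so as to pass to a uniform lower bound over the sequence $\{h_{n}\}$; this is straightforward since $A,B$ are positive constants. Combining the upper and lower bounds proves both assertions simultaneously: the rate $n^{-\gamma/(1+\gamma)}$ is the best attainable, and it is achieved precisely when $h \asymp n^{-1/(\gamma+1)}$.
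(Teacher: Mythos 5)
Your attainability argument is correct and is essentially the paper's own: the paper treats the proposition as a direct consequence of Theorem \ref{Approximated_MSE_Theorem} and Proposition \ref{OptimalBandwidthProposition}, i.e.\ one substitutes $h_n\asymp n^{-1/(\gamma+1)}$ into the expansion (or simply reads off (\ref{OptimalMSE})) and observes that both leading terms are of order $n^{-\gamma/(1+\gamma)}$. The part of your lower bound that stays inside the regime $h_n\to 0$, $\Delta/h_n\to 0$ is also fine: there the expansion can legitimately be written as $(1+o(1))\bigl(A\Delta/h_n+Bh_n^{\gamma}\bigr)$ (since $A,B>0$, with $B>0$ coming from condition (4) of Assumption \ref{AdmissibleKernel}), and minimizing $h\mapsto A\Delta/h+Bh^{\gamma}$ is exactly the computation behind Proposition \ref{OptimalBandwidthProposition}.

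The gap is in how you dispose of the remaining bandwidth sequences. You claim that if $h_n\not\to 0$ or $\Delta/h_n\not\to 0$ then the estimator is inconsistent ``by Corollary \ref{Corollary_Convergence_of_Estimator}''; that corollary states only the \emph{sufficiency} of $h\to 0$ and $\Delta/h\to 0$ for $L_2$-consistency, not the converse. The converse is Proposition \ref{Finite_Sample_Property} in Appendix \ref{Sec:Equivalence}, and it requires the additional complexity Assumption \ref{Complexity_of_Parameters}, which is \emph{not} among the hypotheses of Theorem \ref{Approximated_MSE_Theorem} and hence is not available to you here. The paper's example in Appendix \ref{Sec:Equivalence} (with $\sigma_t^2=\sigma_0^2+\xi\sigma_0^2\sin(2\pi t/T)$), where a non-vanishing bandwidth achieves the faster rate $O(n^{-1})$, shows that this converse genuinely fails without the extra assumption, so the degenerate regimes cannot be excluded from the stated hypotheses alone. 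Consequently, your argument proves the proposition only in the sense the paper intends it --- optimality of the rate within the regime where the expansion of Theorem \ref{Approximated_MSE_Theorem} is valid; the unrestricted claim over all bandwidth sequences requires adding Assumption \ref{Complexity_of_Parameters} and invoking Proposition \ref{Finite_Sample_Property} together with Theorem \ref{Optimal_Bandwidth_vs_True_Optimal}.
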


Corresponding to Corollary \ref{Approximated_IMSE_Corollary}, we have the following proposition for {the approximated ``uniform"} optimal bandwidth that minimizes {the} approximated IMSE.

\begin{proposition}\label{OptimalBandwidthGlobalProposition}
With the same assumptions as Corollary \ref{Approximated_IMSE_Corollary}, the approximated {optimal} homogeneous bandwidth, denoted by $\bar{h}^{a, opt}_n$, which is defined to minimize the approximated IMSE given by (\ref{Approximated_IMSE}){,} is given by
\begin{equation}\label{OptimalBandwidthGlobal}
\begin{split}
\bar{h}^{a, opt}_n = &
n^{-1/(\gamma + 1)} \left[ \frac{2 T \int_a^b \mathbb{E}[\sigma_{t}^4] dt \int K^2(x)dx} {\gamma \int_a^b L(t) dt \iint K(x)K(y) C_\gamma (x,y) dxdy} \right] ^{1/(\gamma + 1)},
\end{split}
\end{equation}
while the {attained} minimum of the approximated IMSE is given by
\begin{equation}\label{OptimalMSEGlobal}
\begin{split}
\mbox{IMSE}^{a, opt}_n (a, b)
& 
=
n^{-\gamma/(1+\gamma)} \left( 1 + \frac{1}{\gamma} \right)
\left( 2 T \int_a^b \mathbb{E}[\sigma_{t}^4] dt \int K^2(x)dx \right) ^{\gamma/(1+\gamma)} 
\\&\quad
\times \left( \gamma \int_a^b L(t) dt \iint K(x)K(y) C_\gamma (x,y) dxdy \right) ^{1/(1+\gamma)}.
\end{split}
\end{equation}
\end{proposition}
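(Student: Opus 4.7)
The plan is to treat Proposition \ref{OptimalBandwidthGlobalProposition} as a direct calculus optimization over $h>0$, since Corollary \ref{Approximated_IMSE_Corollary} supplies the closed-form objective $\mbox{MSE}^a_{n,h}(a,b)$ and Proposition \ref{OptimalBandwidthProposition} has already shown how to carry out the analogous argument for the pointwise MSE. Concretely, for brevity I would introduce the constants
\begin{equation*}
P := 2\int_a^b \mathbb{E}[\sigma_t^4]\,dt \cdot \int K^2(x)\,dx, \qquad
Q := \int_a^b L(t)\,dt \cdot \iint K(x)K(y) C_\gamma(x,y)\,dx\,dy,
\end{equation*}
and recall from Assumption \ref{AdmissibleKernel}(4) and the positivity of $\mathbb{E}[\sigma_t^4]$ and $L(t)$ (which is inherited from the non-negative definiteness in Proposition \ref{Proposition_Nonnegative_Definiteness_of_C_Gamma} and the assumption $\mathbb{E}[\sigma_t^4]<M_T$) that $P,Q>0$. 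Using $\Delta = T/n$, the approximated IMSE in (\ref{Approximated_IMSE}) then becomes the one-variable function $\varphi(h) := \frac{T}{n}\cdot\frac{P}{h} + Q\, h^{\gamma}$.

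Step one: I would compute $\varphi'(h) = -\frac{TP}{n h^{2}} + \gamma Q h^{\gamma-1}$. Setting $\varphi'(h)=0$ gives $h^{\gamma+1} = \frac{TP}{n\gamma Q}$, hence
\begin{equation*}
\bar{h}^{a,opt}_n \;=\; n^{-1/(\gamma+1)} \left[\frac{TP}{\gamma Q}\right]^{1/(\gamma+1)},
\end{equation*}
which, after substituting $P$ and $Q$, matches the formula (\ref{OptimalBandwidthGlobal}) claimed. Existence and uniqueness of the minimizer follow from $\varphi(h)\to\infty$ as $h\to 0^+$ and as $h\to\infty$ (using $\gamma>0$), plus $\varphi''(h) = 2TP/(nh^3) + \gamma(\gamma-1)Q h^{\gamma-2}$, which together with convexity-on-the-critical-point check (or the sign analysis of $\varphi'$) confirms this is a global minimum on $(0,\infty)$.

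Step two: I would plug $\bar h := \bar{h}^{a,opt}_n$ back into $\varphi$. Using the first-order condition $TP/(n\bar h) = \gamma Q \bar h^{\gamma}$, the two summands collapse neatly:
\begin{equation*}
\varphi(\bar h) \;=\; \gamma Q \bar h^{\gamma} + Q \bar h^{\gamma} \;=\; (\gamma+1) Q \bar h^{\gamma} \;=\; \left(1+\tfrac{1}{\gamma}\right) \gamma Q \bar h^{\gamma}.
\end{equation*}
Since $\bar h^{\gamma} = \left(\frac{TP}{n\gamma Q}\right)^{\gamma/(\gamma+1)} = n^{-\gamma/(\gamma+1)} (TP)^{\gamma/(\gamma+1)} (\gamma Q)^{-\gamma/(\gamma+1)}$, we obtain
\begin{equation*}
\varphi(\bar h) = n^{-\gamma/(\gamma+1)}\left(1+\tfrac{1}{\gamma}\right) (TP)^{\gamma/(\gamma+1)} (\gamma Q)^{1-\gamma/(\gamma+1)} = n^{-\gamma/(\gamma+1)}\left(1+\tfrac{1}{\gamma}\right) (TP)^{\gamma/(\gamma+1)} (\gamma Q)^{1/(\gamma+1)},
\end{equation*}
which is exactly (\ref{OptimalMSEGlobal}) after substituting back $P$ and $Q$.

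There is no real obstacle here; the only thing to be careful about is the bookkeeping of exponents when splitting $(\gamma Q)^{1-\gamma/(\gamma+1)} = (\gamma Q)^{1/(\gamma+1)}$ and matching the desired form. The argument is completely parallel to the pointwise version (Proposition \ref{OptimalBandwidthProposition}), the only difference being that $\mathbb{E}[\sigma_\tau^4]$ and $L(\tau)$ are replaced by their integrals over $[a,b]$, a substitution that is legitimate because, under the uniformity assumption on (\ref{SCDefinitionGeneral}), Corollary \ref{Approximated_IMSE_Corollary} provides the requisite uniform expansion of the MSE that can be integrated termwise.
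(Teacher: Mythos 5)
Your proof is correct and is exactly the routine one-variable calculus minimization that the paper leaves implicit (it states both Proposition \ref{OptimalBandwidthProposition} and Proposition \ref{OptimalBandwidthGlobalProposition} without proof, as immediate consequences of the form $A\Delta/h + Bh^{\gamma}$ of the approximated IMSE). Your first-order condition, sign analysis of $\varphi'$ for global optimality, and the exponent bookkeeping when substituting back all check out, and the positivity of the constants $P$ and $Q$ is justified the same way the paper justifies it in the pointwise case.
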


{It is worthwhile to draw some} connections with \cite{kristensen2010nonparametric} by considering the case of $\gamma = 2$, {which corresponds to a deterministic variance function $\sigma_t^2 = f(t)$ that is continuously differentiable at $\tau$ and such that} $f^\prime(\tau) \neq 0$. 
{In that case, the} approximated MSE (\ref{Approximated_MSE}) is given by
\begin{equation}\label{Approximated_MSE_gamma_two}
\begin{split}
\mbox{MSE}^{a}_{\tau, n, h}
=
2\frac{\Delta}{h} f^2(\tau) \int K^2(x)dx
 +
\left( hf^\prime (\tau) \int K(x)x dx \right)^2,
\end{split}
\end{equation}
{which coincides with the approximation obtained} in \cite{kristensen2010nonparametric}. 
{However, it is evident that, in the case} that the volatility is stochastic and non-smooth, our results are different from those in \cite{kristensen2010nonparametric}.
In Section \ref{section_optimal_kernel_deterministic}, we will see that in the case of deterministic and smooth volatility, we are able to use {``higher order" kernels} to improve the rate of convergence of the kernel estimator, but in other situations, for example BM type volatility, this is not possible. This is one of the major difference between our work and \cite{kristensen2010nonparametric}. Intuitively, this is due to the assumption of a stochastic volatility model, which in reality is more reasonable.

An important problem is to {formalize} the connection between the approximate optimal bandwidth  $h^{a,opt}_n$ and {the ``true"} optimal bandwidth, whenever it exists, which is denoted by $h^*_n$  and is {defined as a value of the bandwidth that minimizes} the actual MSE of the kernel estimator, $MSE_n(h) = \mathbb{E}[(\hat{\sigma}^2_{\tau,n,h} - \sigma^2_{\tau} )^2]$. In Appendix \ref{Sec:Equivalence}, we show that, under a mild additional condition, they are equivalent in the sense that
\begin{equation*}
\begin{split}
{ h^*_n} & = h^{a,opt}_n + o({h^{a,opt}_n}),\\
{MSE_n}(h^{a,opt}_n) & = \inf_h {MSE_n(h)} + o(\inf_h {MSE_n(h)}).
\end{split}
\end{equation*}

\section{Kernel Function Selection}\label{sec:kernel_function_selection}

{As an important application of the well-posed optimal bandwidth selection problem defined in Section \ref{section_bandwidth_selection}, we now proceed to consider the problem of selecting an optimal kernel function. Although the theoretical optimal convergence rate can be attained with {a bandwidth of the form} $h_n = C n^{-1/(\gamma + 1)}$, and we indeed obtained the coefficient $C$ that optimizes the first order approximation of the MSE of the kernel estimator for a given kernel function $K$, we can achieve {further} variance reduction by choosing an appropriate kernel function. This is particularly important for finite sample settings encountered in practice.

As shown by (\ref{OptimalMSE}), 
the optimal kernel function {only depends} on the covariance structure, $C_\gamma(\cdot, \cdot)$. 
There are two possible situations}. The first one is when $C_\gamma$ is positive definite. In such a case, we cannot improve the rate of convergence of the MSE, but we can minimize the constant {appearing} before the asymptotic MSE and IMSE. Another situation is when $C_\gamma$ is {simply non-negative definite}. In such a case, if we relax (4) of Assumption \ref{AdmissibleKernel}, it is possible to improve the rate of convergence {of the MSE} by choosing a so-called ``higher order" kernel function. 

{More concretely, in} this section, we consider three different cases. The first case, which is of fundamental importance in {finance}, is when $\gamma = 1$ and $C_\gamma(r,s) = 1_{\{rs>0\}} \min (|r|,|s|)$ {(Brownian driven volatilities)}. In such a case, an explicit form of the optimal kernel function can be obtained and an efficient algorithm is available for its implementation. The second case is when the covariance structure is given by (\ref{fBM_Vol_Cov_Structure}) with $\gamma \in (1, 2)$, {which can be obtained, for instance, when the volatility is driven by long-memory fBm's}. The final case is when $\gamma = 2$ and $C_\gamma(r,s) = rs$ {(e.g., deterministic smooth volatilities)}. Such a covariance structure is not positive definite, so it {will be} possible to use ``higher order" kernels to improve the rate of convergence.

\subsection{Optimal Kernel Selection {for a BM driven Volatility}}\label{Section_Optimal_Kernel_BM_Vol}
In this part, we consider the first case, i.e. the BM type volatility with $\gamma = 1$ and $C_1(r,s) = 1_{\{rs>0\}} \min (|r|,|s|)$. We will show that the exponential kernel function is the optimal kernel function. 

Exponential kernel function has been shown to be optimal for different problems in previous literature. For example, van Eeden (1985) showed that it is the optimal kernel function for {the} density estimation problem under some {conditions}. Foster and Nelson (1994) {argued} that the exponential kernel is the optimal kernel function to estimate spot volatility, under similar but different assumptions as we have. Their result is more general in the sense that they allow the leverage. However, their proof {lacks rigor}, due to their bounded support assumption on the kernel function. Also they did not draw any connection between optimal bandwidth and optimal kernel, while our results show that the optimal bandwidth and kernel function are jointly optimal.

To start with, from (\ref{OptimalMSE}), the objective function that we need to minimize is the following:
\begin{equation}\label{Object_Function_For_BM_Vol}
\begin{split}
I(K)
= 
\int K^2(x)dx
\int_0^{\infty}\int_0^{\infty} [K(x)K(y) + K(-x)K(-y)] \min(x,y) dxdy ,
\end{split}
\end{equation} 
with the restriction $\int K(x)dx = 1$. Here we notice that $I(K)$ is always positive, as shown by the proof of {Proposition} \ref{BMProcesses}. We divide the problem of minimizing (\ref{Object_Function_For_BM_Vol}) in three steps.

\subsubsection*{Step 1. Symmetric kernel}
Firstly, we claim that we only need to consider symmetric kernel functions. To this end, we prove that $I(K)\geq I(K_{s})$, where {$K_{s}(x) := \left(K(x) + K(-x)\right)/2$}. Indeed, we have $\int K_{s}(x)dx = 1$ and for the first factor of $I(K)$, 
\begin{equation}\label{Optimal_Kernel_Deduction_IK_First_Part}
\begin{split}
\int K_{s}^2(x)dx
\leq
\int \frac{1}{2}(K^2(x) + K^2(-x))dx
=
\int K^2(x)dx .
\end{split}
\end{equation} 
where the equality holds if and only if $K(x) = K(-x)$ for all $x \in \mathbb{R}$, i.e., $K$ is symmetric.

For the second factor of $I(K)$, let $J(K) = \int_0^{\infty}\int_0^{\infty} [K(x)K(y) + K(-x)K(-y)] \min(x,y) dxdy$ and note that
\begin{equation*}
\begin{split}
\quad J(K) - J(K_s)
& = 
\frac{1}{2}
\int_0^{\infty}\int_0^{\infty} [K(x)-K(-x)][K(y)-K(-y)] \min(x,y) dxdy \\
& = 
\frac{1}{2}
\int_0^{\infty}\int_0^{\infty} [K(x)-K(-x)][K(y)-K(-y)] \int_0^{\infty} 1_{\{t\leq x\}}1_{\{t\leq y\}} dt dxdy \\
& = 
\frac{1}{2}
\int_0^{\infty} \int_0^{\infty}\int_0^{\infty} [K(x)-K(-x)][K(y)-K(-y)] 1_{\{t\leq x\}}1_{\{t\leq y\}}  dxdy dt \\
& = 
\frac{1}{2}
\int_0^{\infty} \left[ \int_0^{\infty} [K(x)-K(-x)] 1_{\{t\leq x\}}  dx \right]^2 dt \geq 0.
\end{split}
\end{equation*} 
where the equality holds if and only if $K(x) = K(-x)$ for all $x \in \mathbb{R}$, i.e., $K$ is symmetric.
From here, we see that we only need to consider symmetric kernel functions, and the problem is changed to minimize
$$
\frac{1}{4} I(K)
=
\int_0^{\infty} K^2(x)dx
\int_0^{\infty}\int_0^{\infty} K(x)K(y) \min(x, y) dxdy  .
$$

\subsubsection*{Step 2. Changing to an equivalent optimization problem}

By writing $\min (x, y)$ as $\int_0^{\infty} 1_{\{ u \leq x \}}1_{\{ u \leq y\}} du$, we have
\begin{equation*}
\begin{split}
& \int_0^{\infty}\int_0^{\infty} K(x)K(y) \min(x,y) dxdy 
=
\int_0^{\infty} \left[ \int_{u}^{\infty} K(x) dx \right]^2 du .
\end{split}
\end{equation*} 
We define $L(u) = \int_{u}^{\infty} K(x) dx$ and note that, by definition of $K$, there are only finite many points where $L^{\prime}$ does not exist (note that $K$ is not assumed to be continuous, and at those points, {where} $K$ is not continuous, $L$ is not differentiable, though left and right derivatives exist).
Then, $\frac{1}{4}I(K)$ can be written as
\begin{equation*}
\begin{split}
\frac{1}{4} I(K)
= &
\int_0^{\infty} [L^{\prime}(x)]^2 dx 
\int_0^{\infty} [L(x)]^2 dx
=: I^*(L) ,
\end{split}
\end{equation*} 
and the problem is changed to minimize $I^*(L)$ for functions $L$ with the following restrictions:
\begin{enumerate}[(1)]
\item
$L$ is continuous and piece-wise twice differentiable on $\mathbb{R}_+$.
\item
$L(0) = \frac{1}{2}$ and $\lim_{x\rightarrow +\infty} L(x) = 0$.
\end{enumerate}
Note that the restrictions above are equivalent to the conditions we put on $K$. Since we are not assuming a non-negative kernel function, it is not necessary that $L$ is non-increasing.

\subsubsection*{Step 3. {Derivation} of the exponential kernel}

Using Cauchy-Schwartz inequality, we get
\begin{equation*}
\begin{split}
I^*(L)
\geq &
\left( \int_0^{\infty} L^{\prime}(x)L(x) dx \right)^2
=
\left( \int_{0}^{\infty} L(x) dL(x) \right)^2
=
\left( \int_{1/2}^{0} u du \right)^2
=
\frac{1}{64} ,
\end{split}
\end{equation*} 
where the first inequality becomes equality if and only if there exist non-zero constants $C_1$ and $C_2$ such that
$$
C_1 L'(x) + C_2 L(x) \equiv 0, \quad \mbox{for all } x \in \mathbb{R}_+.
$$
Notice that $L$ is continuous on $\mathbb{R}_+$ and $L(0) = \frac{1}{2} > 0$. So we have two possible cases: (1) there exists $x_0 > 0$, such that $L(x) > 0$, for all $x \in [0, x_0)$ and $L(x_0) = 0$; (2) $L(x) > 0$, for all $x \in \mathbb{R}_+$.

For the first case, we have that for $x \in (0, x_0)$,
$$
\frac{L'(x)}{L(x)} = \frac{-C_2}{C_1},
$$
whose solution is $L(x)=\frac{1}{2}e^{Bx}$. It is then impossible that $L(x_0) = 0$. Therefore, only the second case is possible. By solving the same differential equation, and together with $L(0) = \frac{1}{2}$ and $\lim_{x\rightarrow +\infty} L(x) = 0$, we have 
$$
L(x)=\frac{1}{2}e^{Bx}, \quad B<0.
$$
Therefore, the optimal kernel function is $K(x) = -\frac{B}{2}e^{Bx}$. Here, different values of $B$ do not change the value of $I(K)$, so we can choose $B = 1$ for simplicity.

As a summary, we have the following theorem for the optimal kernel function.
\begin{theorem}\label{OptimalKernelFunctionTheorem}
For {the} model (\ref{Asset_Dynamic}) with $\mu$ and $\sigma$ satisfying Assumptions \ref{IndependentCondition}, \ref{Boundedness_Condition} and \ref{VolatilityConditionGeneral}, where $C_\gamma$ is given by {(\ref{fBM_Vol_Cov_Structure})} with $\gamma = 1$, and for a kernel function $K$ satisfying {Assumption} \ref{AdmissibleKernel}, we have that the optimal kernel function that minimizes the first order approximation of the MSE of the kernel estimator is the exponential kernel function: 
$$K^{opt}(x) = \frac{1}{2} e^{-|x|}, \quad x \in \mathbb{R} .$$
\end{theorem}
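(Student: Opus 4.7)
The plan is to minimize the kernel-dependent factor in the optimal MSE formula from Proposition~\ref{OptimalBandwidthProposition}, namely
$$I(K) = \int K^2(x)\,dx \cdot \iint K(x)K(y)\,C_1(x,y)\,dx\,dy,$$
subject to $\int K(x)\,dx = 1$, where $C_1(x,y) = 1_{\{xy > 0\}}\min(|x|,|y|)$. Because $C_1$ vanishes when $x$ and $y$ have opposite signs, the double integral naturally splits into positive- and negative-half-line contributions. I would proceed in three stages: symmetrization, reduction to a one-dimensional variational problem, and a sharp Cauchy--Schwarz step.

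First, I would argue that it suffices to restrict attention to symmetric kernels. Setting $K_s(x) := (K(x)+K(-x))/2$, normalization is preserved, and the elementary inequality $(a+b)^2 \leq 2(a^2+b^2)$ yields $\int K_s^2 \leq \int K^2$ with equality iff $K$ is symmetric. For the cross term, the key trick is the representation $\min(x,y) = \int_0^\infty 1_{\{t \leq x\}}1_{\{t \leq y\}}\,dt$. Applying Fubini to $J(K) - J(K_s)$, where $J(K)$ denotes the double integral over $\mathbb{R}_+^2$, produces an integrand of the form $\bigl(\int_0^\infty [K(x)-K(-x)]1_{\{t\le x\}}dx\bigr)^2$, i.e.\ a perfect square, so $J(K) \geq J(K_s)$ with equality iff $K$ is symmetric.

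Next, for symmetric $K$, I would introduce the tail transform $L(u) := \int_u^\infty K(x)\,dx$, so that $L(0) = 1/2$, $L(\infty) = 0$, and $L' = -K$ a.e.\ on $(0,\infty)$. Reusing the indicator representation of $\min$ and swapping integration order yields $\int_0^\infty\int_0^\infty K(x)K(y)\min(x,y)\,dx\,dy = \int_0^\infty L(u)^2\,du$, while $\int_0^\infty K^2 = \int_0^\infty (L')^2$. The problem collapses to minimizing
$$I^*(L) = \int_0^\infty \bigl(L'(x)\bigr)^2\,dx \cdot \int_0^\infty L(x)^2\,dx$$
over continuous, piecewise smooth $L$ with the prescribed boundary values.

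The endgame is Cauchy--Schwarz: $I^*(L) \geq \bigl(\int_0^\infty L'(x)L(x)\,dx\bigr)^2 = \bigl(L(0)^2/2\bigr)^2 = 1/64$, with equality iff $L' \propto L$ pointwise, i.e.\ $L$ satisfies a linear first-order ODE. The boundary conditions $L(0) = 1/2$, $L(\infty) = 0$, combined with the requirement that $L$ not vanish at a finite point (since vanishing would force $L \equiv 0$ on the subsequent interval, contradicting the boundary data), force $L(x) = \tfrac{1}{2}e^{Bx}$ with $B < 0$. Hence $K(x) = -(B/2)e^{Bx}$ on $(0,\infty)$; the constant $B$ rescales but does not change $I(K)$, so choosing $B = -1$ and symmetrizing gives $K^{opt}(x) = \tfrac{1}{2}e^{-|x|}$. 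The only delicate point I anticipate is the symmetrization of the $\min$-weighted quadratic form, where the indicator representation is essential to expose the perfect square; everything afterward is either an elementary identity or a direct Cauchy--Schwarz argument with an explicit extremizer.
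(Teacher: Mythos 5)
Your proposal is correct and follows essentially the same route as the paper's proof: symmetrization using the representation $\min(x,y)=\int_0^\infty 1_{\{t\le x\}}1_{\{t\le y\}}\,dt$ to expose a perfect square, reduction via the tail transform $L(u)=\int_u^\infty K(x)\,dx$ to minimizing $\int_0^\infty (L')^2\cdot\int_0^\infty L^2$, and a sharp Cauchy--Schwarz step whose equality case $C_1L'+C_2L\equiv 0$ together with $L(0)=1/2$, $L(\infty)=0$ forces $L(x)=\tfrac12 e^{Bx}$ with $B<0$ and hence the exponential kernel.
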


We now do some calculations and demonstrate to what extent the exponential kernel decreases the MSE.
\begin{example}\label{Example_Different_Kernels}
As we can see from (\ref{OptimalMSE}), $MSE^{a,opt}_n = C \sqrt{I(K)}$, where the constant $C$ does not depend on the kernel function $K$. 
{Below, we show the value of $I(K)$ for the exponential, uniform, triangular, and the Epanechnikov kernels:
\begin{align*}
&I(.5\, e^{-|x|})= \frac{1}{36} \approx 0.0625,\quad 
I(.5\, 1_{\{|x| < 1\}}) = \frac{1}{12} \approx 0.08334,\\
&I( |1 - x| 1_{\{|x| < 1\}}) = \frac{1}{15} \approx 0.066,\quad 
I(.75\, (1 - x^2) 1_{\{|x| < 1\}}) = \frac{297}{4120} \approx 0.072.
\end{align*}
Interestingly} enough, the triangle kernel performs better than Epanechnikov kernel and Epanechnikov kernel performs better than the uniform kernel. The intuition behind this is that a kernel function with a shape more similar to the exponential kernel generally performs better.
\end{example}

\subsection{Efficient Implementation of the {Uniform} and Exponential Kernel}\label{Section_Exponential_Kernel_Algorithm}
In this subsection, we demonstrate that the exponential kernel function not only minimizes the MSE of the kernel estimator, {as shown in the previous subsection,} but also {enables us} to substantially reduce the {computational} complexity of the {volatility estimation}.

{Let us recall that $n$ denotes} the number of observations we have. {In general, the evaluation of $\hat{\sigma}_{\tau}$ for a fixed time $\tau\in(0,T)$ requires $O(n)$ (respectively,  $O(nh)$) computations for a kernel function with unbounded (respectively, bounded) support,} as long as the bandwidth {$h$} has already been fixed. However, if we hope to get an estimation of {the whole discrete skeleton $\{{\sigma}_{t_{i}}\}_{i=0,\dots,n}$ of the volatility, one would then require a time of $O(n^2)$ or $O(n^2h)$ for a general kernel function with unbounded or bounded support, respectively. In particular, if, in addition, we were to use the approximated optimal bandwidth given by (\ref{OptimalBandwidth})}, the best {possible complexity}, which is achieved by kernels with bounded supports, is $O(n^{2-1/(\gamma+1)})$. {This computational time might be substantially long considering our goal to use high frequency data}.

{We now show that, for both the uniform and exponential kernels}, we can do {substantially} better. Indeed, for the uniform kernel and any $\tau \in (0,T - \Delta)$, we can use the idea of moving average as the following:
$$
\hat{\sigma}_{\tau + \Delta ,unif}^2
=
\hat{\sigma}_{\tau,unif}^2
+ \frac{1}{2h} (\Delta_k X)^2
- \frac{1}{2h} (\Delta_j X)^2 ,
$$
where {$
k = \min \{ l : t_{l-1} \geq \tau + h \}$ and 
$j = \min \{ l : t_{l-1} \geq \tau - h \}$.}

For the exponential kernel, we write $K^{exp}_h(x) = \frac{1}{2h} e^{-|x|/h}$ and we {introduce} the following notations:
\begin{equation*}
\begin{split}
\hat{\sigma}_{\tau,-}^2 =  \sum_{i < i_0} K^{exp}_h(t_{i-1} - \tau)(\Delta_i X)^2 ,\quad
\hat{\sigma}_{\tau,*}^2 =  K^{exp}_h(t_{i_0-1} - \tau) (\Delta_{i_0} X)^2 ,\quad
\hat{\sigma}_{\tau,+}^2 =  \sum_{i > i_0} K^{exp}_h(t_{i-1} - \tau)(\Delta_i X)^2 , 
\end{split}
\end{equation*} 
where $t_{i_0 - 1}\leq \tau < t_{i_0}$. {Note that $\hat{\sigma}_{\tau,exp}^2 =  \hat{\sigma}_{\tau,+}^2 + \hat{\sigma}_{\tau,*}^2 + \hat{\sigma}_{\tau,-}^2 $.} Then, by the idea of geometric moving average, we can use the following recurrent algorithm:
\begin{equation}\label{Exponential_Iterate}
\begin{split}
\hat{\sigma}_{\tau + \Delta,-}^2 = & e^{-\Delta/h} \left[ \hat{\sigma}_{\tau,-}^2 + \hat{\sigma}_{\tau,*}^2 \right] , \\
\hat{\sigma}_{\tau + \Delta,*}^2 = & K^{exp}_h(t_{i_0} - (\tau + \Delta)) (\Delta_{i_0 + 1} X)^2 , \\
\hat{\sigma}_{\tau + \Delta,+ }^2  = & e^{\Delta/h} \left[ \hat{\sigma}_{\tau,+}^2 - K^{exp}_h(t_{i_0} - \tau ) (\Delta_{i_0 + 1} X)^2 \right] .
\end{split}
\end{equation}

It is then {clear that,} in order to estimate the ``whole path" of $\sigma_\tau^2$ {using an exponential kernel}, we need a time of $O(n)$, instead of $O(n^2)$ or $O(n^2h)$.
{The} difference between the two time complexities mentioned above is quite significant, since we are considering high frequency data. For example, {suppose that we want to compute the whole discrete skeleton of the volatility for a trading day with data every second so that $n = 23400$}. Let us also assume that we consider volatility process generated by Brownian motion, i.e. $\gamma = 1$. In such case, the recurrent algorithm {described above requires} about $10^4$ {computations, while the standard} algorithm requires about $10^6$ or $10^8$, for kernels with bounded or unbounded supports. Actually the recurrent algorithm is at least about $500 (\approx \sqrt{23400})$ times faster than the naive algorithm.

{In practice, kernel estimators suffer of biases at times closer to the boundary}. As proposed in \cite{kristensen2010nonparametric}, we can correct such boundary {effects} by using the following estimator:
\begin{equation}\label{Kernel_Estimator_with_Boundary_Effect}
\hat{\sigma}^b_{\tau,n,h}
=
\frac{\sum _{i=1}^n K_h(t_{i-1} - \tau) (\Delta_i^n X)^2}
{\Delta \sum _{i=1}^n K_h(t_{i-1} - \tau)} .
\end{equation}
where the superscript denotes boundary effect. We can still implement our fast estimation algorithm to calculate this estimator {since we only need to} calculate $\sum _{i=1}^n K_h(t_{i-1} - \tau)$, which can be calculated similarly as (\ref{Exponential_Iterate}) except that all $(\Delta_i X)^2$ are replaced by $1$.

{Another important problem, usually encountered in high frequency trading, is} to calculate the current spot volatility as quickly as possible, {based on the previous volatility estimate and the newly observed return}. Under such {a} setting, the exponential kernel can perform pretty well in both time and space complexity. Indeed, an ``online" type algorithm can be implemented by setting $\hat{\sigma}_{t_{i}}^2 = \exp(-\Delta /h) \left[ \hat{\sigma}_{t_{i-1}}^2 + (\Delta_i X)^2/(2h) \right]$.
Because of this, in order to update the estimation of the current volatility, we only need $O(1)$ time and space, instead of $O(n)$ or $O(nh)$ for other kernel functions with unbounded or bounded supports.

\subsection{Optimal Kernel Function {for a fBM driven Volatility}}
In this section, {we now consider a general} fBM covariance structure, i.e. $\gamma \in (1, 2)$ and $C_\gamma$ given by (\ref{fBM_Vol_Cov_Structure}). From (\ref{OptimalMSE}), our goal is to minimize
$$
I(K)
= 
\left( \int K^2(x)dx \right)^\gamma
\int \int K(x)K(y) C_\gamma(x,y) dxdy .
$$

Our first step is still to prove that we only need to consider symmetric kernel {functions. In particular, we have that $I(K_{s})\leq{}I(K)$, where $K_{s}(x):=\frac{1}{2}\left(K(x)+K(-x)\right)$. To this end, it is useful to note we can write $C_{\gamma}$ as follows 
$$
C_\gamma (x,y)
=
\int F_\gamma (x,u) F_\gamma (y,u) du .
$$
with $F_\gamma(x, y) := C \left( |x - y|^{\frac{\gamma - 1}{2}} \mbox{sgn}(x - y) + |y|^{\frac{\gamma - 1}{2}} \mbox{sgn}(y) \right)$, for a} certain constant $C$ {(see \cite{positive_definite_kernel} for details)}.
The first {factor} of $I(K)$ {can be treated as in} (\ref{Optimal_Kernel_Deduction_IK_First_Part}){, while, for the second factor,} since $C_\gamma(x,y) = C_\gamma(-x,-y)$, we have
\begin{equation}\label{eq:fBM_Symmetric_Kernel_has_Lower}
\begin{split}
& \iint K(x)K(y) C_\gamma(x,y) dxdy 
-
\iint K_{s}(x)K_{s}(y) C_\gamma(x,y) dxdy \\
&\quad =
\frac{1}{4}
\int \int [K(x)-K(-x)][K(y)-K(-y)] \int F_\gamma(x,u)F_\gamma(y,u) du dxdy \\
 &\quad =
\frac{1}{4}
\int \left[ \int [K(x)-K(-x)] F_\gamma(x,u) dx \right]^2 du \geq 0 .
\end{split}
\end{equation}
Therefore, we only need to consider symmetric kernel functions.

Unfortunately, the problem of finding an explicit form for the optimal kernel function is much more challenging in this case. Therefore, we instead seek for {a numerical method} to find the optimal kernel function. We notice that $C_{\gamma}(x,y) + C_{\gamma}(x, -y) = |x|^\gamma + |y|^\gamma - \frac{1}{2}|x+y|^{\gamma} - \frac{1}{2} |x-y|^{\gamma}$, {for any $x,y > 0$, and, thus,} our goal is then changed to minimize
\begin{equation}\label{Objective_Function_General}
I^*(K)
= 
\left( \int_0^{\infty} K^2(x)dx \right)^\gamma
\int_0^{\infty} \int_0^{\infty} K(x)K(y) D(x, y) dxdy .
\end{equation}
where $D(x, y) = |x|^\gamma + |y|^\gamma - \frac{1}{2}|x+y|^{\gamma} - \frac{1}{2} |x-y|^{\gamma}$.
Since our problem is unchanged with $K(x)$ scaled by {a small} bandwidth, we will limit the support of $K(x)$ to be $[0,1]$. Note that this excludes those kernels with unbounded support. However, since all unbounded support kernels can be approximated, to an arbitrary precision, by a kernel with a bounded support, we will limit our consideration to bounded support kernels at this point.

A way to solve such an optimization problem numerically is to approximate the kernel function $K$ by step functions and then use gradient descent to directly optimize (\ref{Objective_Function_General}). Indeed, the kernel function can be approximated by
$$
K_m (x) = {\frac{1}{\sum_{i=1}^{n}a_{i}}\sum_{i=1}^m a_i 1_{[\frac{i-1}{m},\frac{i}{m})}(x),\quad x\in[0,1],\; a_{i}\in\mathbb{R},\; i=1,\dots,n.}
$$
With such an approximation, {it is then natural to consider the following optimization problem:}
\begin{equation}\label{Numerical_Objective_Function}
\mbox{Minimize }
f(a) :=
\frac{
m^\gamma 
\left( \sum_{i=1}^m a_i^2 \right)^\gamma
\left( \sum_{i=1}^m \sum_{j=1}^m a_i a_j A_{ij} \right)
}
{ \left( \sum_{i=1}^m a_i \right)^{2\gamma + 2} }
\mbox{ for } a = (a_1,...,a_m), a_i \in \mathbb{R},1\leq i \leq m {,}
\end{equation}
{where} $A_{ij} = |x_i|^\gamma + |x_j|^\gamma - \frac{1}{2}|x_i+x_j|^\gamma - \frac{1}{2}|x_i-x_j|^\gamma$, {with} $x_i = {(i-0.5)/m}$, $1\leq i \leq m$.

We notice that this is an optimization problem with high dimensional independent variables. In fact, in order {for the resulting approximated optimal kernel to converge} to the true optimal kernel function, we need {$m\rightarrow \infty$}. However, the numerical optimization problem is still tractable, since the gradient can be calculated explicitly, {which allows us to use a gradient descent method to calculate the optimal kernel function numerically}. 
Of course, 
{there are some practical issues to consider when dealing with gradient descent.} The first problem is that {the method} may yield a local minima, but not the global minima. {In order to alleviate the latter issue, we choose} several initial values randomly {and select} the best final result. Another problem is how to determine the step size for each iteration. There are several standard ways to solve this problem. In our implementation, we first choose a step size that is large enough. If the objective function decreases when walking through the gradient direction {with the selected} step size, we update the point and go on to the next iteration. Otherwise, we shrink the step size to a half.

Figure \ref{Plot_Optimal_Kernel_Functions_With_Different_Gamma} shows the {resulting optimal} kernel functions for $\gamma = 1.0,1.3,1.6,1.9$. {Note that the resulting approximated optimal kernel for $\gamma = 1$ is consistent with true optimal kernel that was proved to be exponential in Section \ref{Section_Optimal_Kernel_BM_Vol}. 
We also} observe from Figure \ref{Plot_Optimal_Kernel_Functions_With_Different_Gamma} that, as $\gamma$ increases, the optimal kernel function becomes flatter and less convex. This indeed makes sense, since a higher $\gamma$ indicates less chaos of the volatility, and thus more weights should be given to data farther from the estimated point.

\begin{figure}[h]
\caption{Optimal Kernel Functions for Different $\gamma$}
\label{Plot_Optimal_Kernel_Functions_With_Different_Gamma}
\centering
   \includegraphics[scale=0.5]{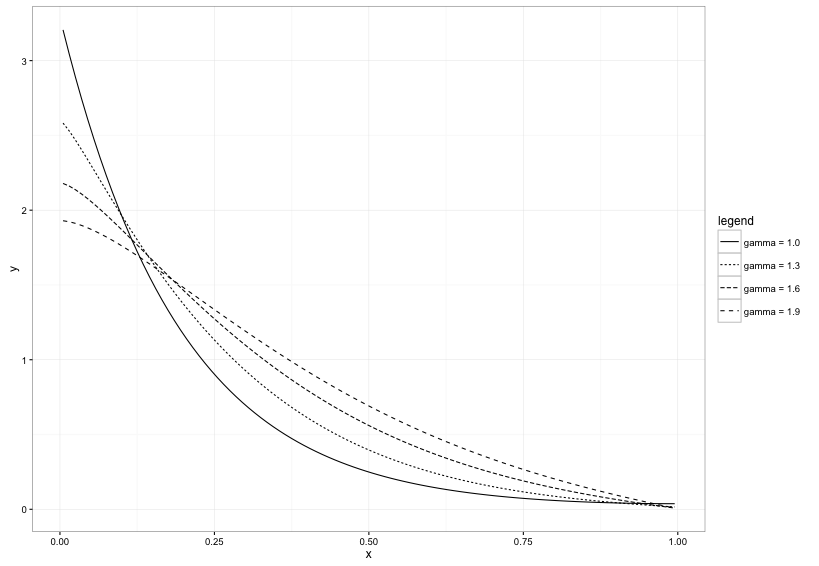}
\end{figure}

\subsection{Optimal Kernel for a Deterministic Volatility Function}\label{section_optimal_kernel_deterministic}
{Lastly, we consider the case $\sigma_{t}^{2}=V_{t}=f(t)$, for a deterministic function $f$. As seen in Subsection 
\ref{SubSect:Deterministic}, we have that $\gamma = 2$} and $C_2(r,s) = rs$. Obviously, such a $C_2$ is not strictly positive definite, so theoretically we can consider ``higher order" kernels to improve the convergence rate {of the estimation MSE. Specifically, we generalize the condition (4) of Assumption \ref{AdmissibleKernel} as follows:
\[
	\int K(x) x^i dx = 0, \quad i = 1, 2, ..., p - 1,\text{ and }\int K(x) x^p dx \neq 0.
\]
Such a kernel is said to be of order $p$. We also extend Assumption \ref{VolatilityConditionGeneral} as follows:}
$$
\mathbb{E} [ (V_{t+r} - V_{t})(V_{t+s} - V_{t}) ] 
= \sum_{i = 1}^{2p - 1} L_i(t)C_{i + 1}(r,s) + o ((r^2 + s^2)^{p}), \quad r,s \rightarrow 0,
$$
{where the function $C_{i}$ is such that} $C_i(hr,hs) = h^{i}C_\gamma(r,s)$, for {any $r, s \in \mathbb{R}$, $h \in \mathbb{R}_+$, and $i\geq{}1$}. {In that case, with a similar procedure as that of} Section \ref{section_bandwidth_selection}, we can prove that the approximated MSE (\ref{Approximated_MSE}) is given by
\begin{equation*}
\begin{split}
\mbox{MSE}^{a}_{\tau, n, h}
=
2\frac{\Delta}{h} \mathbb{E}[\sigma_{\tau}^4] \int K^2(x)dx
 +
h^{2p} \left( f^{(p)} (\tau) {\int} K(x) x^p dx \right)^2  .
\end{split}
\end{equation*}
We can then select {an optimal bandwidth. The optimal bandwidth and corresponding optimal convergence rate are the same as those obtained in \cite{kristensen2010nonparametric}.}

\begin{remark}
For the construction of higher order kernel functions, we refer to Section 1.2.2 in \cite{tsybakov2008nonparametric}. However, as was {already} pointed out by \cite{kristensen2010nonparametric}, ``higher order" kernels cannot be non-negative\footnote{{Indeed, it is not possible to have both $\int x^2 K(x) dx = 0$ and $K(x) \geq 0$, for all $x \in \mathbb{R}$.}} and, thus, in principle, may yield non-positive {estimates of the volatility.  
Therefore, there is some tradeoff} between accuracy and positivity.
\end{remark}

{An interesting application, which was not considered in \cite{kristensen2010nonparametric}, is to find the kernel that minimizes the resulting optimal approximated MSE. Concretely, if we limit ourselves to symmetric kernels of order $p$, the goal is to minimize}
\begin{equation}\label{objective_function_order_p}
\begin{split}
I_p(K) =
\left( \int_0^{\infty} K^2(x)dx \right) ^p \int_0^{\infty} K(x) x^p dx  .
\end{split}
\end{equation}
For such a problem, we {further} limit ourself to kernel function with support $[0, 1]$ and use calculus of variation to derive the optimal kernel function.
For any continuous function $\eta: [0, 1] \rightarrow \mathbb{R}$ {such that $\int_{0}^{1}\eta(x)dx=0$} and a real number $\epsilon$, we consider
\begin{equation*}
\begin{split}
I_p(K + \epsilon \eta) =
\left( \int_0^{\infty} (K(x) + \epsilon \eta(x) )^2 dx \right) ^p \int_0^{\infty} (K(x) + \epsilon \eta(x) ) x^p dx  .
\end{split}
\end{equation*}
{Next,} in order to find a local minimum point of $I(K)$, we take {the} derivative of $I$ with respect to $\epsilon$ {to get}
\begin{equation*}
\begin{split}
\left. \frac{\partial I_p}{\partial \epsilon} \right|_{\epsilon = 0} =
2p \left( \int_0^{\infty} K^2(x) dx \right)^{p - 1} \int_0^{\infty} K(x)\eta(x) dx \int_0^{\infty} K(x)x^p dx
+
\left( \int_0^{\infty} K^2(x) dx \right)^p \int_0^{\infty} \eta(x)x^p dx.
\end{split}
\end{equation*}
Then, we solve $\left. \frac{\partial I_p}{\partial \epsilon} \right|_{\epsilon = 0} = 0$, which is equivalent to solve
\begin{equation*}
\begin{split}
2p \int_0^{\infty} K(x)\eta(x) dx \int_0^{\infty} K(x)x^p dx
+
\int_0^{\infty} K^2(x) dx \int_0^{\infty} \eta(x)x^p dx = 0.
\end{split}
\end{equation*}
In order {for the above to hold} for any $\eta$ {satisfying the stated properties,} $K$ needs to have the form $K(x) = a(1 - bx^p)$ for $a,b > 0$. By plugging {such a $K$ in}, we get
\begin{equation*}
\begin{split}
a \int_0^{\infty} \eta(x)x^p dx \left(-2pb \times \left(\frac{1}{p + 1} - \frac{b}{2p + 1} \right) + 1 - \frac{2b}{p + 1} + \frac{b^2}{2p + 1} \right) = 0.
\end{split}
\end{equation*}
Solving such an equation yields a unique solution $b = 1$ and, by solving {$\int_0^1 K(x)dx = 1/2$,} we can get $a = \frac{2p}{p + 1}$. Therefore, we get a local minimum point of (\ref{objective_function_order_p}) as the following:
$$
K_p(x) = \frac{p + 1}{2p} (1 - |x|^p) 1_{[-1, 1]}.
$$
It is worth mentioning that when $p = 2$, we have $K_2(x) = \frac{3}{4} (1 - |x|^p) 1_{[-1, 1]}$, which is exactly the Epanechnikov kernel.

There is still a problem for such {a kernel}. Take $p = 4$ as an example. Although $K_4$ is a local minimum point of (\ref{objective_function_order_p}), it does not satisfy {$\int_{-1}^1 K(x)x^2dx = 0$}. 
Therefore, {we propose to consider instead the following} optimization problem with {constraints}:
\begin{equation}\label{objective_function_order_p_constraint}
\begin{split}
&\mbox{minimize }I_{2q}(K) =
\left( \int_0^1 K^2(x)dx \right) ^{2q} \int_0^1 K(x) x^{2q} dx, \\
&\mbox{subject to }\int_0^1 K(x) x^{2r} dx = 0, \quad \mbox{for }0 < r < q,
\quad \mbox{and }\int_0^1 K(x) dx = \frac{1}{2}.
\end{split}
\end{equation}
To solve such a problem, we consider {the Lagrangian}
\begin{equation}\label{objective_function_order_p_constraint_lagrangian}
\begin{split}
I_{2q}^c(K) &=
\left( \int_0^1 K^2(x)dx \right) ^{2q} \int_0^1 K(x) x^{2q} dx + \sum_{i = 1}^{q - 1} \lambda_i \left( \int_0^1 K^2(x)dx \right) ^{2q} \int_0^1 K(x) x^{2i} dx\\
&\quad + \lambda_0 \left( \int_0^1 K^2(x)dx \right) ^{2q} \left(\int_0^1 K(x) dx - \frac{1}{2} \right),
\end{split}
\end{equation}
where $\lambda_i$ are Lagrangian multipliers. In order to solve such an optimization problem, we set $\left. \frac{\partial I^c_{2q}(K + \epsilon \eta)}{{\partial\epsilon}}\right|_{\epsilon = 0} = 0$ and $\frac{\partial I^c_{2q}(K + \epsilon \eta)}{{\partial\lambda_i}}= 0$. {After some simplifications, these yield the system of equations:}
\begin{equation*}
\begin{split}
& 4q  \int_0^1 K(x)\eta(x) dx \int_0^1 K(x)x^{2q} dx
+
\int_0^1 K^2(x) dx \int_0^1 \eta(x) \left(x^{2q} + \sum_{i = 0}^{q - 1} \lambda_i x^{2i} \right) dx = 0,
\\
& \int_0^1 K(x) x^{2r} dx = 0, \quad \mbox{for }0 < r < q,\qquad \int_0^1 K(x) dx = \frac{1}{2}.
\end{split}
\end{equation*}
Therefore, {$K$ needs} to take the form $K(x) = a \left(x^{2q} + \sum_{i = 0}^{q - 1} \lambda_i x^{2i}\right)$. Then, by plugging in such a $K$, we get $q + 1$ {equations:
\begin{equation*}
\begin{split}
0&=(4q + 1) a \left( \frac{1}{4q + 1} + \sum_{i = 0}^{q - 1} \lambda_i \frac{1}{2(q + i) + 1} \right)+ \frac{\lambda_0}{2},
\\
0&= \frac{1}{2(q + r) + 1} + \sum_{i = 0}^{q - 1} \lambda_i \frac{1}{2(i + r) + 1}, \quad 0 < r < q,\\
\frac{1}{2}&= a\left( \frac{1}{2q + 1} + \sum_{i = 0}^{q - 1} \lambda_i \frac{1}{2i + 1} \right).
\end{split}
\end{equation*}
Solving} such a system of equations {for $a$ and $\lambda_{0},\dots, \lambda_{q-1}$ provide} us a candidate of global optimal kernel function.

\section{Plug-In Bandwidth Selection Methods}\label{sec:Plug-inBandwidthSelection}

{In this section we propose a feasible plug-in type bandwidth selection algorithm, {for which,} as a sub-problem, we {also develop a {new}} estimator of {the} volatility of volatility based on the kernel estimator of {the} spot volatility and {a type of} two-time scale realized variance estimator.}

{Let start by giving a brief overview of the different methods for bandwidth selections. There are generally two types of methods:} cross-validation and plug-in type methods. In \cite{kristensen2010nonparametric}, a leave-one-out cross validation method for determining the optimal bandwidth is proposed, which does provide good results, as shown by simulations (see Section \ref{Section_SimulationResults} below for further details). The advantage of the cross validation method is its generality and portability across different settings (e.g., different $\gamma$'s and $C_\gamma$'s). However, this method has two main drawbacks. On one hand, the method generally suffers from loss of accuracy. One reason is that the cross-validation method yields a single bandwidth for the whole time period, in spite of the fact that, as seen from {\eqref{OptimalBandwidth}}, the optimal bandwidth varies from time to time. Also, even if we {restrict ourselves to homogeneous bandwidths}, cross validation method is still not as accurate as {a properly implemented} plug-in type method. On the other hand, the cross validation method is computationally expensive since it involves to carry out a numerical optimization scheme to find the bandwidth. {More specifically, as mentioned in Section \ref{Section_Exponential_Kernel_Algorithm},} for each proposed bandwidth $h$, we need $O(n^2)$ steps to calculate the objective function for a general kernel function with unbounded support {(even though, as it was mentioned before, such a complexity can be reduced to $O(n)$ when using an exponential kernel)}. Therefore, even with a good initial guess, it would generally take quite a long time to find a satisfactory bandwidth by cross validation.

{As previously mentioned, we proceed to propose a feasible plug-in} bandwidth selection method based on the explicit formula of the 
{global bandwidth} {\eqref{OptimalBandwidthGlobal}}. The proposed method slightly sacrifices generality for the advantage of higher accuracy and faster speed. We shall focus on the case of {a} BM type volatility, while similar methods can be developed for other types of volatility structures. {For easy reference, let} us recall that the global approximated optimal bandwidth 
takes the form:
\begin{equation}\label{OptimalBandwidthGlobal2}
\begin{split}
\bar{h}^{a, opt}_n = &
\left[ \frac{2 T \mathbb{E}\left[\int_{0}^{T}\sigma_{\tau}^4d\tau\right] \int K^2(x)dx} { n \mathbb{E}\left[\int_{0}^{T}g^{2}(\tau)d\tau\right] \iint K(x)K(y) C_1 (x,y) dxdy} \right] ^{1/2}.
\end{split}
\end{equation}
{To implement (\ref{OptimalBandwidthGlobal2}), it is natural to first} use the integrated quarticity of $X$, $IQ(X) = \int_{0}^{T}\sigma^{4}_{\tau}d\tau$, and the quadratic variation of $\sigma^2$, $IV(\sigma^2) = \int_{0}^{T}g^{2}(\tau)d\tau$, instead of their expected values. {Intuitively, this approach makes the estimator more data adaptive and, in the absence of parametric} constraints, these are the only estimable quantities with only one path. As it is well known, {a popular} estimate for $\int_{0}^{T}\sigma^{4}_{\tau}d\tau$ is the so-called realized quarticity, which is defined by $\widehat{IQ} =(3\Delta)^{-1} \sum_{i = 1}^n (\Delta_i X)^4$. The estimation of $\int_{0}^{T}g^{2}(\tau)d\tau$ is a more subtle problem and, {below, we} propose an estimator, which we call Two-time Scale Realized Volatility of Volatility (TSRVV) and is hereafter {denoted by 
$\widehat{IV(\sigma^2)}^{tsrvv}$.}
With these estimators, the final bandwidth can {then} be written as
\begin{equation}\label{General_Ito_Process_Bannwidth_Final}
h^{a,opt}_n = \left[ \frac{2 T \widehat{IQ(X)} \int K^2(x)dx} { n \widehat{IV(\sigma^2)}^{tsrvv} \iint K(x)K(y) C_1 (x,y) dxdy} \right] ^{1/2}.
\end{equation}

\subsubsection*{Iterative Algorithm}
{The previous bandwidth estimator} involves the spot volatility itself, {through $\widehat{IV(\sigma^2)}^{tsrvv}$,} which, of course, we do not know in advance. To deal with this problem, we propose to use an iterative algorithm in the same spirit of a fixed-point type of procedure.
Concretely, we start with an initial ``guess" for the bandwidth. For example, we can take (\ref{General_Ito_Process_Bannwidth_Final}) and simply set $\widehat{IQ(X)} / \widehat{IV(\sigma^2)}^{tsrvv} = 1$, which gives:
\begin{equation}\label{Optimal_Bandwidth_Initial_Guess}
h^{init}_n = \left[ \frac{2 T \int K^2(x)dx} { n \iint K(x)K(y) C_1 (x,y) dxdy} \right] ^{1/2}.
\end{equation}
With such a bandwidth, we {can} obtain initial estimates of the spot volatility at all the grid points. Such an initial spot volatility estimation {can then be applied to compute $\widehat{IV(\sigma^2)}^{tsrvv}$, which, in turn, can be used to} obtain another estimation of the optimal bandwidth. This procedure {is} continued iteratively until {a} predetermined stopping criteria is met.
In reality, {our simulations in Section \ref{Section_SimulationResults} show that} one or two iterations are enough to {obtain a} satisfactory result and more {iterations do not generally produce any improvement}. Algorithm \ref{Algorithm_Iterative} below outlines the proposed procedure for a global bandwidth $h$ given by (\ref{General_Ito_Process_Bannwidth_Final}).

\begin{center}
\begin{algorithm}
 	\KwData{$\Delta^n_1 X = X_1 - X_0,...,\Delta^n_{n} X = X_n - X_{n-1}$}
 	Set initial value of $h$ according to (\ref{Optimal_Bandwidth_Initial_Guess}) \;
 \While{Stopping criteria not met}{
	Get the estimation of spot volatility at all grid points, $ \hat{\sigma}_{t_i}^2$, by using the current bandwidth $h$ and (\ref{Kernel_Estimator}) \;
	Update the bandwidth $h$ by plugging in the new estimation of spot volatility to (\ref{General_Ito_Process_Bannwidth_Final});
 }
 \caption{Iterative Algorithm of Plug-In Type Bandwidth Selection Method}\label{Algorithm_Iterative}
\end{algorithm}
\end{center}

\subsection{A Two-time Scale Estimator of Integrated Volatility of Volatility}\label{sec:VolVolEst}

In this {subsection}, we propose an estimator of the quadratic variation of {$\sigma^{2}$}, $IV(\sigma^2) = \int_{0}^{T}g^{2}(\tau)d\tau$, which is often referred to as the Integrated Volatility of Volatility (IVV) of $X$. {This is based on the natural idea of using the realized quadratic variation of some estimated spot volatility $\hat{\sigma}$.
However,} the estimated spot volatilities have errors and a direct construction of {the} realized quadratic variation will {be highly biased} due to the errors of the estimation. This is similar to the case of estimating the integrated volatility of an It\^o  semimartingale based on high-frequency observations subject to micro-structure noise{, which} has received a great deal of attention in the literature. In this part, we adapt the so-called Two-time Scale Realized Volatility (TSRV) estimator of \cite{Two_Time_Scale} to estimate the IVV. For completeness, we first introduce the idea of TSRV and, then {proceed to defined} our estimator of IVV.

The {theory} of microstructure noise, put forward by \cite{Two_Time_Scale} and others, postulates that the true log prices of the asset $\{X_t\}_{t \geq 0}$ cannot directly be observed from the market and, instead, the log prices with {an additive} noise term, denoted by $Y_{t} = X_t + \varepsilon_t$, are observed. In the simplest case, $\{\varepsilon_t\}_{t\geq{}0}$ is white-noise (i.e., independent identically distributed with mean $0$ and constant variance $\sigma_{\varepsilon}^{2}$), which is independent from the true price {process $\{X_t\}_{t\geq{}0}$}. 
In that case, the Realized Volatility estimator of the integrated volatility has a bias and variance of order $O(n \mathbb{E}[\varepsilon^2])$ and $O(n \mathbb{E}[\varepsilon^4])$, respectively. Indeed, the following result is proved by \cite{Two_Time_Scale}:
\begin{equation}
\begin{split}
  \mathcal{L}\left(\left.[ Y, Y]_T^{all}\right|\mu,\sigma\right)
\,\stackrel{\mathcal{D}}{\longrightarrow}\, \int_{0}^{T}\sigma_{t}^{2}dt
   + 2n \mathbb{E}[\varepsilon^2]
    +
   \left[ 4n\mathbb{E}[\varepsilon^4] + \frac{2T}{n}\int_0^T\sigma_t^4dt\right]^{\frac{1}{2}}Z,
\end{split}
\end{equation}
where $Z \sim N(0,1)$ and $\mathcal{L}(\cdot|\mu,\sigma)$ represents the conditional law given the whole path of $(\mu, \sigma)$. Here, we follow the notation used in \cite{Two_Time_Scale} to denote $[\cdot , \cdot ]^{all}_T$ the realized quadratic variation based on all the observations $0=t_{0}<t_{1}<\dots<t_{n}=T$.
As proposed in \cite{Two_Time_Scale}, one way to alleviate the problem is to average the realized variations obtained at coarser time scale as the following:
\begin{equation*}
[ Y, Y]_T^{avg}
:=
\frac{1}{k}\sum_{\ell=1}^{k}[Y,Y]_{T}^{(\ell)}
=
\frac{1}{k} \sum_{i= 0}^{n - k} (X_{t_{i + k}} - X_{t_{i}})^2,
\end{equation*}
where $[Y,Y]^{(\ell)}:=\sum_{1\leq{}j\leq(n-\ell+1)/k}(X_{t_{\ell-1 + jk}} - X_{t_{\ell-1+(j-1)k}})^2$.
Then, {the biased corrected Two-time Scale Realized Volatility (TSRV) estimator is then} defined as
\begin{equation}
 [ Y, Y]_T^{\text{(tsrv)}}:=[Y,Y]_T^{(avg)}-\frac{n - k + 1}{nk}[Y,Y]_T^{(all)}.
\end{equation}
Such a TSRV estimator can be proved to converge to the true Integrated Volatility and a convergence rate of $O_{p}(n^{-1/6})$ can be achieved with $k = O(n^{2/3})$ (see \cite[Theorem 4]{Two_Time_Scale}).

Back to our problem, {let us first note that, at each observation time $t_{i}$, the estimated spot volatility can be written as} $\hat{\sigma}_{t_i}^2 = \sigma_{t_i}^2 + e_{t_i}$, where $e_{t_i}$ is the estimation error. The difference of our problem and the problem in \cite{Two_Time_Scale} is that our estimation errors are not independent. In fact, they are correlated. Such a correlation becomes more significant when we take the difference $\Delta_i \hat{\sigma}^2 = \hat{\sigma}_{t_{i + 1}}^2  - \hat{\sigma}_{t_{i}}^2 $. 
To {alleviate} such a problem, we propose to use {one-sided} kernel estimators and take the difference between the right {and left side estimators to find $\Delta_i \hat{\sigma}^2$. Concretely, define} $\hat{\sigma}_{l, t_i}^2$ and $\hat{\sigma}_{r, t_i}^2$ to be the left and right side estimator of $\sigma_{t_i}^2$, {respectively, defined} as the following:
\begin{equation}\label{eq:LeftRightKernelEstimator}
\hat{\sigma}_{l, t_i}^2
=
\frac{\sum _{j > i} K_h(t_{j-1} - t_i) (\Delta_j^n X)^2}
{\Delta \sum _{j > i} K_h(t_{j-1} - \tau)},
\quad
\hat{\sigma}_{r, t_i}^2
=
\frac{\sum _{j \leq i} K_h(t_{j-1} - t_i) (\Delta_j^n X)^2}
{\Delta \sum _{j \leq i} K_h(t_{j-1} - \tau)}.
\end{equation}
{Next,} we define the following two difference terms: $\Delta_i \hat{\sigma}^2 = \hat{\sigma}_{r, t_{i + 1}}^2 - \hat{\sigma}_{l, t_i}^2$, $\Delta_i^{(k)} \hat{\sigma}^2 = \hat{\sigma}_{r, t_{i + k}}^2 - \hat{\sigma}_{l, t_i}^2$. {Finally,} we can construct the following Two-time Scale Realized Volatility of Volatility (TSRVV):
\begin{equation}\label{eq:TSRVV}
   \widehat{IVV}_T^{\text{(tsrvv)}}= \frac{1}{k} \sum_{i = b}^{n - k - b} (\Delta_i^{(k)} \hat{\sigma}^2)^2  - \frac{n - k + 1}{nk} \sum_{i = b + k - 1}^{n - k - b} (\Delta_i \hat{\sigma}^2)^2.
\end{equation}
Here, $b$ {is a small enough integer, when} compared to $n$. The purpose of introducing such a number $b$ is to alleviate the boundary effect of the {one sided} estimators, since, for instance, it is expected that $\hat{\sigma}_{l, t_i}^2$ will be more inaccurate as $i$ {gets} smaller. 
It is worth to {noting} that
$\widehat{IVV}_T^{\text{(tsrvv)}}$
might become negative. In this case, a possible solution is to take simple
$\widehat{IVV}_T^{\text{(tsrvv)}}= \frac{1}{k} \sum_{i = b}^{n - k - b} (\Delta_i^{(k)} \hat{\sigma}^2)^2$.
Similar {to} \cite{Two_Time_Scale}, we can simply take $k = n^{2/3}$ in our case. There is some work to do if one wants to optimize such a TSRVV estimator, but this is outside the scope of the present work. Nevertheless, as our simulations in Section \ref{Section_SimulationResults} show, the accuracy of spot volatility is good enough even without refining such a TSRVV estimator.

{The following result shows the consistency of our estimator and shed some light on the rate of convergence. Its proof is deferred to the Appendix section.}
\begin{theorem}\label{CstTSVV}
For the model (\ref{Asset_Dynamic}) with $\mu$ and $\sigma$ satisfying Assumptions \ref{IndependentCondition}, \ref{Boundedness_Condition}, and $\sigma$ being an squared integrable It\^{o} process {as in Eq.~}\eqref{eq:Ito_process} (thus satisfying Assumption \ref{VolatilityConditionGeneral}), and a kernel function $K$ satisfying Assumption \ref{AdmissibleKernel}, for any fixed $t_b \in (0, T/2)$, \eqref{eq:TSRVV} is a consistent estimator of $\int_{t_b}^{T - t_b} g_t^2 dt$ with $b = t_b / \Delta$. The convergence rate is given by $O_p(\frac{n^{1/4}}{k^{1/2}}) + O_p(\sqrt{\frac{k}{n}})$ and, {thus, $k$ can be chosen to be of the form $Cn^{3/4}$ so that to attain the `optimal' convergence rate $n^{-1/8}$}.
\end{theorem}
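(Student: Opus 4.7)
The plan is to adapt the two-time scale strategy of Zhang et al.\ (2005), treating the spot volatility estimation error $e_{t_i} := \hat{\sigma}_{t_i}^2 - \sigma_{t_i}^2$ as the analog of the ``microstructure noise'' contaminating the (unobservable) true spot variance $\sigma_{t_i}^2$. First I would write $\Delta_i^{(k)}\hat\sigma^2 = (\sigma_{t_{i+k}}^2 - \sigma_{t_i}^2) + (e_{r,t_{i+k}} - e_{l,t_i}) =: \Delta_i^{(k)}\sigma^2 + \eta_i^{(k)}$, with the analogous splitting $\Delta_i\hat\sigma^2 = \Delta_i\sigma^2 + \eta_i$, and expand the squares in \eqref{eq:TSRVV} into a signal contribution involving only $\Delta_i^{(k)}\sigma^2$ and $\Delta_i\sigma^2$, a noise-bias contribution involving only $\eta_i^{(k)}$ and $\eta_i$, and a cross part. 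As in the original TSRV construction, the prefactor $\frac{n-k+1}{nk}$ in \eqref{eq:TSRVV} is tailored so that the leading-order noise biases from the two scales cancel, leaving a residual that must then be bounded.

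For the signal term, note that under the stated assumptions $V = \sigma^2$ is itself an It\^o process with diffusion coefficient $g$, so $\frac{1}{k}\sum_i (\Delta_i^{(k)}\sigma^2)^2$ is an averaged, subsampled realized variance estimator of $\int g_t^2\,dt$ at time scale $k\Delta$. Classical results for the realized quadratic variation of continuous It\^o semimartingales, applied to the $k$-fold subsampling of $\sigma^2$, yield convergence to $\int_{t_b}^{T - t_b} g_t^2\,dt$ with $L^2$-rate $O(\sqrt{k/n})$, while the symmetric fast-scale signal piece $\frac{n-k+1}{nk}\sum_i (\Delta_i\sigma^2)^2$ is of order $O_p(1/k)$ and can be absorbed into lower order.

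For the noise-bias and cross terms, the next step is to use Assumption~\ref{IndependentCondition} together with the one-sided construction in \eqref{eq:LeftRightKernelEstimator} to show that the errors $e_{l,t_i}$ and $e_{r,t_{i+k}}$ are built from essentially disjoint increments whenever $k\Delta \gtrsim h$, so that $\mathbb{E}[(\eta_i^{(k)})^2] = V_l(t_i) + V_r(t_{i+k}) + o(\cdot)$ with the one-sided spot-MSEs $V_l,V_r$ readable off Theorem~\ref{Approximated_MSE_Theorem}. A direct computation then shows that both $\mathbb{E}[\frac{1}{k}\sum(\eta_i^{(k)})^2]$ and $\mathbb{E}[\frac{n-k+1}{nk}\sum\eta_i^2]$ equal $\frac{n}{k}\cdot 2\bar V_e + o(1)$, so the biases cancel. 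For the variances, the key inputs are the fourth-moment control coming from Assumption~\ref{Boundedness_Condition} together with the kernel decay in Assumption~\ref{AdmissibleKernel}; these, combined with the MSE rate $V_e = O(\sqrt{\Delta})$ attained under the optimal-bandwidth choice $h \sim n^{-1/2}$ for the BM-type volatility $\gamma = 1$, ultimately yield an $O_p(n^{1/4}/\sqrt k)$ residual noise fluctuation. The cross terms $\frac{2}{k}\sum \Delta_i^{(k)}\sigma^2 \cdot \eta_i^{(k)}$ and their fast-scale analog are handled by Cauchy--Schwarz using the non-leverage assumption and turn out to be of strictly lower order. Combining the two pieces gives the global rate $O_p(\sqrt{k/n} + n^{1/4}/\sqrt k)$, which is balanced by $k \sim n^{3/4}$ for the claimed rate $n^{-1/8}$.

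The principal difficulty---absent in the original TSRV setting where the noise is i.i.d.---is that the errors $\{e_{t_i}\}_i$ inherit a nontrivial correlation structure on the bandwidth scale $h$, because neighboring kernel estimators share most of their data. Controlling the effective sample size in the noise-bias residual, the rate at which $\mathbb{E}[\eta_i^{(k)}\eta_j^{(k)}] \to 0$ as $|i - j|$ grows, and the subtle interaction between $\Delta_i^{(k)}\sigma^2$ and $\eta_i^{(k)}$ through the $\sigma$-path that enters the spot-MSE, is where the bulk of the technical work will lie. The boundary cutoff $b = t_b/\Delta$ is imposed precisely to excise the region where the one-sided estimators become unreliable due to lack of observations on one side.
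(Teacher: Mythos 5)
Your overall strategy---decompose $\Delta_i^{(k)}\hat\sigma^2$ into a signal increment plus one-sided estimation errors, treat the errors as TSRV-style noise, handle the signal part via subsampled realized-variance asymptotics at rate $O_p(\sqrt{k/n})$, and balance the two contributions---is the same as the paper's. However, your accounting of which term produces the $O_p(n^{1/4}/k^{1/2})$ rate is reversed, and this is a genuine gap. You claim the signal--error cross terms $\frac{2}{k}\sum_i \Delta_i^{(k)}\sigma^2\,\eta_i^{(k)}$ are ``of strictly lower order,'' but Cauchy--Schwarz gives exactly
\[
\frac{1}{k}\,\mathbb{E}\Bigl|\sum_i \Delta_i^{(k)}\sigma^2\,\eta_i^{(k)}\Bigr|
\leq \frac{1}{k}\Bigl(\sum_i\mathbb{E}\bigl[(\Delta_i^{(k)}\sigma^2)^2\bigr]\Bigr)^{1/2}\Bigl(\sum_i\mathbb{E}\bigl[(\eta_i^{(k)})^2\bigr]\Bigr)^{1/2}
=\frac{1}{k}\,O(k^{1/2})\,O(n^{1/4})=O\bigl(n^{1/4}k^{-1/2}\bigr),
\]
using $\mathbb{E}[(\Delta_i^{(k)}\sigma^2)^2]=O(k\Delta)$ for the It\^o process $\sigma^2$ and the spot-MSE $\mathbb{E}[(\eta_i^{(k)})^2]=O(n^{-1/2})$ at the optimal bandwidth. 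In the paper's proof this cross term (in the form $\frac{1}{k}\sum_i(\sigma^2_{t_{i+k}}-\sigma^2_{t_{i+1}})l_i$ and its $r$-analogue) is precisely the dominant residual: it is what forces $k\sim n^{3/4}$ and the rate $n^{-1/8}$, and no argument is given---nor is one obviously available---that would make it strictly smaller. Conversely, the pure-noise part to which you attribute the $O_p(n^{1/4}/\sqrt{k})$ fluctuation is actually much smaller, and your plan to establish this by cancelling expectations TSRV-style and then controlling the fluctuation of a correlated residual is both harder than necessary and left unjustified. The paper's key observation is that the cancellation is algebraic, not merely in expectation: writing $l_i=\hat\sigma^2_{l,t_i}-\sigma^2_{t_i}$ and $r_i=\hat\sigma^2_{r,t_i}-\sigma^2_{t_i}$, both time scales are built from the \emph{same} $l_i$'s and $r_i$'s, so the diagonal sums $\sum_i l_i^2$ and $\sum_i r_i^2$ cancel pathwise between the two scales up to $O(k)$ boundary terms; what survives of the noise-only part is $\frac{1}{k}\sum_{i=n-k+1}^{n-1}l_i^2=O_p(n^{-1/2})$, its $r$-counterpart, and the off-diagonal products $\frac{1}{k}\sum_i l_i r_{i+k}$ and $\frac{1}{k}\sum_i l_i r_{i+1}$, each $O_p(\sqrt{n}/k)$ by Cauchy--Schwarz---all negligible relative to $n^{-1/8}$ when $k\sim n^{3/4}$. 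Without noticing this exact cancellation, the ``principal difficulty'' you flag (the correlation of the errors on the bandwidth scale) would indeed block your noise-variance estimate; with it, that difficulty largely evaporates and the burden shifts to the cross term you dismissed.
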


\begin{remark}
(1) The proof {of Theorem \ref{CstTSVV}} actually holds even if we use {two-sided} kernel estimation. {The main reason for using one-side kernels is to correct the  estimation bias.}
{Also, according} to \cite{Two_Time_Scale}, \eqref{eq:TSRVV_second_part_error} actually converge to a normal distribution. Therefore, although we have not {investigated in detail the} asymptotic distribution of \eqref{eq:TSRVV_first_part_error}, {it is expected} that for any $\epsilon > 0$, with $k = Cn^{3/4 + 2\epsilon}$, $\mbox{TSRVV} - \int_0^T \Lambda_t^2 dt$ converges to normal distribution with a rate $n^{-1/8 + \epsilon}$.

(3) The estimation of integrated volatility of volatility {has also been} studied in other literature, for example, \cite{vetter2015volvolestimation}, {using a different method}. In the simulation study, we compare these two methods and found that our method works better under some widely used settings. Intuitively, the reason is that our estimation of volatility of volatility is based on more accurate estimation of spot volatility and our iterative methods further help enhance the accuracy.
\end{remark}

\section{Central Limit Theorems}\label{Section:CLT}

In this section, we seek to characterize the limiting distribution of the estimation error of the kernel estimator and prove feasible Central Limit Theorems (CLT) for the estimation error of kernel estimators. The {starting point} is to decompose the error into the following two parts:
\begin{equation}\label{eq:CLT_two_errors}
\begin{split}
\sum_{i=1}^n K_h(t_{i-1} - \tau) (\Delta_i X)^2 - \sigma_\tau^2
& =
\sum_{i=1}^n K_h(t_{i-1} - \tau) (\Delta_i X)^2 - \int_0^T K_h(t - \tau) \sigma_t^2 {dt}\\
&\quad +
\int_0^T K_h(t - \tau) (\sigma_t^2 - \sigma_\tau^2) dt
+
o_p \left(\left(\frac{\Delta}{h}\right)^{1/2}\right) + o_p \left(h^{\gamma / 2}\right).
\end{split}
\end{equation}
In order to obtain a CLT {for} the kernel estimator, we need to deal with the two error terms {above}. The first error term is easier to handle and has already been studied in the literature of kernel estimation of spot volatility (see, e.g., \cite{kristensen2010nonparametric}). By contrast, the limiting distribution of the second error term of \eqref{eq:CLT_two_errors} is more involved. {We can find two general type of results in the literature}:
\begin{enumerate}[(1)]
\item 
In the case that $\sigma_t^2$ follows an It\^{o} process, the limiting distribution of the second error term can be determined by Martingale Central Limit Theorems (cf. \cite{foster1994continuous_optKernel} Theorem 1 and 2).
\item A second approach consists of using a {`suboptimal'} bandwidth so that only the first error term in \eqref{eq:CLT_two_errors} is significant. This would be the case if, for instance, we choose $h = o(\Delta ^{1/(\gamma + 1)})$, in which case {the order of the second term in (\ref{eq:CLT_two_errors}) becomes {$O(h^{\gamma})$ and is} negligible compared to the order of the first term, {which is $o(\Delta/h)$}.} Instances of this {type of results} can be found in \cite{fan2008spot} (see Assumption A4 and Theorem 1 therein), and \cite{kristensen2010nonparametric} (Theorem 3 therein). 
\end{enumerate}
The {two previous} approaches have some obvious {limitations}. The results obtained using the first approach only deal with one level of smoothness in the volatility process, while the results obtained using the second approach can only yield suboptimal convergence rates.
In this work, we obtain {a CLT} for two {relatively broad frameworks} that are closely related to Assumption \ref{VolatilityConditionGeneral} and cover all the examples mentioned in Section \ref{Common_Volatility_Processes_Section}. On both cases, the CLT {has the} optimal convergence rate, and the second case covers a wide range of models of different smoothness order.

We begin with {an analysis of} the first error term, {which, as mentioned above, has already been studied in the literature (see, e.g., \cite{kristensen2010nonparametric}). Concretely, we have the following CLT for this term. A sketch of the proof is also provided for the sake of completeness.}
\begin{theorem}\label{thm:CLT_Error1}
For the model (\ref{Asset_Dynamic}) with $\mu$ and $\sigma$ satisfying Assumptions \ref{IndependentCondition}, \ref{Boundedness_Condition}, and, \ref{VolatilityConditionGeneral}, and a kernel function $K$ satisfying Assumption \ref{AdmissibleKernel}, we have, for any $\tau \in (0,T)$,
\begin{equation}\label{eq:CLT_Error1_version1}
\left( \frac{\Delta}{h} \right)^{-1/2} \sum_{i=1}^n K_h(t_{i-1} - \tau) \left( (\Delta_i X)^2 - \int_{t_{i - 1}}^{t_i} \sigma_t^2 dt \right) 
\rightarrow _D
\delta_1 N(0,1){,}
\end{equation}
where $\delta_1^2 = 2 \sigma_\tau^4 \int K^2(x)dx$. The following version also holds:
\begin{equation}\label{eq:CLT_Error1_version2}
\left( \frac{\Delta}{h} \right)^{-1/2} \left[ \sum_{i=1}^n K_h(t_{i-1} - \tau) (\Delta_i X)^2 
-
\int_0^T K_h(t - \tau) \sigma_t^2 dt
\right]
\rightarrow _D
\delta_1 N(0,1).
\end{equation}
\end{theorem}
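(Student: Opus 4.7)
The plan is to reduce to the purely diffusive part, condition on the $\sigma$-algebra generated by $(\mu,\sigma)$, and then invoke a conditional Lyapunov CLT that upgrades to stable convergence, since the limit variance $\delta_1^2$ is random.

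First I would split $\Delta_i X = \mu_i^\Delta + Z_i$, with $\mu_i^\Delta := \int_{t_{i-1}}^{t_i}\mu_s\,ds$ and $Z_i:=\int_{t_{i-1}}^{t_i}\sigma_s\,dB_s$, so that
\[
(\Delta_i X)^2 - v_i = (Z_i^2 - v_i) + 2\mu_i^\Delta Z_i + (\mu_i^\Delta)^2, \qquad v_i:=\int_{t_{i-1}}^{t_i}\sigma_s^2\,ds.
\]
Assumption \ref{Boundedness_Condition} and the It\^o isometry give $\mathbb{E}[(\mu_i^\Delta)^2]=O(\Delta^2)$ and (using Assumption \ref{IndependentCondition} so that $\mathbb{E}[Z_i Z_j\,|\,\mu,\sigma]=0$ for $i\neq j$) $\mathbb{E}[(\sum_i K_h(t_{i-1}-\tau)\mu_i^\Delta Z_i)^2]=4\sum_i K_h^2\,\mathbb{E}[(\mu_i^\Delta)^2 v_i]$. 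Together with the direct Riemann-sum estimates $\sum_i K_h(t_{i-1}-\tau)=O(1/\Delta)$ and $\sum_i K_h^2(t_{i-1}-\tau)=O(1/(h\Delta))$, one checks by a Markov/second-moment argument that the scaled drift-drift term $(\Delta/h)^{-1/2}\sum_i K_h(\mu_i^\Delta)^2$ has mean $O((h\Delta)^{1/2})$ and the scaled drift-diffusion term has second moment $O(\Delta)$; both are $o_p(1)$. It therefore suffices to prove $\widetilde S_n := (\Delta/h)^{-1/2}\sum_i K_h(t_{i-1}-\tau)(Z_i^2 - v_i)\to_D \delta_1 N(0,1)$.

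The core step is the conditional CLT. Condition on $\mathcal G := \sigma((\mu_t,\sigma_t)_{t\in[0,T]})$; under Assumption \ref{IndependentCondition}, the regular conditional distribution of $B$ given $\mathcal G$ is still standard Brownian motion, so the $Z_i$ are conditionally independent centered Gaussians with $\mathrm{Var}(Z_i\mid\mathcal G)=v_i$. Setting $\xi_{n,i}:=(\Delta/h)^{-1/2} K_h(t_{i-1}-\tau)(Z_i^2-v_i)$, the Gaussian moment identities $\mathbb{E}[(N^2-v)^2]=2v^2$ and $\mathbb{E}[(N^2-v)^4]=60 v^4$ yield $\mathrm{Var}(\xi_{n,i}\mid\mathcal G)=2(h/\Delta)K_h^2(t_{i-1}-\tau) v_i^2$ and $\mathbb{E}[\xi_{n,i}^4\mid\mathcal G]=60(h/\Delta)^2 K_h^4(t_{i-1}-\tau) v_i^4$. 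Using the pathwise approximation $v_i = \Delta\sigma_{t_{i-1}}^2 + R_i$ with $\mathbb{E}|R_i|=O(\Delta^{1+\gamma/2})$ (direct from Assumption \ref{VolatilityConditionGeneral}), together with the continuity of $t\mapsto\sigma_t^4$ at $\tau$ applied to the Riemann sum $\sum_i (\Delta/h) K^2((t_{i-1}-\tau)/h)\sigma_{t_{i-1}}^4 \to \sigma_\tau^4 \int K^2(x)dx$, one obtains
\[
\sum_{i=1}^n \mathrm{Var}(\xi_{n,i}\mid\mathcal G) \;\xrightarrow{\,\mathbb{P}\,}\; 2\sigma_\tau^4\int K^2(x)\,dx = \delta_1^2,
\]
while a parallel estimate gives $\sum_i\mathbb{E}[\xi_{n,i}^4\mid\mathcal G] = O_p(\Delta/h)\to 0$, verifying Lyapunov's condition with exponent $2$ conditionally on $\mathcal G$. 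Since $\delta_1^2$ is $\mathcal G$-measurable, the conditional CLT promotes to stable convergence $\widetilde S_n\to_D \delta_1\cdot Z$ with $Z\sim\mathcal N(0,1)$ independent of $\mathcal G$ (and in particular of $\sigma_\tau$), which is \eqref{eq:CLT_Error1_version1}.

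For \eqref{eq:CLT_Error1_version2}, the discrepancy
\[
\sum_i K_h(t_{i-1}-\tau) v_i \;-\; \int_0^T K_h(t-\tau)\sigma_t^2\,dt
\]
is a Riemann-sum error of the type handled pathwise by Lemma \ref{D1Lemma} with $m=1$ applied to the integrand $\sigma_t^2$; it is $O_p(\Delta/h)$, so after rescaling by $(\Delta/h)^{-1/2}$ it becomes $o_p(1)$ and Slutsky's lemma closes the argument. The main obstacle will be the stable CLT step: one must justify a random-limit-variance (mixed-normal) conclusion rather than an ordinary CLT, verify the Lyapunov-type remainders converge \emph{in probability} uniformly enough that conditioning does not introduce spurious fluctuations, and carefully invoke Assumption \ref{IndependentCondition} so that $B$ is genuinely a Brownian motion under the regular conditional law given $\mathcal G$.
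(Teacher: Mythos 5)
Your proposal is correct and follows essentially the same route as the paper: condition on the path of $(\mu,\sigma)$ so the summands become independent, apply a triangular-array CLT (the paper checks Lindeberg--Feller where you check the conditional Lyapunov condition with fourth moments, and the paper folds the drift into $Y_{n,i}$ and asserts $\mathbb{E}(Y_{n,i})=0$ where you isolate and bound the drift contributions explicitly), and dispose of the Riemann-discretization gap between \eqref{eq:CLT_Error1_version1} and \eqref{eq:CLT_Error1_version2} via Lemma \ref{D1Lemma}. Your added care about the drift terms and about upgrading the conditional CLT to a mixed-normal/stable limit (since $\delta_1^2=2\sigma_\tau^4\int K^2(x)dx$ is random) tightens points the paper leaves implicit, but it is the same proof.
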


\begin{proof}
In the proof, we condition on the whole path of $(\mu, \sigma)$ so that we can assume that these processes are deterministic. 
{Let us start by noting the following relationship, which can be justified by Lemma \ref{D1Lemma}{:}
\begin{equation}\label{eq:CLT_prep}
\sum_{i=1}^n K_h(t_{i-1} - \tau) \int_{t_{i - 1}}^{t_i} \sigma_t^2 dt - \int_0^T K_h(t - \tau) \sigma_t^2 dt
=
o_p \left(\left(\frac{\Delta}{h}\right)^{1/2}\right).
\end{equation}
Next, define} 
$$
Y_{n, i} = \left( \frac{\Delta}{h} \right)^{-1/2} K_h(t_{i-1} - \tau) \left( (\Delta_i X)^2 - \int_{t_{i - 1}}^{t_i} \sigma_t^2 dt \right).
$$ 
Then, for each $n$, $\{Y_{n, i} : 1 \leq i \leq n\}$ are independent. By applying Lemma \ref{D1Lemma} and with some similar calculation as {in} the proof of Theorem \ref{Approximated_MSE_Theorem}, we have the following as $n \to \infty$:
\begin{equation*}
\begin{split}
\mathbb{E} (Y_{n, i}) = 0, \quad
\sum_{i = 1}^n \mathbb{E} (Y_{n, i}^2)
=
2 \sigma_{\tau}^4 \int K^2(x)dx + o (1), \quad
\sum_{i = 1}^n \mathbb{E} (Y_{n, i}^2 1_{\{Y_{n, i} > \epsilon\}} ) = o(1).
\end{split}
\end{equation*}
Therefore, \eqref{eq:CLT_Error1_version1} follows by Lindeberg-Feller Theorem. By using \eqref{eq:CLT_prep} {together with (\ref{eq:CLT_Error1_version1})}, \eqref{eq:CLT_Error1_version2} follows.
\end{proof}

Next, we consider the second error term in (\ref{eq:CLT_two_errors}).
\begin{theorem}\label{thm:CLT_Error2}
Suppose that the coefficient processes $\mu$ and $\sigma$ in the model (\ref{Asset_Dynamic}) satisfy Assumptions \ref{IndependentCondition}, \ref{Boundedness_Condition}, and \ref{VolatilityConditionGeneral}, and the kernel function $K$ satisfies Assumption \ref{AdmissibleKernel}. Furthermore, suppose that either one of the following conditions holds:
\begin{enumerate}[(1)]
\item
$\{\sigma^2_t\}_{t\geq{}0}$ is an It\^{o} process given by
$\sigma_t^2 = \sigma_{0}^{2}+\int_0^t {f_{s}} ds + \int_0^t {g_{s}} dW_s$, {where we {further assume that} $\sup_{t \in [0, T]}\mathbb{E}[|f_t|] < \infty$, $\sup_{t \in [0, T] } \mathbb{E} [ g_{t}^{2} ] < \infty$, and $\mathbb{E} [ ( g_{\tau + h} - g_\tau )^2 ] \to 0$ as $h \to 0$.}
\item 
$\sigma^2_t = f(Z_t)$, for a deterministic function $f:\mathbb{R}\to\mathbb{R}$ and a Gaussian process $\{Z_t\}_{t\geq{}0}$ satisfying all requirements of Proposition \ref{proposition:C_gamma_of_function_of_gaussian_process}.
\end{enumerate}
{Then, on an extension $(\bar{\Omega}, \bar{\mathscr{F}}, \bar{\mathbb{P}})$ of the probability space $(\Omega, \mathscr{F},\mathbb{P})$, equipped with a standard normal variable $\xi$ independent of $\{Z_{t}\}_{t\geq{}0}$, we have, for each $\tau \in (0,T)$,}
\begin{equation}\label{eq:CLT_error2}
h^{-\gamma/2} \left( \int_0^T K_h(t - \tau) (\sigma_t^2 - \sigma_\tau^2) dt \right) \rightarrow_{D}\, \delta_2 \xi,
\end{equation}
where, under the condition (1) above, $\delta_2^2 = g(\tau, \omega)^2 \iint K(x) K(y) C(x, y) dxdy$, while, under the condition (2), $\delta_2^2 = [f^\prime(Z_\tau)]^2 L^{(Z)}(\tau) \iint K(x)K(y) C^{(Z)}_\gamma (x,y) dxdy$. 
\end{theorem}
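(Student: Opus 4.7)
I would treat the two cases separately.

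\textit{Case (1): It\^o volatility.} I decompose
\[
\sigma_t^2-\sigma_\tau^2 = \int_\tau^t f_s\,ds + \int_\tau^t g_s\,dW_s.
\]
A direct bound shows that $\int_0^T K_h(t-\tau)\int_\tau^t f_s\,ds\,dt = O_p(h)$, which after multiplication by $h^{-1/2}$ is $o_p(1)$ and so negligible. For the martingale part I apply stochastic Fubini to rewrite
\[
\int_0^T K_h(t-\tau)\int_\tau^t g_s\,dW_s\,dt = \int_0^T G_h(s)\, g_s\,dW_s,
\]
where $G_h(s) = \int_s^T K_h(t-\tau)\,dt$ for $s>\tau$ and $G_h(s) = -\int_0^s K_h(t-\tau)\,dt$ for $s<\tau$. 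Conditioning on $(\mu,\sigma)$, permitted by Assumption \ref{IndependentCondition}, this becomes a Gaussian stochastic integral with deterministic integrand, to which a Foster--Nelson type martingale CLT applies. The conditional variance is $h^{-1}\int_0^T G_h(s)^2 g_s^2\,ds$; the substitution $u=(s-\tau)/h$, together with continuity of $\mathbb{E}[g_t^2]$ at $\tau$, yields the limit $g_\tau^2\,\bigl(\int_0^\infty \bar K_+^2(u)\,du + \int_{-\infty}^0 \bar K_-^2(u)\,du\bigr)$ with $\bar K_+(u) = \int_u^\infty K(v)\,dv$ and $\bar K_-(u) = \int_{-\infty}^u K(v)\,dv$. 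An indicator-function rewriting of $\min(|x|,|y|)\mathbf{1}_{\{xy\geq 0\}}$ identifies this with $g_\tau^2 \iint K(x)K(y)\,C_1(x,y)\,dx\,dy$, matching the stated $\delta_2^2$.

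\textit{Case (2): Gaussian-driven volatility.} I use the second-order Taylor expansion
\[
f(Z_t)-f(Z_\tau) = f'(Z_\tau)(Z_t-Z_\tau) + \tfrac{1}{2}f''(\xi_{t,\tau})(Z_t-Z_\tau)^2.
\]
Using the moment bound $\mathbb{E}[(Z_t-Z_\tau)^4] = O(|t-\tau|^{2\gamma})$, which follows from Gaussianity and the uniform version of Assumption \ref{VolatilityConditionGeneral}, together with condition (b) of Proposition \ref{proposition:C_gamma_of_function_of_gaussian_process}, the remainder's integral is $O_p(h^\gamma)$ and hence negligible after dividing by $h^{\gamma/2}$. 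For the principal term $h^{-\gamma/2} f'(Z_\tau)\int_0^T K_h(t-\tau)(Z_t-Z_\tau)\,dt$, observe that the inner integral is a linear functional of a Gaussian process, hence itself Gaussian; Lemma \ref{D2Lemma}, applied to the covariance in \eqref{SCDefinitionGeneral}, shows that its variance divided by $h^\gamma$ converges to $L^{(Z)}(\tau)\iint K(x)K(y)\,C_\gamma^{(Z)}(x,y)\,dx\,dy$. Therefore $h^{-\gamma/2}\int_0^T K_h(t-\tau)(Z_t-Z_\tau)\,dt$ converges in distribution to a centered normal with that variance.

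To complete case (2), I must show asymptotic independence of this Gaussian limit from $f'(Z_\tau)$. Using condition (a) of Proposition \ref{proposition:C_gamma_of_function_of_gaussian_process}, the covariance of $Z_\tau$ with $h^{-\gamma/2}\int_0^T K_h(t-\tau)(Z_t-Z_\tau)\,dt$ is bounded by $h^{-\gamma/2}\int |K_h(t-\tau)|\,O(|t-\tau|)\,dt = O(h^{1-\gamma/2})$, which vanishes since $\gamma\in[1,2)$. Jointly Gaussian variables with vanishing correlation are asymptotically independent, so on the extension carrying $\xi$, Slutsky's lemma combined with continuity of $f'$ yields the stated limit $\delta_2\xi$ with $\delta_2^2 = [f'(Z_\tau)]^2\,L^{(Z)}(\tau)\iint K(x)K(y)\,C_\gamma^{(Z)}(x,y)\,dx\,dy$. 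The main obstacle I anticipate is to formalize convergence on an enlarged probability space, in particular upgrading the conditional CLT in case (1) to a stable-in-law statement compatible with the random coefficient $g(\tau,\omega)$, and similarly handling the measurability of the independent copy $\xi$ in case (2).
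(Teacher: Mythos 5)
Your overall architecture matches the paper's: in case (1) you kill the drift, convert the weighted integral of the martingale part into a stochastic integral $\int_0^T G_h(s)g_s\,dW_s$ (your stochastic-Fubini step is equivalent to the paper's pathwise integration by parts, and your $G_h$ is exactly the paper's $\bar L((s-\tau)/h)$ up to the boundary correction), and identify the limiting variance with $\iint K(x)K(y)C_1(x,y)\,dx\,dy$ via the indicator rewriting of $\min(|x|,|y|)\mathbf{1}_{\{xy\ge 0\}}$; case (2) is essentially identical to the paper's proof, including the bivariate-Gaussian decorrelation argument via condition (a).

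The genuine gap is in case (1), at the step ``conditioning on $(\mu,\sigma)$, permitted by Assumption \ref{IndependentCondition}, this becomes a Gaussian stochastic integral with deterministic integrand.'' Assumption \ref{IndependentCondition} gives independence of $(\mu,\sigma)$ from $B$, the Brownian motion driving the \emph{price}; it says nothing about $W$, the Brownian motion driving $\sigma^2$ itself, and in the models of interest (e.g.\ Heston, where $g_t=\xi\sqrt{V_t}$) $g$ and $W$ are strongly dependent. Worse, conditioning on the whole path of $\sigma^2$ essentially determines $\int_0^T G_h(s)g_s\,dW_s=\int_0^T G_h(s)\,dv_s$ pathwise, so the conditional law is degenerate, not Gaussian; the randomness of this error term lives entirely inside $\sigma^2$, which is precisely why Assumption \ref{IndependentCondition} buys you nothing here. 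The paper's way around this is a localization/freezing argument that your plan is missing: using $\mathbb{E}[(g_{\tau+h}-g_\tau)^2]\to 0$ and $\int \bar L^2<\infty$, replace $g_s$ by $g_{\tau-\sqrt h}$ and restrict the integral to $(\tau-\sqrt h,\,T]$ at an $o_P(1)$ cost, then condition on $\mathscr{F}_{\tau-\sqrt h}$ — with respect to which the frozen coefficient is measurable while the remaining increments of $W$ are independent — and pass to the limit in the characteristic function $\mathbb{E}[\exp(-\tfrac{u^2}{2}g^2_{\tau-\sqrt h}\int \bar L^2)]$ to obtain the mixed normal limit $g_\tau\,\xi\,(\iint KKC_1)^{1/2}$ with $\xi$ independent of $g_\tau$. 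Your closing paragraph correctly identifies the stable-in-law upgrade as the obstacle, but the mechanism you propose for it does not work; replacing it with the freezing-plus-$\mathscr{F}_{\tau-\sqrt h}$-conditioning argument (and using the assumed $L^2$-continuity of $g$ at $\tau$, rather than continuity of $\mathbb{E}[g_t^2]$, to control the quadratic variation) closes the gap.
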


\begin{proof}
\textbf{(1)}
{The proof in the It\^o process setting of (1) is inspired by that of Theorems 1 and 2 in \cite{foster1994continuous_optKernel}, but in our case, we do not assume a bounded support for the kernel function, and work directly with the continuous model, which makes the assumptions and proof clearer.} For simplicity, we will use the following notations:
$$
V_t = \sigma_t^2 = \sigma_{0}^{2}+\int_0^t f_s ds + \int_0^t g_s dW_s , \quad
v_t = \sigma_{0}^{2}+\int_0^t g_s dW_s.
$$
It is easy to see from Lemma \ref{BMProcesses} that $V$ and $v$ both satisfy Assumption \ref{VolatilityConditionGeneral} with $\gamma^V = \gamma^v = 1$ and $C_\gamma^V = C_\gamma^v$. Now, {since
$$
h^{-1/2} \mathbb{E} \left| \int_0^T K_h(t - \tau) \int_\tau^t f_s ds dt \right|
\leq
\sup_{s \in [0, T]} \mathbb{E} [|f_s|] h^{-1/2} \int_0^T | K_h(t - \tau)| |t - \tau| dt 
=
O(h^{1 / 2}) = o(1),
$$
we} can conclude that the drift term of $V$ has {a} negligible contribution to the final error, i.e. 
\begin{equation}\label{Eq:SS1}
h^{-1/2} \left( \int_0^T K_h(t - \tau) (V_t - V_\tau) dt \right) -
h^{-1/2} \left( \int_0^T K_h(t - \tau) (v_t - v_\tau) dt \right) 
=
o_P(1).
\end{equation}
Therefore, {it suffices to work with the process $v$ and only to consider the weak convergence of the following:
	\[
		I:=h^{-1/2} \left( \int_{\tau}^{T} K_h(t - \tau) (v_t - v_\tau) dt \right).
	\]
For the sake of clarity}, we will first assume a one-sided kernel function, i.e. $K(x) = 0$ for all $x < 0$. Applying the integration by parts formula, we have that
	\begin{align*}
		I&=h^{-1/2}\left(-L\left(\frac{t-\tau}{h}\right)(v_{t}-v_{\tau})\Big|_{t=\tau}^{t=T}+\int_{\tau}^{T}L\left(\frac{t-\tau}{h}\right)dv_{t}\right)\\
		&=-h^{-1/2}L\left(\frac{T-\tau}{h}\right)(v_{T}-v_{\tau})+h^{-1/2}\int_{\tau}^{T}L\left(\frac{t-\tau}{h}\right)g_{t}dW_{t}
		=:R+S,
	\end{align*}
	where $L(t)=\int_{t}^{\infty}K(u)du$ so that $\frac{d}{dt}(L((t-\tau)/h))=-K_{h}(t-\tau)$.
Since our assumptions on $K$ imply that $x^{1/2}L(x) \rightarrow 0$, as $x \rightarrow 0$
\footnote{{
Indeed, we have assumed that $x^{\gamma + 1}K(x) \rightarrow 0$, as $x \to \infty$, where in this case $\gamma = 1$. Then, by L'Hopital, it is easy to check that we have that $\lim_{x \to +\infty}L^2(x)/x^{-2} =0$.}},
we can conclude that $R = o_P(1)$. For the other term $S$, let us consider the following approximation
		\[
			\widetilde{S}:=h^{-1/2}g_{\tau}\int_{\tau}^{T}L\left(\frac{t-\tau}{h}\right)dW_{t},
		\]
We first observe that $S-\widetilde{S}=o_{P}(1)$. Indeed, Assumption \ref{AdmissibleKernel} implies that $\int_0^\infty L^2(x) dx < \infty$, so we have
		\begin{align*}
\mathbb{E}\left[\left(S-\widetilde{S}\right)^{2}\right]
&=
\frac{1}{h} \left( \int_{\tau}^{\tau + \sqrt{h}} + \int_{\tau + \sqrt{h}}^{T} \right)
L^{2}\left(\frac{t-\tau}{h}\right)\mathbb{E}\left[\left(g_{t}-g_{\tau}\right)^{2}\right]dt \\
&\leq
\sup_{t \in [\tau, \tau + \sqrt{h}]} \mathbb{E}\left[\left(g_{t}-g_{\tau}\right)^{2}\right] \int_{0}^{\infty} L^2(s) ds 
+
4 \sup_{t \in [\tau, T] } \mathbb{E} ( g_{t}^{2} ) \int_{1/\sqrt{h}}^{\infty} L^2(s) ds
=
o(1), \quad h \rightarrow 0.
\end{align*}
We also observe that conditional on $\mathcal{F}_{\tau}$, $\tilde{S}$ is Gaussian with mean $0$ and the following variance:
$$
g_{\tau}^{2}h^{-1}\int_{\tau}^{T}L^{2}\left(\frac{t-\tau}{h}\right)dt
=
g_{\tau}^{2}\int_{0}^{\frac{t-\tau}{h}}L^{2}(s)ds
\to
g_{\tau}^{2} \iint K(x) K(y) C_\gamma (x, y) dx dy,\quad h \to 0_+,
$$
where $C(x, y) = \min(|x|, |y|)1_{\{ xy > 0 \}}$.
Therefore, $\tilde{S} | \mathcal{F}_{\tau} \rightarrow _D \mathcal{N}(0, \delta_2^2)$ where $\delta_2^2 = g_{\tau}^{2}\iint K(x) K(y) C_\gamma (x, y) dx dy$. 

We now {consider the} general two-sided kernel case. {Let 
\[   
\bar{L}(t) =
	\left\{
	\begin{array}{ll}
      	\int_{-\infty}^{t}K(u)du, & t \leq 0, \\
     	\int_t^{\infty}K(u)du, & t > 0, \\
	\end{array} 
\right. 
\]
and note that, by the integration by parts formula,} 
	\begin{align*}
		\bar{I}
		&:=
		h^{-1/2} {\int_{0}^{T}} K_h(t - \tau) (v_t - v_\tau) dt\\
		&=
		h^{-1/2}\left(\bar{L}\left(\frac{t-\tau}{h}\right)(v_{t}-v_{\tau})\Big|_{t=0}^{t=\tau}
		-
		\int_{0}^{\tau}\bar{L}\left(\frac{t-\tau}{h}\right)dv_{t}\right) 
		+
		\tilde{S} + o_P(1)\\
		&=
		h^{-1/2}\bar{L}\left(\frac{T-\tau}{h}\right)(v_{T}-v_{\tau})
		-
		h^{-1/2}\int_{0}^{\tau}\bar{L}\left(\frac{t-\tau}{h}\right)g_{t}dW_{t}
		+
		\tilde{S} + o_P(1)\\
		&=:
		\bar{R} - \bar{S} + \tilde{S} + o_P(1).
	\end{align*}
Same as {in} the one-sided kernel case, our assumptions imply $\bar{R} = o_P(1)$. For $\bar{S}$, we still consider the following approximation
		\[
			\widetilde{\bar{S}}
			:=
			h^{-1/2}g_{\tau}\int_{0}^{\tau}\bar{L}\left(\frac{t-\tau}{h}\right)dW_{t}
			=
			h^{-1/2} g_\tau \left( \int_{0}^{\tau - \sqrt{h}} + \int_{\tau - \sqrt{h}}^\tau \right) 
			\bar{L}\left(\frac{t-\tau}{h}\right)dW_{t}
			=: 
			\widetilde{\bar{S}}_1 + \widetilde{\bar{S}}_2,
		\]
and we still have $\bar{S} - \widetilde{\bar{S}} = o_P(1)$. It is also true that $\widetilde{\bar{S}}_1$ vanishes as $h \to 0$. This can be justified by considering its second moment as the following:
		\begin{align*}
		\mathbb{E}\left[{\widetilde{\bar{S}}^{2}_1}\right]
		=
		h^{-1} \mathbb{E} [g_{\tau}^2] \int_{0}^{\tau - \sqrt{h}} 
		\bar{L}^2 \left(\frac{t-\tau}{h}\right) dt
		\leq
		\mathbb{E} [g_{\tau}^2] \int_{-\infty}^{-h^{-1/2}} \bar{L}^2(x) dx
		=
		o(1), \quad h \to 0,
	\end{align*}
Therefore,  we have
\begin{align*}
\bar{I} 
= 
h^{-1/2} g_{\tau} \int_{\tau - \sqrt{h}}^{T}\bar{L}\left(\frac{t-\tau}{h}\right)dW_{t} + o_P(1)
=
h^{-1/2} g_{t - \sqrt{h}} \int_{\tau - \sqrt{h}}^{T}\bar{L}\left(\frac{t-\tau}{h}\right)dW_{t} + o_P(1)
=:
\widetilde{\bar{I}}_{h} + o_P(1).
\end{align*}
where the second equality holds since
$$
{
\mathbb{E} \left[ \left| 
h^{-1/2} ( g_{t - \sqrt{h}} - g_{\tau} ) \int_{\tau - \sqrt{h}}^{T}\bar{L}\left(\frac{t-\tau}{h}\right)dW_{t}
\right| \right]^{2}
\leq 
\mathbb{E} \left[ ( g_{t - \sqrt{h}} - g_{\tau} )^2 \right]
\mathbb{E} \left[ h^{-1} \int_{\tau - \sqrt{h}}^{T}\bar{L}^2 \left(\frac{t-\tau}{h}\right)dt \right]
= 
o(1), \quad h\to 0}.
$$
{Note that $\mathcal{L}(\widetilde{\bar{I}}_h | g_{\tau} )
\to
\mathcal{N}\left(0,g_{\tau}^{2}\iint K(x)K(y) C(x, y) dxdy \right)$} since
\begin{align*}
	\mathbb{E}\left[\exp\left( iu\widetilde{\bar{I}}_{h} \right) \right]
	&=
	\mathbb{E}\left[\mathbb{E}\left[ \exp\left( iu\widetilde{\bar{I}}_{h} \right) \Big|\mathcal{F}_{\tau-\sqrt{h}}\right]\right]
	=
	\mathbb{E}\left[ \exp\left( -\frac{u^2 g_{ \tau-\sqrt{h} }^{2}}{2h}\int_{\tau-\sqrt{h}}^{T}\bar{L}^2\left(\frac{t-\tau}{h}\right)dt \right) \right]\\
	&=
	\mathbb{E}\left[ \exp\left( -\frac{u^2 g_{\tau-\sqrt{h} }^{2}}{2}\int_{-h^{-1/2}}^{\frac{T-\tau}{h}}\bar{L}^{2}\left(s\right)ds \right) \right]
	\stackrel{h\to{}0}{\longrightarrow}
	\mathbb{E}\left[ \exp\left( -\frac{u^2 g_{\tau}^{2}}{2}\int_{-\infty}^{\infty}\bar{L}^{2}\left(s\right)ds \right) \right] \\
	&= 
	\mathbb{E}\left[ \exp\left( -\frac{u^2 g_{\tau}^{2}}{2} \iint K(x)K(y) C(x, y) dxdy \right) 
	\right],
\end{align*}		
and, finally, we conclude that 
$
\mathcal{L}(\bar{I}| g_{\tau})\to
\mathcal{N}\left(0,g_{\tau}^{2}\iint K(x)K(y) C(x, y) dxdy \right)
$, as $h\to 0$. 

\textbf{(2)}
We then move on to consider case (2).
In the whole proof, the superscript $(Z)$ means that the quantity corresponds to process $Z$, while quantities without such a superscript corresponds to {the} process $\sigma^2$. {Let us start by noting that,} since $Z$ is a Gaussian process, we have
$$
h^{-\gamma/2} \left( \int_0^T K_h(t - \tau) (Z_t - Z_\tau) dt \right) 
\rightarrow_D 
\left( L^{(Z)}(\tau) \iint K(x)K(y) C^{(Z)}_\gamma (x,y) dxdy \right)^{1/2} N(0, 1).
$$
Now, for any $\epsilon \in (0, \min (\tau, T - \tau))$, and for any $t \in (\tau - \epsilon, \tau + \epsilon)$, there exists $s_t \in (\min(t, \tau), \max(t, \tau))$, such that $\sigma_t^2 - \sigma_\tau^2 = f^\prime(Z_\tau)(Z_t - Z_\tau) + \frac{1}{2} f^{\prime \prime} (Z_{s_t})(Z_t - Z_\tau)^2$. Then, we have
\begin{equation*}
\begin{split}
\int_0^T K_h(t - \tau) (\sigma_t^2 - \sigma_\tau^2) dt
=
\int_{\tau - \epsilon}^{\tau + \epsilon} K_h(t - \tau) [f^\prime(Z_\tau)(Z_t - Z_\tau) + \frac{1}{2} f^{\prime \prime} (Z_{s_t})(Z_t - Z_\tau)^2] dt + o(h^{\gamma / 2}).
\end{split}
\end{equation*}
For the second term, once we select $\epsilon$ small enough such that $\mathbb{E}[(f^{\prime \prime} (Z_{t}))^2] < M^2$ and $\mathbb{E}[{(Z_t - Z_\tau)^4}] \leq M |t - \tau|^\gamma$ for all $t \in (\tau - \epsilon, \tau + \epsilon)$, we have that
\begin{equation*}
\begin{split}
\mathbb{E} \left| \int_{\tau - \epsilon}^{\tau + \epsilon} K_h(t - \tau) f^{\prime \prime} (Z_{s_t})(Z_t - Z_\tau)^2 dt \right|
& \leq
\int_{\tau - \epsilon}^{\tau + \epsilon} |K_h(t - \tau)| \sqrt{\mathbb{E}[(f^{\prime \prime} (Z_{s_t}))^2] \mathbb{E}[(Z_t - Z_\tau)^4}] dt \\ 
& \leq
3 M^2 \int_{\tau - \epsilon}^{\tau + \epsilon} |K_h(t - \tau)| |t - \tau|^\gamma
=
O(h^\gamma) = o(h^{\gamma / 2}).
\end{split}
\end{equation*}
Now for the first term, we have
\begin{equation*}
\begin{split}
h^{-\gamma/2} \int_{\tau - \epsilon}^{\tau + \epsilon} K_h(t - \tau) [f^\prime(Z_\tau)(Z_t - Z_\tau)] dt
& =
f^\prime(Z_\tau) h^{-\gamma/2} \int_{\tau - \epsilon}^{\tau + \epsilon} K_h(t - \tau) (Z_t - Z_\tau) dt + o(1) \\
& \rightarrow _D
f^\prime(Z_\tau) \left( L^{(Z)}(\tau) \iint K(x)K(y) C^{(Z)}_\gamma (x,y) dxdy \right)^{1/2} N(0, 1).
\end{split}
\end{equation*}
where the standard normal $N(0, 1)$ appearing above is independent from $Z_\tau$. {The latter} convergence in distribution can be justified similar as Proposition \ref{proposition:C_gamma_of_function_of_gaussian_process}. Write 
$$
X = Z_\tau, \quad Y(h) = h^{-\gamma/2} \int_{\tau - \epsilon}^{\tau + \epsilon} K_h(t - \tau) (Z_t - Z_\tau) dt.
$$
We have that $(X, Y(h))$ is bi-variate normal for all $h > 0$ and, thus, {whenever the limit $(X, Y(h)) \rightarrow (X, Y)$ exists, $(X, Y)$ is bivariate normal  variable. There exist} $\alpha(h)$ and $\beta(h)$ such that $Y(h) = \alpha(h) X + \beta(h) Z(h)$, such that $X$ is independent with $Z(h)$ and $Z(h) \sim N(0, 1)$, for some $\alpha : \mathbb{R}_+ \rightarrow \mathbb{R}$ and $\beta : \mathbb{R}_+ \rightarrow \mathbb{R}_+$, as $h \rightarrow 0$. Note that $\alpha(h)$ and $\beta(h)$ are given by
$$
\alpha(h) = \frac{\mathbb{E}[XY(h)]}{\mathbb{E}[X^2]}, \quad \beta^2(h) = \mathbb{E}[Y^2(h)] - \alpha^2(h) \mathbb{E}[X^2].
$$
By our assumption, we have $\mathbb{E}[XY(h)] = o(1)$, which {implies that}
$$
\alpha(h) = o(1), \quad 
\beta^2(h) = \mathbb{E}[Y^2(h)] + o(1) = L^{(Z)}(\tau) \iint K(x)K(y) C^{(Z)}_\gamma (x,y) dxdy + o(1).
$$
With such representations, we are able to obtain the desired result:
$$
f^{\prime}(X)Y(h) 
= 
\alpha(h) f^{\prime}(X) X + \beta(h) f^{\prime}(X) Z(h)
=
o_p(1) + {\beta(h)} f^{\prime}(X) Z(h)
\rightarrow_D 
\beta f^{\prime}(X) Z.
$$
Here, $\beta^2 = \lim_{h \rightarrow 0} \beta^2(h) = L^{(Z)}(\tau) \iint K(x)K(y) C^{(Z)}_\gamma (x,y) dxdy$ and $Z$ is independent from $X$.
\end{proof}

\begin{remark}
It is {worth noting that 
in both of the cases covered in Theorem \ref{thm:CLT_Error2}, we have 
$$
\mathbb{E} [\delta_2^2] 
=
L^{(\sigma^2)}(\tau) \iint K(x)K(y) C^{(\sigma^2)}_\gamma (x,y) dxdy,
$$
which is exactly the coefficient of the second term appearing in \eqref{Approximated_MSE_Formula}. Similarly, the mean of the asymptotic conditional variance in the CLT of Theorem \ref{thm:CLT_Error1}, $E(\delta_{1}^{2})$, coincides with the coefficient of the first term in \eqref{Approximated_MSE_Formula}. Therefore, the CLT obtained from Theorems \ref{thm:CLT_Error1} and \ref{thm:CLT_Error2} are consistent with the asymptotic behavior of the MSE derived in Theorem \ref{Approximated_MSE_Theorem}. Note, however, that the framework of Theorem \ref{Approximated_MSE_Theorem} is more general.}
\end{remark}

\section{Simulation Results}\label{Section_SimulationResults}

In this section, we perform some simulation studies to further investigate the results that we developed before and compare our method with methods proposed in previous literature. Throughout, we will consider the Heston model (\ref{HestonTransformed}). As to the parameters values, we adopt the following widely used setting (cf. \cite{Two_Time_Scale}), unless otherwise specified: 
\[
	\kappa = 5,\quad  \theta = 0.04, \quad \xi = 0.5, \quad \mu_t = 0.05 - V_t / 2.
\] 
The initial values are set to be $X_0 = 1, \sigma_0^2 = 0.04$. We also assume both a non-leverage setting ($\rho=0$), as required by our Assumption \ref{IndependentCondition}, and a negative leverage situation ($\rho=-0.5$) to investigate the robustness of our method against non-zero $\rho$ values.

We will consider several different scenarios of observed data. First of all, we consider 5 days and 21 days data, which correspond to 1 week and 1 month data. For each trading day, we consider 6.5 trading hours and as {for} the frequency of the observations, we consider 5 minutes data and 1 minute data. Take 21 trading days and 5 minutes data, for example, totally we have $n = 21 \times 6.5 \times 12 + 1 = 1639$ observations and $T = 21/252$.

In order to alleviate boundary effect, we use estimator (\ref{Kernel_Estimator_with_Boundary_Effect}) throughout all the simulation. For each simulated discrete skeleton $\{ X_{t_i} : 0 \leq i \leq n, t_i = iT/n \}$, we estimate the corresponding discrete-skeleton of the variance process $\{ \sigma^2_{t_i} : 0 \leq i \leq n \}$, and calculate the average of the squared errors, $ASE = \frac{1}{n - 2 l + 1} \sum_{i = l}^{n - l} (\hat{\sigma}_{t_i}^2 - \sigma_{t_i}^2)^2$, for each simulation. (We incorporate an $l > 0$ to focus on evaluating the performance of the estimator without boundary effects. $l$ is generally taken to be $[0.1n]$.) Then, we take the sample average of such ASE's to estimate the mean ASE, defined as ${{\rm MASE}} = \mathbb{E}\left[ \frac{1}{n - 2 l + 1} \sum_{i = l}^{n - l} (\hat{\sigma}_{t_i}^2 - \sigma_{t_i}^2)^2 \right]$.

\subsection{Bandwidth Selection with Plug-In Method}\label{section_simulation_plug_in}

In this section, we investigate the plug-in method that we developed in {Sections \ref{section_bandwidth_selection} and \ref{sec:Plug-inBandwidthSelection}}. In the first part, we will see how the number of iterations, as described in Algorithm \ref{Algorithm_Iterative}, affects the MASE of the kernel estimator. In the second part, we compare {our plug-in method with} the leave-one-out cross-validation method proposed in \cite{kristensen2010nonparametric}. In the third part, we investigate the performance of the TSRVV estimator of volatility of volatility proposed in Section \ref{sec:VolVolEst}.

\subsubsection{Number of Iterations}\label{sec:nIterationSimulation}

{Let us start by investigating} how the number of iterations can affect the accuracy of the plug-in type kernel estimation of spot volatility. An exponential kernel is implemented. Table \ref{Table_nIterations} shows the MASE of the kernel estimator when we use 0 to 5 iterations for the plug-in method, where 0 iteration means we only use the initial value of the bandwidth given by \eqref{Optimal_Bandwidth_Initial_Guess} to estimate the volatility. 

From the results, we first observe that the initial guess has unstable performance, as one may expect. We also find out that after the initial guess, the MASE does not change a lot, and gradually move to a specific fix point value. It is interesting to notice that, after the first iteration, MASE does not always decrease as the number of iteration increases. This indeed makes sense, since our approximated optimal bandwidth and all the estimated parameters have errors, so the fix point that the estimator converges to, after several iterations, might be slightly different from the true optimal value, and it is possible that the initial guess leads to some bandwidth that performs better than the fix point. In conclusion, after one or at most two iterations, there is no significant performance enhancement with more iterations.

\begin {table}
\begin{center}

\center{\textbf{5 Days Data}}
\newline\newline
\begin{tabular}{ | l | l | l | l | l | l | l | l | }
\hline
nData/h	&	$\rho$	&	0	&	1	&	2	&	3	&	4	&	5
 \\
\hline
12	&	0	&	2.5664	&	2.5241	&	2.5482	&	2.5747	&	2.5804	&	2.5845
 \\
\hline
60	&	0	&	1.2180	&	1.0132	&	1.0100	&	1.0138	&	1.0150	&	1.0154
 \\
\hline
12	&	-0.5	&	2.5792	&	2.5177	&	2.5494	&	2.5742	&	2.5810	&	2.5842
 \\
\hline
60	&	-0.5	&	1.2336	&	1.0238	&	1.0206	&	1.0237	&	1.0248	&	1.0248
 \\
\hline
\end{tabular}
\\

\center{\textbf{21 Days Data}}
\newline\newline
\begin{tabular}{ | c | l | l | l | l | l | l | l | }
\hline
nData/h	&	$\rho$	&	0	&	1	&	2	&	3	&	4	&	5
 \\
\hline
12	&	0	&	2.8439	&	2.3712	&	2.3607	&	2.3620	&	2.3626	&	2.3625
 \\
\hline
60	&	0	&	1.3265	&	1.0454	&	1.0385	&	1.0379	&	1.0373	&	1.0375
 \\
\hline
12	&	-0.5	&	2.8923	&	2.4088	&	2.4006	&	2.4051	&	2.4055	&	2.4055
 \\
\hline
60	&	-0.5	&	1.3335	&	1.0459	&	1.0395	&	1.0391	&	1.0388	&	1.0388
 \\
\hline
\end{tabular}
\\

\caption {Comparison of Different Number of Iterations for the Plug-In Method (MASE$\times 10^{-5}$, 10000 sample paths)}
\label{Table_nIterations}
\end{center}
\end {table}

\subsubsection{Comparison Between Plug-In Method and Cross-Validation}

In this part, we compare the plug-in method with the cross-validation method under the three different sampling scenarios described above. In Table \ref{Table_CompareBandwidth}, we report the MASE for the kernel estimator obtained by the different methods. The first column is the plug-in method, where we use the approximated homogeneous optimal bandwidth (\ref{OptimalBandwidthGlobal}) with parameters estimated as proposed in Section \ref{sec:Plug-inBandwidthSelection}. Concretely, we use the formula 
\begin{equation}\label{General_Ito_Process_Bannwidth_Finalb}
h^{a,opt}_{n,j+1} = \left[ \frac{2 T \widehat{IQ(X)} \int K^2(x)dx} { n \widehat{IV\left(\widehat{\sigma}^{2}_{\cdot,j}\right)}^{tsrvv} \iint K(x)K(y) C_1 (x,y) dxdy} \right] ^{1/2},
\end{equation}
to find $\widehat{\sigma}_{\cdot,j+1}$, where $\widehat{\sigma}_{\cdot,j}$ is the estimated volatility at the $j^{th}$-iteration.
In the second column, we report the result for the leave-one-out cross validation. In the third column, we report the result for an oracle plug-in method, where the true $\sigma$ and $\xi$ are used to compute $\int_{0}^{T}\sigma_{\tau}^{4}d\tau$ and $\int_{0}^{T}g^{2}(\tau)d\tau=\xi^{2}\int_{0}^{T}\sigma_{\tau}^{2}d\tau$ in the formula (\ref{OptimalBandwidthGlobal}). Concretely, we use the formula
\begin{equation*}
\begin{split}
\bar{h}^{a, opt}_n = &
\left[ \frac{2 T \int_{0}^{T}\sigma_{\tau}^4d\tau \int K^2(x)dx} { n \xi^{2}\int_{0}^{T}\sigma^{2}_{\tau}d\tau \iint K(x)K(y) C_1 (x,y) dxdy} \right] ^{1/2}.
\end{split}
\end{equation*}
The  final column shows a ``semi-oracle" result, which assumes the knowledge of the volatility of volatility $\xi$ of the Heston model, but not $\sigma$. That is, the formula 
\begin{equation*}
\begin{split}
\bar{h}^{a, opt,semi}_{n+1} = &
\left[ \frac{2 T \int_{0}^{T}\widehat{\sigma}_{\tau,j}^4d\tau \int K^2(x)dx} { n \xi^{2}\int_{0}^{T}\widehat{\sigma}^{2}_{\tau,j}d\tau \iint K(x)K(y) C_1 (x,y) dxdy} \right] ^{1/2},
\end{split}
\end{equation*}
is used to compute the volatility $\widehat{\sigma}_{\cdot,j+1}$ at the $(j+1)^{th}$ iteration.

For this simulation, we only sample 2000 paths, since the cross-validation method is very time consuming. However, we do believe that the result is representative, since for each sample path, the ASE that we calculate already kills a lot of noises.

{\Red
As expected, the plug-in method runs significantly faster than cross validation. As to the accuracy of the kernel estimator, simulation results show that, in almost all sampling frequencies, the plug-in method outperforms the cross-validation method. However, we do observe that {for} 1 month and 1 minute data case, the cross validation is slightly better than the plug-in method. This is {due to the inaccuracy of the estimation of the vol vol for this sampling setting and the lack of optimal tuning of the estimation parameters for the vol vol estimator}. Indeed, when there are fewer data, the plug-in method outperforms cross validation significantly in accuracy. And when there are more data, the computational efficiency becomes a crucial issue. Although both methods tend to have similar performance in accuracy, plug-in method has superior advantage in speed.

It is worth to notice that, in all cases, there is still significant loss of accuracy for the plug-in method compared to the oracle ones. From the two oracle results, it can be easily observed that such a loss of accuracy is mainly due to the estimation error of the volatility of volatility. Further investigation of the estimation of the volatility of volatility is an interesting and important topic for future research.
}

\begin {table}
\begin{center}
\center{\textbf{5 Days Data}}
\newline\newline
\begin{tabular}{ | l | l | l | l | l | l | }
\hline
nData/h & $\rho$ & $MASE_{PI}$ & $MASE_{CV}$ & $MASE_{oracle}$ & $MASE_{semi-oracle}$ \\
\hline
12	&	0	&	1.0796E-07	&	1.3386E-07	&	9.1266E-08	&	9.0402E-08
\\
\hline
60	&	0	&	7.1439E-09	&	8.0542E-09	&	6.7286E-09	&	6.7074E-09
\\
\hline
12	&	-0.5	&	1.0296E-07	&	1.4180E-07	&	9.2620E-08	&	9.2009E-08
\\
\hline
60	&	-0.5	&	7.3872E-09	&	8.2567E-09	&	6.9356E-09	&	6.9060E-09
\\
\hline
\end{tabular}

\center{\textbf{21 Days Data}}
\newline\newline
\begin{tabular}{ | l | l | l | l | l | l | }
\hline
nData/h & $\rho$ & $MASE_{PI}$ & $MASE_{CV}$ & $MASE_{oracle}$ & $MASE_{semi-oracle}$ 
\\
\hline
12	&	0	&	1.9088E-08	&	2.1221E-08	&	1.8265E-08	&	1.8178E-08
\\
\hline
60	&	0	&	1.7064E-09	&	1.6868E-09	&	1.5984E-09	&	1.5961E-09
\\
\hline
12	&	-0.5	&	1.9039E-08	&	1.9495E-08	&	1.7587E-08	&	1.7506E-08
\\
\hline
60	&	-0.5	&	1.6652E-09	&	1.6011E-09	&	1.5509E-09	&	1.5505E-09
\\
\hline
\end{tabular}

\caption {Comparison of Different Bandwidth Selection Methods (MASE, 2000 sample paths)}
\label{Table_CompareBandwidth}
\end{center}
\end {table}

\subsubsection{Estimation of Volatility of Volatility}

In this section, we test the TSRVV estimator that we proposed in Section \ref{sec:VolVolEst}. We use one month data as demonstration, and, in order to see how the estimator performs with different sampling sequence, we consider 5 min and 1 min data. Since we are considering the Heston model, we will not report the integrated volatility of volatility, but instead, we report the following estimator of vol vol parameter $\xi$ of the Heston model: 
\[
	\widehat{\xi}:=\sqrt{\frac{\widehat{IVV}^{tsrvv}}{\widehat{IV}}}.
\]
 Generally, $\xi = 0.5$ is a rule of thumb value, but we will use $\xi = 0.2$ and $0.5$ to test the estimator.

The result is reported in Table \ref{Table_VolVolEst} and as we can see, the estimator performs better when when the sampling frequency increases or the value of $\xi$ get larger. However, it is also clear that estimation error is quite large, so further development of estimation of vol vol should be possible.

\begin {table}
\begin{center}
\begin{tabular}{ | l | l | l | l | l | l | }
\hline
nData/h	&	$\rho$	&	$\xi$	&	Bias	&	Std	&	$\sqrt{MSE}$
\\
\hline
12	&	0	&	0.2	&	-0.0006 	&	0.0990 	&	0.0990 
\\
\hline
12	&	0	&	0.5	&	-0.0584 	&	0.1979 	&	0.2063 
\\
\hline
60	&	0	&	0.2	&	-0.0122 	&	0.0772 	&	0.0782 
\\
\hline
60	&	0	&	0.5	&	-0.0411 	&	0.1549 	&	0.1603 
\\
\hline
12	&	-0.5	&	0.2	&	-0.0002 	&	0.0987 	&	0.0987 
\\
\hline
12	&	-0.5	&	0.5	&	-0.0571 	&	0.1984 	&	0.2065 
\\
\hline
60	&	-0.5	&	0.2	&	-0.0138 	&	0.0779 	&	0.0791 
\\
\hline
60	&	-0.5	&	0.5	&	-0.0443 	&	0.1551 	&	0.1613 
\\
\hline
\end{tabular}

\caption {Estimation of Volatility of Volatility by TSRVV (1 month data, 10000 sample paths)}
\label{Table_VolVolEst}
\end{center}
\end {table}

\subsection{Comparing Different Kernel Functions}
In this section, we compare the performance of different kernel functions. Specifically, we consider the following four different kernels:
\begin{equation*}
\begin{split}
K_1(x) = \frac{1}{2} e^{-|x|} , \quad
K_2(x) = \frac{1}{2} 1_{\{|x|<1\}} , 
\quad
K_3(x) = (1-|x|) 1_{\{|x|<1\}} , \quad
K_4(x) = \frac{3}{4} (1-x^2) 1_{\{|x|<1\}} .
\end{split}
\end{equation*}
The first kernel is the optimal kernel we obtained previously. The other three kernels are finite domain kernels with different order of polynomial. The fourth kernel is the so called Epanechnikov kernel, which is claimed to be the optimal kernel in \cite{kristensen2010nonparametric}.
In the formula for optimal bandwidth, (\ref{OptimalBandwidth}), we encounter some constants that depends on the kernel $K$. As a summary, we calculate them for all the four kernels in Table \ref{Table_Constant_For_Kernels}. The results of the simulation are shown by Table \ref{Table_Comparison_of_Different_Kernels}. Here we consider both the case of $\rho = 0, \rho = -0.5$, and plug-in method with uniform bandwidth. Note that the estimator becomes considerably slow for some kernels, we only simulate 2000 sample paths.

\begin {table}
\begin{center}
\begin{tabular}{ | l | l | l | l | }
\hline
Kernel & $\int_0^\infty K^2(x) dx$ & $L(x) = \int_x^\infty K(s) ds $ &$\int_0^\infty \int_0^\infty K(x) K(y) \min (x, y) dx dy$ \\
\hline
$K_1 = \frac{1}{2} e^{-|x|}$ & $1/8$ & $1/2e^{-x}$ & $1/8$ \\
\hline
$K_2 = \frac{1}{2} 1_{\{|x|<1\}}$ & $1/4$ & $1/2(1-x)$ & $1/12$ \\
\hline
$K_3 = (1-|x|) 1_{\{|x|<1\}}$ & $1/12$ & $1/2(1 - x)^2$ & $1/20$ \\
\hline
$K_4 = \frac{3}{4} (1-x^2) 1_{\{|x|<1\}}$ & $3/10$ & $1/4 (x - 1)^2(x + 2)$ & $33 / 560$ \\
\hline
\end{tabular}
\caption {Some Constants for Different Kernel Functions}
\label{Table_Constant_For_Kernels}
\end{center}
\end {table}

As shown from the result, the exponential kernel performs the best in all cases. As the calculation we had in Example \ref{Example_Different_Kernels}, we can see that the second best kernel is the triangle kernel, since its shape is more similar to exponential kernel. Similarly, the uniform kernel performs the worst, since it is the farthest to the optimal exponential kernel. 

\begin {table}
\begin{center}
\begin{tabular}{ | c | l | l | l | l | l | }
\hline
length	&	$\rho$	&	exponential	&	uniform	&	triangle	&	Epanechnikov
\\
\hline
5 days	&	0	&	2.5974E-05	&	2.8721E-05	&	2.6441E-05	&	2.7085E-05
\\
\hline
5 days	&	-0.5	&	2.5233E-05	&	2.8252E-05	&	2.5759E-05	&	2.6490E-05
\\
\hline
21 days	&	0	&	2.3406E-05	&	2.8047E-05	&	2.4988E-05	&	2.5914E-05
\\
\hline
21 days	&	-0.5	&	2.3692E-05	&	2.8603E-05	&	2.5248E-05	&	2.6173E-05
\\
\hline
\end{tabular}

\caption {Comparison of Different Kernel Functions (5 min data, 2000 sample paths)}
\label{Table_Comparison_of_Different_Kernels}
\end{center}
\end {table}

\appendix
\renewcommand{\appendixname}{Appendix~\Alph{section}}

\section{Equivalence of {the} Approximated Optimal Bandwidth}\label{Sec:Equivalence}

In {Section} \ref{Estimator_and_Framework_Section}, we proposed several assumptions on the volatility processes, which, as shown {in Section \ref{section_bandwidth_selection}}, are enough to construct a well posed optimal kernel estimation {problem}. In this subsection, {we compare the performance of the resulting approximated optimal bandwidth to that of the true optimal bandwidth, whenever it exists.}

{In what follows, $h^*_n$ stands for the ``the true" optimal bandwidth, which is defined to ``minimize" the actual MSE of the kernel estimator, $MSE_n(h) = \mathbb{E}[(\hat{\sigma}^2_{\tau,n,h} - \sigma^2_{\tau} )^2]$. However, since the mapping $h\to{}MSE_n(h)$ is not continuous, it is possible that such a global minimum might not exist or be unique. Hence, in what follows, $h^{*}_{n}$ is an extended nonnegative real number such that $h^{*}_{n}=\lim_{p\to{}\infty} h^{*}_{np}$ for a sequence $\{h^{*}_{np}\}_{p\geq{}1}$  satisfying that $MSE_n(h^{*}_{np})<\inf_{h \in \mathbb{R}_+} MSE_n(h)+\varepsilon_{p}$ and a sequence $\{\varepsilon_{p}\}_{p\geq{}1}$ of positive reals converging to $0$. Let us also recall that $h^{a,opt}_n$ denotes the approximated optimal bandwidth given by (\ref{OptimalBandwidth}).}
Our goal is to find the relationship between $h^*_n$ and $h^{a,opt}_n$, and between {$MSE_n(h^*_n)$ and $MSE_n(h^{a,opt}_n)$}.

Such a problem is in general {hard since} the estimator is not continuous with respect to the bandwidth $h$, when the kernel function $K(\cdot)$ is not continuous in $\mathbb{R}$, which is an important case since kernel functions with finite supports are frequently used in practice (e.g., the uniform kernel function $K_{unif}(x) =1_{[-1, 1]}(x)$). Indeed, when $h \rightarrow (t_{i - 1} - \tau)_-$, the summation appearing in (\ref{Kernel_Estimator}) does not include $K_h(t_{i - 1} - \tau) (\Delta_i X)^2$, while it does include this term when $h \rightarrow (t_{i - 1} - \tau)_+$.
Although it maybe hard to directly analyze the true MSE analytically, {its the first order approximation} is given by (\ref{Approximated_MSE}) and such an approximation is continuous with respect to $h$ for given $n$, which makes the problem still tractable. However, the approximated MSE is expected to be {close} to the true MSE only when $\frac{\Delta}{h}, h \rightarrow 0$, but not in other situations, i.e., $h \nrightarrow 0$ or $\frac{\Delta}{h} \nrightarrow 0$. {As we will show below, the latter situations are, however,} irrelevant when the model under consideration is complex enough. It is worth to remark that typical non-parametric statistical problems consider parameter spaces that are at least as complex as $C^1([0,T])$. However, when the parameter space shrinks to a more trivial case, non-parametric methods may not perform as good as other simpler methods. Hence, in order to rule out some trivial cases, we do need an additional assumption on the complexity of the the model. The following assumption turns out to be enough for our purpose:
\begin{assumption}\label{Complexity_of_Parameters}
Assume that for any $t \in (0,T)$, the mapping $(r,s) \mapsto \mathbb{E}[(\sigma_r^2 - \sigma_t^2)(\sigma_s^2 - \sigma_t^2)], r,s \in [0,T]$ is positive definite, for any fixed $t \in (0,T)$.
\end{assumption}

It is worth mentioning here that Assumption \ref{Complexity_of_Parameters} is not necessary for the kernel estimator to be a consistent estimator or {to possess} the convergence rate given by (\ref{OptimalMSE}) with the choice of approximated optimal bandwidth given by (\ref{OptimalBandwidth}). Such an assumption is solely for the purpose {of ruling} out trivial models so that we can compare the approximated optimal bandwidth with the true optimal bandwidth.

{We also need the following simple lemma}:
\begin{lemma}\label{Lemma_of_NonPredictable_And_Positive_Definiteness}
For the model (\ref{Asset_Dynamic}) satisfying Assumptions \ref{IndependentCondition}, it is not possible to have $t \in (0, T)$, $n \in \mathbb{N}_+$ and $(\alpha_1,...,\alpha_n) \in \mathbb{R}^n$, such that $\sum_{i = 1}^n \alpha_i (\Delta_i X)^2 = \sigma^2_t$ a.s. 
\end{lemma}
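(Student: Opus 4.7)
The plan is to argue by conditioning on the $\sigma$-algebra $\mathcal{F}^{\mu,\sigma}$ generated by the paths $\{\mu_s,\sigma_s\}_{s\in[0,T]}$, and then to rule out the supposed identity via a conditional variance computation. By Assumption \ref{IndependentCondition}, $B$ is still a standard Brownian motion under the conditional law given $\mathcal{F}^{\mu,\sigma}$. Hence, setting $M_i:=\int_{t_{i-1}}^{t_i}\mu_s\,ds$ and $V_i:=\int_{t_{i-1}}^{t_i}\sigma_s^2\,ds$, the increments $\Delta_i X$, $1\leq i\leq n$, are conditionally independent with $\Delta_i X\mid\mathcal{F}^{\mu,\sigma}\sim \mathcal{N}(M_i,V_i)$. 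Using the elementary identity $\mathrm{Var}(Y^2)=2V^2+4M^2V$ for $Y\sim \mathcal{N}(M,V)$, together with conditional independence across $i$, one obtains
\begin{equation*}
\mathrm{Var}\!\left(\sum_{i=1}^{n}\alpha_i(\Delta_iX)^2\,\Big|\,\mathcal{F}^{\mu,\sigma}\right)=\sum_{i=1}^{n}\alpha_i^2\,V_i\,(2V_i+4M_i^2),\quad \mathbb{P}\text{-a.s.}
\end{equation*}

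The second step is to exploit the assumed identity. Since $\sigma_t^2$ is $\mathcal{F}^{\mu,\sigma}$-measurable, the equality $\sum_i\alpha_i(\Delta_iX)^2=\sigma_t^2$ a.s.\ forces the above conditional variance to vanish almost surely, whence $\alpha_i^2 V_i(2V_i+4M_i^2)=0$ a.s.\ for every $i$. Because $V_i\geq 0$ and $2V_i+4M_i^2\geq 2V_i$, this forces $\alpha_iV_i=0$ a.s.\ for each $i$; equivalently, whenever $\alpha_i\neq 0$ we must have $V_i=0$ a.s., i.e.\ $\sigma_s=0$ for Lebesgue-a.e.\ $s\in[t_{i-1},t_i]$ with probability one. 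Plugging this back, on the event $\{V_i=0\;\forall i\text{ with }\alpha_i\neq 0\}$ (of full probability), $\Delta_iX=M_i$ a.s.\ whenever $\alpha_i\neq0$, and so the identity reduces to the a.s.\ relation $\sigma_t^2=\sum_{i:\alpha_i\neq 0}\alpha_iM_i^2$, i.e.\ $\sigma_t^2$ is a deterministic function of the drift path $\mu$ alone.

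The final step is to derive a contradiction: this last relation is incompatible with the model \eqref{Asset_Dynamic} being non-trivial at time $t$. Indeed, under Assumption \ref{VolatilityConditionGeneral}, letting $r,s\downarrow 0$ in the covariance structure \eqref{SCDefinitionGeneral} yields a strictly positive local variance for $\sigma_t^2$ (unless $L(t)\equiv 0$ and the model is trivial), so $\sigma_t^2$ cannot coincide a.s.\ with any $\mathcal{F}^{\mu}$-measurable functional vanishing on a neighborhood of $t$ on $[t_{i-1},t_i]$. Equivalently, one may invoke Assumption \ref{Complexity_of_Parameters}, whose positive-definiteness rules out such a deterministic representation of $\sigma_t^2$. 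Combining the two steps yields the claim.

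The main obstacle is the bookkeeping in the third step: the conditional-variance argument cleanly rules out the stochastic (chi-squared) part, but the remaining deterministic identity $\sigma_t^2=\sum_i\alpha_iM_i^2$ has to be excluded using the non-degeneracy of the volatility dynamics. The cleanest route is to phrase the contradiction directly in terms of Assumption \ref{VolatilityConditionGeneral} (so that $V_i\equiv 0$ forces $L\equiv 0$ on the relevant intervals, contradicting $\alpha+\beta>1$ and the positivity of the limiting covariance $C_\gamma$), rather than relying on properties of $\mu$ in isolation.
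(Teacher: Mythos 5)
Your core argument is the same as the paper's: condition on the $\sigma$-field generated by the paths of $(\mu,\sigma)$, under which (by Assumption \ref{IndependentCondition}) the increments $\Delta_i X$ are independent Gaussians with conditional mean $M_i$ and variance $V_i$, so that $\sum_i\alpha_i(\Delta_iX)^2$ has a computable conditional variance while $\sigma_t^2$ is conditionally constant. The paper's proof stops at declaring these Gaussians ``non-trivial'' and $\sigma_t^2$ a ``non-zero constant''; your explicit formula $\sum_i\alpha_i^2V_i(2V_i+4M_i^2)$ pins down exactly when that declaration can fail, namely when $V_i=0$ a.s.\ for every $i$ with $\alpha_i\neq0$, and this extra care is warranted: under Assumption \ref{IndependentCondition} alone the lemma is actually false in such degenerate models (take $\sigma\equiv0$ on $[0,T/2]$ and $\sigma_s^2=(\int_0^{T/2}\mu_u\,du)^2$ thereafter, with $\mu$ random and independent of $B$), so some non-degeneracy must be invoked and your proof makes that implicit hypothesis visible. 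Two caveats on your closing step. First, the assertion that letting $r,s\downarrow0$ in \eqref{SCDefinitionGeneral} ``yields a strictly positive local variance'' is not correct as written — the right-hand side of \eqref{SCDefinitionGeneral} tends to zero — and what you actually need is the cleaner observation that $V_i=0$ a.s.\ on an interval makes $\sigma^2$ constant (zero) there, which then violates the positive definiteness in Assumption \ref{Complexity_of_Parameters} (your second, preferred route); the appeal to Assumption \ref{VolatilityConditionGeneral} alone does not suffice, since that assumption is satisfied with $L\equiv0$ by a degenerate model. Second, Assumptions \ref{VolatilityConditionGeneral} and \ref{Complexity_of_Parameters} are not hypotheses of the lemma as stated, so strictly speaking you are proving a slightly different (corrected) statement; this is legitimate in context, since the lemma is only used inside Proposition \ref{Finite_Sample_Property}, where both assumptions are in force.
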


\begin{proof}
If we define $\mathscr{G} = \sigma (\sigma_t:t \in [0,T])$, then it is enough to notice that the conditional distribution {$\{ \Delta_i X\}_{1 \leq i \leq n}$, given $\mathscr{G}$} is a {collection of} independent non-trivial Gaussian {variables,} while $\sigma^2_t|\mathscr{G} = \sigma^2_t$ is a non-zero constant.
\end{proof}

We now give a simple example {in which} the Assumption \ref{Complexity_of_Parameters} is not satisfied.
\begin{example}
For a complete filtered probability space $(\Omega, \mathscr{F}, \mathbb{F}=\{\mathscr{F}_t\}_{t\geq 0},\mathbb{P})$, we define an $\mathscr{F}$ measurable random variable $\xi \sim \mbox{unif}(-c,c)$ and assume $\mathscr{F}_t = \sigma(\xi,B_s:s \leq t)$. Now we consider the following model for $t \in [0,T]$:
\begin{equation*}
\begin{split}
dX_t = \sigma_t dB_t , \quad
\sigma^2_t = \sigma^2_0 + \xi \sigma^2_0 \sin (\frac{2 \pi t}{T}),
\end{split}
\end{equation*}
where $\theta = (\sigma_0, c)$ is the parameter in the parameter space $\mathbb{R}_+ \times (0,1)$. Assumption \ref{VolatilityConditionGeneral} can be easily verified. Indeed, we have $\gamma = 2$, $C_\gamma(r,s) = rs$ and
$$
\mathbb{E} [ (\sigma^2_{t+r} - \sigma^2_{t})(\sigma^2_{t+s} - \sigma^2_{t}) ] = \frac{4\pi^2 \sigma_0^4 \mathbb{E}[\xi^2]}{T^2} rs + o(r^2 + s^2).
$$
We {now} consider the estimation of $\sigma^2_{T/2}$. For this model, we actually have
$
\sigma^2_{T/2} = \sigma_0^2 = \int_0^T \sigma_t^2 \cdot \frac{1}{T} dt
$.
We then consider the estimator 
$$
\hat{\sigma}_{T/2}^2 = \frac{1}{T}\sum_{i = 1}^n (\Delta _i X)^2.
$$
The bias of such estimator is zero and the variance is given by
\begin{equation*}
\begin{split}
Var\left( \frac{1}{T}\sum_{i = 1}^n (\Delta _i X)^2 \right)
& =
\frac{1}{T^2} \left( \sum_{i=1}^n \mathbb{E}[(\Delta _i X)^4] + \sum_{i \neq j} \mathbb{E}[(\Delta _i X)^2(\Delta _j X)^2] - \sigma_{1/2}^4 \right) \\&
=
\frac{2}{T^2}
\sum_{i=1}^n \mathbb{E} \left( \int_{t_{i - 1}}^{t_i} \sigma_t^2 dt \right)^2
=
O(n^{-1}).
\end{split}
\end{equation*}
Note that we use the uniform kernel but we do not use a bandwidth that vanishes. The convergence rate here, $O(n^{-1})$, is better than the one stated in Theorem \ref{Approximated_MSE_Theorem} when we consider the kernel estimation with a vanished bandwidth. It is even better than the convergence rate if we use any ``higher order" kernel. Therefore, for this model, a kernel estimator with {vanishing} bandwidth does not have good performance.
\end{example}

With the additional assumption of model complexity, we are now able to show that the only possibility for the MSE of the kernel estimator to converge to zero is that both $\frac{\Delta}{h}$ and $h$ converge to zero.
\begin{proposition}\label{Finite_Sample_Property}
Define $\{ (n_k, h_k) : k \in \mathbb{N} \}$ such that $n_k \in \mathbb{N}$ and $h_k \in \mathbb{R}_+$ and suppose that the model (\ref{Asset_Dynamic}) satisfies Assumptions \ref{IndependentCondition}, \ref{Boundedness_Condition}, \ref{VolatilityConditionGeneral} and \ref{Complexity_of_Parameters}, and that the kernel function $K$ satisfies Assumption \ref{AdmissibleKernel}. Then, $\lim_{k \rightarrow \infty} {\mbox{MSE}(n_k,h_{n_k})} = 0$ if and only if $\lim_{k \rightarrow \infty} \frac{T}{n_k h_{n_k}} = 0$ and $\lim_{k \rightarrow \infty} h_{n_k} = 0$.
\end{proposition}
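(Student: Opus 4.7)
The ``only if'' direction is the substantive content, since the ``if'' direction is already contained in Corollary \ref{Corollary_Convergence_of_Estimator}. I would proceed by contrapositive: by passing to a subsequence, I may assume that both $h_{n_k}$ and $\Delta_{n_k}/h_{n_k}$ converge in $[0,+\infty]$, and I would then show that $\mbox{MSE}(n_k,h_{n_k}) \not\to 0$ whenever at least one of these two limits is nonzero. This splits the argument into five cases: (1) $h\to \infty$; (2) $\Delta/h \to \infty$; (3) $h\to h_0\in(0,\infty)$ and $\Delta/h \to C\in(0,\infty)$ (so $n$ is eventually constant); (4) $h\to h_0\in(0,\infty)$ and $\Delta/h\to 0$; and (5) $h\to 0$ and $\Delta/h\to C\in(0,\infty)$.

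Two generic lower bounds drive the analysis. Using Assumption \ref{IndependentCondition} together with the bound $\int_{t_{i-1}}^{t_i} \mathbb{E}[\mu_t]dt = O(\Delta)$, the bias lower bound takes the form
\[
\mbox{MSE}(n,h) \;\geq\; (\mathbb{E}[\hat\sigma^2_\tau] - \mathbb{E}[\sigma^2_\tau])^2
\;=\; \left(\sum_{i=1}^n K_h(t_{i-1}-\tau)\int_{t_{i-1}}^{t_i}\mathbb{E}[\sigma^2_t]dt - \mathbb{E}[\sigma^2_\tau] + O(\Delta)\right)^2,
\]
while the variance lower bound $\mbox{MSE}(n,h)\geq \mbox{Var}(\hat\sigma^2_\tau - \sigma^2_\tau)$ forces the necessary condition $\mbox{Cov}(\hat\sigma^2_\tau,\sigma^2_\tau)\to \mbox{Var}(\sigma^2_\tau)$ whenever MSE converges to zero. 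Cases (1) and (2) fall to the bias bound alone. In case (1), the scaling $K_h(x)=K(x/h)/h$ and boundedness of $\mathbb{E}[\sigma^2_t]$ force $\sum K_h(t_{i-1}-\tau)\Delta \to 0$, so the bias approaches $-\mathbb{E}[\sigma^2_\tau]\ne 0$. In case (2), the tail condition $K(x)x^{\gamma+1}\to 0$ from Assumption \ref{AdmissibleKernel} yields $|K(x)|\leq C_\varepsilon|x|^{-1-\gamma}$ for $|x|>\varepsilon$, and summing the resulting geometric-like tail gives $\sum K_h(t_{i-1}-\tau)\Delta \to 0$ when no grid point coincides with $\tau$; if some $t_{i_0}=\tau$, then the single term $K(0)\Delta/h$ blows up. Either way, the bias cannot vanish.

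The harder cases are (3)--(5), and here Assumption \ref{Complexity_of_Parameters} and Lemma \ref{Lemma_of_NonPredictable_And_Positive_Definiteness} become essential. In case (3), $n$ is bounded so along a subsequence it is constant; dominated convergence (using $|K_h(\cdot)|\leq M_K/h$) lets me pass the limit inside to obtain $\mbox{MSE}(n_k,h_{n_k})\to \mathbb{E}[(\sum_{i=1}^n \alpha_i(\Delta_iX)^2 - \sigma^2_\tau)^2]$ with $\alpha_i = K_{h_0}(t_{i-1}-\tau)$, which by Lemma \ref{Lemma_of_NonPredictable_And_Positive_Definiteness} is strictly positive. For cases (4) and (5), I would use the variance route: a direct computation gives
\[
\mbox{Cov}(\hat\sigma^2_\tau,\sigma^2_\tau) = \sum_{i=1}^n K_h(t_{i-1}-\tau)\int_{t_{i-1}}^{t_i} \mbox{Cov}(\sigma^2_t,\sigma^2_\tau)\,dt,
\]
and, invoking Lemmas \ref{D1Lemma}--\ref{D2Lemma} together with the regularity of $K$ (in particular the total variation bound $V^\infty_{-\infty}(|K'|)<\infty$, which produces Riemann-sum errors of order $\Delta/h$), this limit equals $\int K_{h_0}(t-\tau)\mbox{Cov}(\sigma^2_t,\sigma^2_\tau)dt$ in case (4), while in case (5) it has an analogous finite representation. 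Positive definiteness of $(r,s)\mapsto \mathbb{E}[(\sigma^2_r-\sigma^2_\tau)(\sigma^2_s-\sigma^2_\tau)]$ from Assumption \ref{Complexity_of_Parameters} then prevents these limits from equaling $\mbox{Var}(\sigma^2_\tau)$, contradicting the necessary covariance condition.

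The main obstacle I anticipate is case (5): $h\to 0$ produces a singular kernel, yet $\Delta/h$ stays bounded away from zero, so the sums $\sum K_h(t_{i-1}-\tau)\Delta$ and $\sum K_h(t_{i-1}-\tau)\int_{t_{i-1}}^{t_i} f(t)dt$ are neither true Riemann sums for $\int K$ nor negligible. Carefully quantifying their limits, using the total variation and Lipschitz properties of $K$ from Assumption \ref{AdmissibleKernel} to bound sampling errors, and then invoking Assumption \ref{Complexity_of_Parameters} in the appropriate (possibly discretized) covariance functional, will require the most care.
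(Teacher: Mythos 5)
Your overall architecture — contrapositive, passing to subsequences so that $h$ and $\Delta/h$ both converge in $[0,\infty]$, the bias lower bound $\mbox{MSE}\geq(\mathbb{E}[\hat\sigma^2_\tau]-\mathbb{E}[\sigma^2_\tau])^2$ for the cases $h\to\infty$ and $\Delta/h\to\infty$, dominated convergence plus Lemma \ref{Lemma_of_NonPredictable_And_Positive_Definiteness} when $n$ is eventually constant, and Assumption \ref{Complexity_of_Parameters} when $h\to h_0>0$ with $\Delta\to0$ — coincides with the paper's proof. For the case $h\to h_0$, $\Delta/h\to 0$, the paper argues more directly than your covariance route: it shows $\hat\sigma^2_\tau\to\int_0^TK_{h_0}(t-\tau)\sigma^2_t\,dt$ in $L^2$ and then invokes Assumption \ref{Complexity_of_Parameters} to see that $\mathbb{E}\bigl[\bigl(\int_0^TK_{h_0}(t-\tau)\sigma^2_t\,dt-\sigma^2_\tau\bigr)^2\bigr]>0$; note that positive definiteness is naturally applied to the \emph{quadratic} form $\iint K_{h_0}(r-\tau)K_{h_0}(s-\tau)\,\mathbb{E}[(\sigma^2_r-\sigma^2_\tau)(\sigma^2_s-\sigma^2_\tau)]\,dr\,ds$, whereas deducing $\int K_{h_0}(t-\tau)\mathrm{Cov}(\sigma^2_t,\sigma^2_\tau)\,dt\neq \mathrm{Var}(\sigma^2_\tau)$ from it, as your linear covariance identity requires, is not immediate.

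The genuine gap is your case (5), $h\to0$ with $\Delta/h\to C\in(0,\infty)$. The necessary condition $\mathrm{Cov}(\hat\sigma^2_\tau,\sigma^2_\tau)\to\mathrm{Var}(\sigma^2_\tau)$ that you propose to violate can in fact be \emph{satisfied} in this regime: since the kernel weights localize at scale $h\to0$ and $\mathrm{Cov}(\sigma^2_t,\sigma^2_\tau)\to\mathrm{Var}(\sigma^2_\tau)$ as $t\to\tau$, one gets $\sum_iK_h(t_{i-1}-\tau)\int_{t_{i-1}}^{t_i}\mathrm{Cov}(\sigma^2_t,\sigma^2_\tau)\,dt\approx\mathrm{Var}(\sigma^2_\tau)\sum_i\frac{\Delta}{h}K\bigl(\frac{t_{i-1}-\tau}{h}\bigr)$, and the fixed-mesh sum $\sum_i\frac{\Delta}{h}K\bigl(\frac{t_{i-1}-\tau}{h}\bigr)$ may perfectly well tend to $1$ along a suitable sequence, so no contradiction is produced, and Assumption \ref{Complexity_of_Parameters} is not the right tool here. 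The obstruction in this regime is instead the non-vanishing discretization variance: conditionally on $\sigma(\mu,\sigma)$ the squared Brownian increments are independent, so
$\mathrm{Var}\bigl[\hat\sigma^2_\tau\,\big|\,\sigma(\mu,\sigma)\bigr]=2\sum_iK_h^2(t_{i-1}-\tau)\bigl(\int_{t_{i-1}}^{t_i}\sigma^2_t\,dt\bigr)^2$, which is of order $\frac{\Delta}{h}\sum_i\frac{\Delta}{h}K^2\bigl(\frac{t_{i-1}-\tau}{h}\bigr)$; with $\Delta/h\to C>0$ either this sum is bounded away from zero (so the variance of the error cannot vanish) or $\sum_iK^2\bigl(\frac{t_{i-1}-\tau}{h}\bigr)\to0$, in which case $\hat\sigma^2_\tau\to0\neq\sigma^2_\tau$. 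This is exactly how the paper closes the case, and you should replace your covariance argument by this conditional-variance lower bound.
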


We {defer} the proof of Proposition \ref{Finite_Sample_Property} {to} Appendix \ref{More_Technical_Details}. As we can see from Proposition \ref{Finite_Sample_Property}, the kernel estimator only converges when the sample size $n \rightarrow \infty$. The following lemma enables us to consider the relationship between $h^*_n$ and $h^{a,opt}_n$, whose proof is again {given} in Appendix \ref{More_Technical_Details}.
\begin{lemma}\label{Approximation_Bandwidth_Lemma}
Assume $F:\mathbb{R}_+ \times \mathbb{R}_+ \rightarrow \mathbb{R}_+$ and $f(x,y) = Ax + By^\gamma$ for $A, B, \gamma > 0$, such that $F(x,y) - f(x,y) = o(x) + o(y^\gamma)$ as $x,y\rightarrow (0^+, 0^+)$. Also assume that for all $\delta > 0$, there exists $m > 0$, such that for all $x, y> \delta$, we have $F(x,y) > m$. Suppose $z_n \searrow 0$ and, for each $n\in \mathbb{N}_+$, $y_n$ and $y^*_n$ minimize $y \mapsto f(z_n/y,y)$ and $y \mapsto F(z_n/y,y)$, respectively. Then we have
$$
\inf_{y \in \mathbb{R}_+} F(z_n / y, y) \rightarrow 0, 
\quad 
y_n = y^*_n + o(y^*_n), 
\quad 
F(z_n/y_n, y_n) = \inf_{y \in \mathbb{R}_+}F(z_n / y, y) + o(\inf_{y \in \mathbb{R}_+} F(z_n / y, y)), 
$$
as $n \rightarrow \infty$. Note that $F$ might not be continuous, so we say that $y^*_n$ minimize $y \mapsto F(z_n/y,y)$ in the sense that 
there exists $\{y^*_{np}: p \in \mathbb{N}_+\}$, such that $\lim_{p \rightarrow \infty} y^*_{np} = y_n^*$ and $\lim_{p \rightarrow \infty} F(z_n / y^*_{np}, y^*_{np}) = \inf_{y \in \mathbb{R}_+} F(z_n / y, y)$. Note that by assumptions on $F$ and $f$, $y_n^*$ is finite for $n$ large enough.
\end{lemma}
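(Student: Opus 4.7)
The plan is to first reduce the problem to analysing the ratio $t_n := y_n^{*}/y_n$ via an explicit formula for $y_n$, and then show that this ratio must tend to $1$ by comparing $F$-values through the approximation. As a preliminary step, minimising $y \mapsto f(z_n/y, y) = A z_n/y + B y^\gamma$ via first-order calculus yields
\[ y_n = \left(\frac{A}{B\gamma}\right)^{1/(\gamma+1)} z_n^{1/(\gamma+1)}, \qquad f(z_n/y_n, y_n) = \left(1 + \tfrac{1}{\gamma}\right) B^{1/(\gamma+1)} (A\gamma)^{\gamma/(\gamma+1)}\, z_n^{\gamma/(\gamma+1)}, \]
together with the algebraic identity $A z_n/y_n = B \gamma y_n^\gamma$. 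In particular $y_n \to 0$ and $z_n/y_n \to 0$. Using $y = y_n$ as a test point and the remainder estimate $F - f = o(x) + o(y^\gamma)$, one obtains $F(z_n/y_n, y_n) = f(z_n/y_n, y_n)(1 + o(1)) = O(z_n^{\gamma/(\gamma+1)}) \to 0$, which proves $\inf_y F(z_n/y, y) \to 0$.

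The main technical obstacle is then to show that the (almost-)minimiser $y_n^*$ lies in the asymptotic regime where the approximation is valid, i.e. $y_n^* \to 0$ and $z_n/y_n^* \to 0$. Since $F(z_n/y_n^*, y_n^*) \to 0$, the uniform lower bound in the hypothesis rules out the case in which both $y_n^*$ and $z_n/y_n^*$ stay above some $\delta > 0$. The remaining cases, where $y_n^*$ stays bounded away from $0$ (or diverges) while $z_n/y_n^* \to 0$, are excluded by comparing the test value $F(z_n/y_n, y_n) = O(z_n^{\gamma/(\gamma+1)})$ with the fact that $F(x,y)$ cannot vanish when $y$ is pinned inside a compact subset of $(0,\infty)$ — in the application, this is inherited from the nonnegativity and bias structure of the MSE, and I will state it as a consequence of a mild extension of the approximation guaranteeing $F(x,y) \geq B y^\gamma (1 + o(1))$ for $x$ small and $y$ in such a compact set. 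This step is the most delicate and depends on the global behaviour of $F$, not just on its local expansion.

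Once $y_n^*, z_n/y_n^* \to 0$ is in hand, write $t_n = y_n^*/y_n$ and use $A z_n/y_n = B \gamma y_n^\gamma$ to obtain the scale-free identity
\[ \frac{f(z_n/y_n^*, y_n^*)}{f(z_n/y_n, y_n)} = \frac{\gamma/t_n + t_n^\gamma}{\gamma + 1} =: \phi(t_n). \]
The function $\phi$ is continuous and strictly convex on $(0,\infty)$ with unique minimum $\phi(1) = 1$. By $\varepsilon_p$-optimality of $y_n^*$ and the approximation applied on both sides (valid because both $y_n$ and $y_n^*$ lie in the small-parameter regime), $\phi(t_n) \leq 1 + o(1)$; continuity and uniqueness of the minimum then force $t_n \to 1$, i.e. $y_n^* = y_n + o(y_n) = y_n(1 + o(1))$, which is equivalent to $y_n = y_n^* + o(y_n^*)$.

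The final equivalence of $F$-values is then routine. Combining the three asymptotic identities
\[ F(z_n/y_n, y_n) = f(z_n/y_n, y_n)(1+o(1)), \quad f(z_n/y_n, y_n) \leq f(z_n/y_n^*, y_n^*), \quad F(z_n/y_n^*, y_n^*) = f(z_n/y_n^*, y_n^*)(1+o(1)), \]
and observing $\inf_y F(z_n/y, y) \leq F(z_n/y_n, y_n)$, one sandwiches $F(z_n/y_n, y_n)$ between $\inf_y F$ and $\inf_y F\,(1+o(1))$, yielding the required $F(z_n/y_n, y_n) = \inf_y F(z_n/y, y) + o(\inf_y F(z_n/y, y))$.
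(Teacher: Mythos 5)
Your overall strategy coincides with the paper's: compute $y_n$ in closed form, sandwich $F$ between $(1\pm\epsilon)f$ on a small box $(0,\delta)\times(0,\delta)$, localize the near-minimizer of $F$ to that box, and then use the scale-free ratio $f(z_n/(ty_n),ty_n)/f(z_n/y_n,y_n)=(\gamma/t+t^{\gamma})/(\gamma+1)=\phi(t)$ to force $y_n^{*}/y_n\to 1$. Your explicit $\phi$ computation is in fact a cleaner rendering of the paper's terse ``since this is true for all $\epsilon>0$'' step; note only that $\phi$ is not convex on $(0,\infty)$ when $\gamma<1$ --- what you actually need, and what does hold for all $\gamma>0$, is that $\phi$ has a unique minimum at $t=1$ with $\phi(1)=1$ and $\phi(t)\to\infty$ as $t\to 0^{+}$ or $t\to\infty$.

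The one place you deviate is the localization step, and there you have talked yourself into an unnecessary extra hypothesis. You read ``$F(x,y)>m$ for all $x,y>\delta$'' as applying only when both coordinates exceed $\delta$, and therefore could not exclude near-minimizers with $y_n^{*}$ bounded away from $0$ while $z_n/y_n^{*}\to 0$; you propose to patch this with a new assumption of the form $F(x,y)\geq By^{\gamma}(1+o(1))$. The paper's proof uses the hypothesis as a lower bound on the whole complement of the box, i.e.\ $F(x,y)>m$ whenever $(x,y)\notin(0,\delta)\times(0,\delta)$; this is also what the hypothesis must mean for the lemma to be true (the literal both-coordinates reading is too weak, exactly for the reason you identify), and it is what Proposition \ref{Finite_Sample_Property} delivers in the application. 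With that reading the localization is immediate: since $\inf_{y}F(z_n/y,y)\leq(1+\epsilon)\min_{y}f(z_n/y,y)<m$ for $n$ large, any $\varepsilon_p$-minimizing sequence must eventually lie in $I_n(\delta)=\{y:(z_n/y,y)\in(0,\delta)\times(0,\delta)\}$, which gives $y_n^{*}\leq\delta$ and $z_n/y_n^{*}\leq\delta$ simultaneously, and no global bound of the form $By^{\gamma}(1+o(1))$ is needed. One last small point: because $F$ may be discontinuous, $F(z_n/y_n^{*},y_n^{*})$ need not equal $\inf_{y}F(z_n/y,y)$, so your final sandwich should be run along the approximating points $y^{*}_{np}$ (using continuity of $f$), exactly as the paper does with its auxiliary $y_n^{**}$.
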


\begin{remark}
The result $F(z_n/y_n, y_n) = F(z_n / y^*_n, y^*_n) + o(F(z_n / y^*_n, y^*_n))$ is quite important for our purpose. When connected to the kernel estimator, it means that the departure of approximated bandwidth from the true optimal bandwidth will not significantly affect the true MSE of the kernel estimator.
\end{remark}

With these in hand, we are ready for the result of the relationship between the approximated optimal bandwidth and the true optimal bandwidth.
\begin{theorem}\label{Optimal_Bandwidth_vs_True_Optimal}
For model (\ref{Asset_Dynamic}) with $\mu$ and $\sigma$ satisfying Assumptions \ref{IndependentCondition}, \ref{Boundedness_Condition}, \ref{VolatilityConditionGeneral} and \ref{Complexity_of_Parameters} and a kernel function $K(x)$ satisfying assumption \ref{AdmissibleKernel}, we have
\begin{equation}
\begin{split}
h^{a,opt}_n & = h^*_n + o(h^*_n),\\
{MSE_n}(h^{a,opt}_n) & = \inf_h {MSE_n(h)} + o(\inf_h {MSE_n(h)}),
\end{split}
\end{equation}
where the superscript ``$*$" denotes the true optimal bandwidth and MSE, while ``$a$" denotes the approximated ones.
\end{theorem}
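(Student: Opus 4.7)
The strategy is to verify the hypotheses of Lemma \ref{Approximation_Bandwidth_Lemma} and then invoke it. Under the identification $x=\Delta/h$ and $y=h$, I will view the MSE as a function $F(x,y):=MSE_n(h)$ with the implicit relation $n=T/(xy)$, and I will take the approximating function to be $f(x,y)=Ax+By^{\gamma}$ where $A=2\mathbb{E}[\sigma_{\tau}^{4}]\int K^{2}(x)dx$ and $B=L(\tau)\iint K(x)K(y)C_{\gamma}(x,y)dxdy$. With this setup, $y_n=h^{a,opt}_n$ is indeed the minimizer of $y\mapsto f(z_n/y,y)$ with $z_n=T/n$, as can be checked against formula \eqref{OptimalBandwidth} in Proposition \ref{OptimalBandwidthProposition}, and $y^*_n=h^*_n$ is by definition the minimizer of $y\mapsto F(z_n/y,y)=MSE_n(y)$.

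The first hypothesis of Lemma \ref{Approximation_Bandwidth_Lemma}, namely $F(x,y)-f(x,y)=o(x)+o(y^{\gamma})$ as $(x,y)\to(0^+,0^+)$, is a direct consequence of Theorem \ref{Approximated_MSE_Theorem}, which gives exactly this expansion of the MSE with the precise constants $A$ and $B$ identified above. The positivity $A,B>0$ is guaranteed by Assumption \ref{Boundedness_Condition} and condition (4) of Assumption \ref{AdmissibleKernel}, respectively.

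The substantive step is verifying the uniform lower bound: for each $\delta>0$, there exists $m>0$ such that $F(x,y)\geq m$ for all $x,y>\delta$. I will argue this by contradiction using Proposition \ref{Finite_Sample_Property}. If the bound fails for some $\delta>0$, we can extract sequences $(x_k,y_k)$ with $x_k,y_k\geq\delta$ and $F(x_k,y_k)\to 0$. Setting $h_k=y_k$ and choosing $n_k$ so that $T/n_k\approx x_ky_k$ (which entails $T/(n_kh_k)\geq \delta/2$ eventually and $h_k\geq\delta$), we would obtain a sequence $(n_k,h_k)$ along which $MSE(n_k,h_k)\to 0$ while neither $T/(n_kh_k)\to 0$ nor $h_k\to 0$, contradicting the ``only if'' direction of Proposition \ref{Finite_Sample_Property}. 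This is the step where Assumption \ref{Complexity_of_Parameters} is essential, as it is precisely the hypothesis that powers Proposition \ref{Finite_Sample_Property} and rules out trivial models in which MSE could vanish without both scales shrinking.

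Once both hypotheses are checked, Lemma \ref{Approximation_Bandwidth_Lemma} applied to $z_n=T/n\searrow 0$ delivers simultaneously $h^{a,opt}_n=h^*_n+o(h^*_n)$ and $MSE_n(h^{a,opt}_n)=\inf_h MSE_n(h)+o(\inf_h MSE_n(h))$, which are the two claims of the theorem. One minor technical nuisance worth flagging is that $n$ is discrete so $F$ is naturally only defined on the fibres $\{(x,y):xy=T/n,\,n\in\mathbb{N}\}$; however, Lemma \ref{Approximation_Bandwidth_Lemma} only evaluates $F$ along the sequence $z_n=T/n$, so this causes no issue, and the non-uniqueness/non-continuity of $h^*_n$ is already built into the ``limit of approximating minimizers'' convention adopted in the statement of the lemma.
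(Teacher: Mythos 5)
Your proposal follows essentially the same route as the paper's proof: both write $MSE_n(h)=F(\Delta/h,h)$ with $f(x,y)=Ax+By^{\gamma}$ taken from the expansion in Theorem \ref{Approximated_MSE_Theorem}, verify the uniform lower bound via Proposition \ref{Finite_Sample_Property}, and then apply Lemma \ref{Approximation_Bandwidth_Lemma} with $z_n=T/n$. Your explicit contradiction argument for the lower bound and the remark on the discreteness of $n$ merely spell out steps the paper leaves implicit, so the argument is correct and not a genuinely different approach.
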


\begin{proof}
Now we write $MSE^*(n, h) = F(\frac{\Delta}{h},h)$ and $MSE^{a}(n, h) = f(\frac{\Delta}{h},h^\gamma) = A\frac{\Delta}{h} + Bh^\gamma$ where the value of $A$ and $B$ can be found in (\ref{Approximated_MSE}). From Theorem \ref{Approximated_MSE_Theorem} and Proposition \ref{Finite_Sample_Property}, we know that $F(x,y)$ and $f(x,y)$ satisfy {the} requirements by Lemma \ref{Approximation_Bandwidth_Lemma}, where $z_n = \Delta = \frac{T}{n}$. Then, it is immediate to obtain the desired result.
\end{proof}

\begin{remark}
The theorem above also tells us a fact, that under our model setting, the kernel estimator generally perform better when we observe more data, i.e. the frequency of observation is higher. This seems to be an obvious fact, but is not always true. In the case of using realized variance to estimate the Integrated Volatility with market micro-structure noise, as proved in \cite{Two_Time_Scale}, there is an optimal frequency of the data. In such case, increasing the frequency does not yield better performance in general.
\end{remark}

\section{{Proofs}}\label{Sect:TecProof}
\subsubsection*{Proof of {Proposition} \ref{BMProcesses}}

We consider $h > 0$ in what follows, while the case of $h < 0$ is similar. Using the boundedness of $\mathbb{E}[f^2(t,\omega)]$ and continuity of $\mathbb{E}[g^2(t,\omega)]$, we have
\begin{align*}
\mathbb{E}[(V_{t+h}-V_t)^2]
  &=
\mathbb{E} \left[ \int_{t}^{t+h} f(s,\omega)ds \right] ^2
+ 2 \mathbb{E} \left[ \int_{t}^{t+h} f(s,\omega)ds \int_{t}^{t+h} g(s,\omega) dB_s \right] 
+ \mathbb{E} \left[ \int_{t}^{t+h} g^2(s,\omega) ds \right] \\
& = h \mathbb{E} [ g^2(t,\omega) ] + o(h), \quad h\rightarrow 0 ,
\end{align*}
where in the last equality we used {that}
\begin{align*}
\mathbb{E} \left[ \int_{t}^{t+h} f(s,\omega)ds \right] ^2
& \leq 
h \mathbb{E} \left[ \int_{t}^{t+h} f^2(s,\omega)ds \right] 
= 
h \int_{t}^{t+h} \mathbb{E} [ f^2(s,\omega) ] ds 
=
O(h^2), \\
\mathbb{E} \left[ \int_{t}^{t+h} f(s,\omega)ds \int_{t}^{t+h} g(s,\omega) dB_s \right]
& \leq
	\sqrt{ 
	\mathbb{E} \left[ \int_{t}^{t+h} f(s,\omega)ds \right] ^2
	\mathbb{E} \left[ \int_{t}^{t+h} g(s,\omega) dB_s \right] ^2
	}
=
O(h^{3/2}), \\
{h^{-1} \mathbb{E} \left[ \int_{t}^{t+h} g^2(s,\omega) ds \right] - \mathbb{E}\left[g^{2}(t,\omega)\right]}& = o(1),
\end{align*}
for $h \rightarrow 0_+$.
Now, for {$r> 0$ and $t>s>0$}, we have
\begin{align*}
& | \mathbb{E}[(V_{t+r}-V_t)(V_t-V_{t-s})] | \\
& = 
\left| \mathbb{E}
	\left[ \int_{t}^{t+r} f(s,\omega)ds + \int_{t}^{t+r} g(s,\omega) dB_s \right] 
	\left[ \int_{t-s}^{t} f(s,\omega)ds + \int_{t-s}^{t} g(s,\omega) dB_s \right]
 \right| \\
& \leq
\left| \mathbb{E} 
	\int_{t}^{t+r} f(s,\omega)ds 
	\int_{t-s}^{t} f(s,\omega)ds
\right|
+
 \left| \mathbb{E}
	\int_{t}^{t+r} g(s,\omega)dB_s
	\int_{t-s}^{t} f(s,\omega)ds
\right| \\&\quad 
+
 \left| \mathbb{E}
	\int_{t}^{t+r} f(s,\omega)ds
	\int_{t-s}^{t} g(s,\omega)dB_s
\right|
+
 \left| \mathbb{E}
	\int_{t}^{t+r} g(s,\omega)dB_s
	\int_{t-s}^{t} g(s,\omega)dB_s
\right|
\\
& \leq
\sqrt{ 
	\mathbb{E} \left[ \int_{t}^{t+r} f(s,\omega)ds \right]^2
	\mathbb{E} \left[ \int_{t-s}^{t} f(s,\omega)ds \right]^2
 }
+
\sqrt{
	\mathbb{E} \left[ \int_{t}^{t+r} f(s,\omega)ds \right]^2
	\mathbb{E} \left[ \int_{t-s}^{t} g(s,\omega)dB_s \right]^2
}
\\
& \leq
A_1 rs + A_2 r\sqrt{s}
\leq 
A r\sqrt{s} ,
\end{align*}
for some constant $A_1, A_2$ and $A$. Note that $A$ can be made uniform over $t \in (0,T)$ due to boundedness of $\mathbb{E}[f^2(t, \omega)]$ and continuity of $\mathbb{E}[g^2(t,\omega)]$. {Finally,} for $r>s>0$, we have
\begin{equation*}
\begin{split}
& \mathbb{E}[(V_{t+r}-V_t)(V_{t + s}-V_{t})]
=
\mathbb{E}[(V_{t + s}-V_{t})^2]
+
\mathbb{E}[(V_{t+r}-V_{t + s})(V_{t + s}-V_{t})] \\
& =
s \mathbb{E} [ g^2(t,\omega) ] + o(s)
+
O((r - s)\sqrt{s})
=
s \mathbb{E} [ g^2(t,\omega) ] + o(s)
+
O(r\sqrt{s})
=
s \mathbb{E} [ g^2(t,\omega) ] + o((r^2 + s^2)^{1/2}) .
\end{split}
\end{equation*}
Similar {arguments can be applied} for $r < s < 0$, {while the} case of $r < 0 < s$ can be proved by noticing that $r\sqrt{s} = o((r^2 + s^2)^{1/2})$. Therefore, in summary, we have proved {that 
Assumption \ref{VolatilityConditionGeneral} hold true} with $\gamma = 1$ and $C_1(r,s) = \min \{|r|, |s|\} 1_{\{rs \geq 0\}}$ and $L(t) = \mathbb{E} [ g^2(t,\omega) ]$. 

\noindent {It remains to prove that $C_1$ is positive definite. To that end, note that}
\begin{equation*}
\begin{split}
&\quad \iint K(r) K(s) \min \{|r|, |s|\} 1_{\{rs \geq 0\}} dr ds\\
& =
\int_{0}^{\infty} \int_{0}^{\infty} K(r) K(s) \min \{r, s\} drds
+
\int_{0}^{\infty} \int_{0}^{\infty} K(-r) K(-s) \min \{r, s\} drds \\
& =
\int_{0}^{\infty} \int_{0}^{\infty} \left[ K(r) K(s) + K(-r) K(-s) \right] \int_{0}^{\infty} 1_{\{t \leq r\}}1_{\{t \leq s\}}dt drds \\
& =
\int_0^{\infty} \left[ \int_0^{\infty} K(r) 1_{\{t \leq r\}} {dr}\right]^2 dt + \int_0^{\infty} \left[\int_0^{\infty} K(-r) 1_{\{t \leq r\}} {dr}\right]^2 dt,
\end{split}
\end{equation*}
{which is positive as long as $\int |K(x)| dx > 0$}.

\subsubsection*{Proof of {Proposition} \ref{stationaryfBM}}

{For easiness of notation we write $B^{H}$ and $Y^{H}$ instead of $B^{(H)}$ and $Y^{(H)}$.} Pipiras and Taqqu (2000) gave the following result:
\begin{equation}\label{KRel1}
\begin{split}
\mathbb{E}\left[ \int_{-\infty}^{\infty} g_1(u) dB_u^H \int_{-\infty}^{\infty} g_2(u) dB_u^H \right]
=
H(2H-1)\int_{-\infty}^{\infty} \int_{-\infty}^{\infty} g_1(u)g_2(v)|u-v|^{2H-2} dudv ,
\end{split}
\end{equation}
where $g_1$ and $g_2$ are assumed to be real valued function {satisfying the integrability condition (\ref{NCFEI}).}
We first use this results with $g_1(u) = 1_{[t,t+r]}(u),g_2(u) = 1_{[t,t+s]}(u)$, where $r,s \in \mathbb{R}$. We consider $r = s > 0$ first and we have
\begin{equation*}
\begin{split}
{\mathbb{E}\left[ \left( \int_{t}^{t+r} dB_u^H \right)^2 \right]
=
H(2H-1)\int_{t}^{t+r} \int_{t}^{t+r} |u-v|^{2H-2} dudv  =
2H(2H-1)\int_{t}^{t+r} \left( \int_{t}^{u} (u-v)^{2H-2} dv \right) du 
=
r^{2H}}.
\end{split}
\end{equation*}
For the case of $r>0>s$, we have
\begin{equation*}
\begin{split}
&\quad \mathbb{E}\left[ 
\left( \int_{t}^{t+s} dB_u^H \right) 
\left( \int_{t}^{t+r} dB_u^H \right)
\right]
=
- H(2H-1)\int_{t+s}^{t} \int_{t}^{t+r} |u-v|^{2H-2} dudv \\
& =
- H\int_{t+s}^{t} |t+r-v|^{2H-1} - |t-v|^{2H-1} dv
= 
\frac{1}{2} ( |r|^{2H} + |s|^{2H} - |r-s|^{2H}) .
\end{split}
\end{equation*}	
These two results can be combined to be
\begin{equation*} 
\begin{split}
\mathbb{E}\left[ 
\left( \int_{t}^{t+s} dB_u^H \right) 
\left( \int_{t}^{t+r} dB_u^H \right)
\right]
= 
\frac{1}{2} ( |r|^{2H} + |s|^{2H} - |r-s|^{2H})
=: C_{2H}(r,s;t),
\end{split}
\end{equation*}
for all {$r,s \in \mathbb{R}$. 
Next, we first assume that $f(t)>0$ and prove} the case of $r,s>0$. Other cases can be proved similarly. {Since $f$ is assumed to be continues, for any $\epsilon \in (0,f(t))$, let $\delta=\delta_{\varepsilon,t} > 0$, such that $\forall h \in (0,\delta)$, $|f(t+h) - f(t)|<\epsilon$}. Then, we have the following upper bound, {for any $0<r,s<\delta$}:
\begin{equation*}
\begin{split}
& \mathbb{E}\left[ \left( \int_{t}^{t+r} f(u) dB_u^H \right) \left( \int_{t}^{t+s} f(u) dB_u^H \right) \right]
=
\mathbb{E} \left[ \left( \int_{-\infty}^{\infty} f(u)1_{[t,t+r]} dB_u^H \right) \left( \int_{-\infty}^{\infty} f(u)1_{[t,t+s]} dB_u^H \right) \right] \\
 &\quad=
H(2H-1) \int_{t}^{t+r} \int_{t}^{t+s} f(u)f(v)|u-v|^{2H-2} dudv 
\leq
(f(t)+\epsilon)^2 H(2H-1)  \int_{t}^{t+r}\int_{t}^{t+s} |u-v|^{2H-2} dudv \\
&\quad {={}}
(f(t)+\epsilon)^2 C(r,s;t).
\end{split}
\end{equation*}
{A similar lower bound holds for {$0<r,s<\delta$} as the follows:}
\begin{equation*}
\begin{split}
\mathbb{E}\left[ \left( \int_{t}^{t+r} f(u) dB_u^H \right) \left( \int_{t}^{t+s} f(u) dB_u^H \right) \right]
\geq 
(f(t)-\epsilon)^2 C(r,s;t).
\end{split}
\end{equation*}
These two equations lead to the following {result:}
\begin{equation}\label{ELFBM}
\begin{split}
\lim_{r,s\rightarrow 0_+} C^{-1}(r,s;t) \mathbb{E}\left[ \left( \int_{t}^{t+r} f(u) dB_u^H \right) \left( \int_{t}^{t+s} f(u) dB_u^H \right) \right]
=
f^2(t) .
\end{split}
\end{equation}
{The case of $r,s\rightarrow 0_-$ and $f(t)\leq 0$ can be deduced similarly.}
This proves that the Assmption \ref{VolatilityConditionGeneral} is satisfied with $\gamma = 2H$ and $C_\gamma$ given by (\ref{fBM_Vol_Cov_Structure}).
The case of $\exp(Y_t^H)$ follows from Proposition \ref{proposition:C_gamma_of_function_of_gaussian_process}.

\subsubsection*{Proof of {Proposition} \ref{stationaryfOU}}

We {first consider $Y^H$ first and let $C_{2H}(r,s) = \frac{1}{2} (|r|^{2H} + |s|^{2H} - |r-s|^{2H})$}. The proof is quite similar to {Proposition} \ref{stationaryfBM}. Indeed, we have
\begin{equation*}
\begin{split}
& \mathbb{E}[(Y_{t+r}^H - Y_t^H)(Y_{t+s}^H - Y_t^H)] \\
= &
\sigma^2 \mathbb{E} \left[ \left(
\int_{-\infty}^{t+r} e^{-\lambda(t+r-u)}dB_u^H - \int_{-\infty}^t e^{-\lambda(t-u)}dB_u^H
\right) \left(
\int_{-\infty}^{t+s} e^{-\lambda(t+s-u)}dB_u^H - \int_{-\infty}^t e^{-\lambda(t-u)}dB_u^H
\right) \right] \\
= &
\sigma^2 \mathbb{E} \left[ 
\left(
(e^{-\lambda(t+r)} - e^{-\lambda t})\int_{-\infty}^{t+r} e^{\lambda u}dB_u^H
+
e^{-\lambda t}\int_{t}^{t+r} e^{\lambda u}dB_u^H
\right)
\left(
(e^{-\lambda(t+s)} - e^{-\lambda t})\int_{-\infty}^{t+s} e^{\lambda u}dB_u^H
+
e^{-\lambda t}\int_{t}^{t+{s}} e^{\lambda u}dB_u^H
\right) 
\right] \\
= & 
\sigma^2 C_{2H}(r,s) + o((r^2 + s^2)^{H}) ,
\end{split}
\end{equation*}
{where the last equality is a consequence of the following relationships, which in turn use (\ref{KRel1}):}
\begin{equation*}
\begin{split}
& \mathbb{E}\left[
(e^{-\lambda(t+r)} - e^{-\lambda t}) 
\left( \int_{-\infty}^{t+r} e^{\lambda u}dB_u^H \right)
(e^{-\lambda(t+s)} - e^{-\lambda t}) 
\left( \int_{-\infty}^{t+s} e^{\lambda u}dB_u^H \right)
\right] \\
&\quad  =
(e^{-\lambda r} - 1)(e^{-\lambda s} - 1)
\mathbb{E}\left[
e^{-\lambda t}
\left( \int_{-\infty}^{t+r} e^{\lambda u}dB_u^H \right)
e^{-\lambda t} 
\left( \int_{-\infty}^{t+s} e^{\lambda u}dB_u^H \right)
\right]
=  
O(rs) ,\\
& \mathbb{E} \left[
(e^{-\lambda r} - 1)e^{-\lambda t} \left( \int_{-\infty}^{t+r} e^{\lambda u}dB_u^H \right)
e^{-\lambda t} \left( \int_{t}^{t+s} e^{\lambda u}dB_u^H \right) \right] \\
&\quad  \leq
|e^{-\lambda r} - 1| \sqrt{\mathbb{E} \left[ \left(
e^{-\lambda t} \int_{-\infty}^{t+r} e^{\lambda u}dB_u^H
\right) ^2 \right]
\mathbb{E} \left[
\left(
e^{-\lambda t} \int_{t}^{t+s} e^{\lambda u}dB_u^H
\right)^2 \right]
}
=
O(s^Hr) ,\\
& \mathbb{E} \left[
e^{-\lambda t} \left( \int_{t}^{t+r} e^{\lambda u}dB_u^H \right) 
e^{-\lambda t} \left( \int_{t}^{t+s} e^{\lambda u}dB_u^H \right) \right]
=
C_{2H} (r,s) + o((r^2 + s^2)^{H}) .
\end{split}
\end{equation*}
{The last equality follows along the lines of the proof of (\ref{ELFBM}). This completes the proof of the first assertion. Once we notice $Y^H$ is also a Gaussian process, the proof of $\exp(Y^H)$} is similar as previous lemma.

\subsubsection*{{Proof of Proposition 
\ref{proposition:C_gamma_of_function_of_gaussian_process}}}
To begin with, since we assume that Assumption \ref{VolatilityConditionGeneral} is satisfied uniformly over $(0, T)$ and $\sup_{t\in (0, T)} |L(t)| < \infty$, we can use Kolmogorov-\v{C}entsov  continuity theorem to conclude that there is a continuous modification of $Z$ and, {thus, hereafter, we assume} that $\{Z_t\}_{t \in [0, T]}$ is a continuous process\footnote{ Indeed, for any $0<s<t<T$, we have $\mathbb{E}[(Z_{t} - Z_{s})^{2k}]= 
(2k - 1)!! (\mathbb{E} [(Z_{t} - Z_{s})^{2}] )^k\leq C |t - s|^{k \gamma}$, for some constant $C$, independent of $s$ and $t$. Then, we can conclude that there exists a modification of $Z$ that is H\"older continuous of order $(k\gamma - 1) / 2k$ and, thus, of any order less than $\gamma / 2$.}.
{Next}, by Taylor's expansion, there exists $\theta(\tau, r) \in ( \min(\tau, \tau + r), \max(\tau, \tau + r) )$ such that, a.s.,
$$
f(Z_{\tau + r}) - f(Z_{\tau}) = f^\prime(Z_\tau) (Z_{\tau + r} - Z_{\tau}) + f^{\prime\prime}(Z_{\theta(\tau, r)}) (Z_{\tau + r} - Z_{\tau})^2.
$$
Thus, we have the following decomposition
\begin{equation}\label{NEHP}
\begin{split}
(f(Z_{\tau + r}) - f(Z_{\tau}))(f(Z_{\tau + s}) - f(Z_{\tau}))
& =
(f^\prime(Z_\tau))^2  (Z_{\tau + r} - Z_{\tau}) (Z_{\tau + s} - Z_{\tau}) \\
&\quad +
f^\prime(Z_\tau) f^{\prime\prime} (Z_{\theta(\tau, s)}) (Z_{\tau + r} - Z_{\tau}) (Z_{\tau + s} - Z_{\tau})^2 \\
&\quad +
f^\prime(Z_\tau) f^{\prime\prime} (Z_{\theta(\tau, r)}) (Z_{\tau + s} - Z_{\tau}) (Z_{\tau + r} - Z_{\tau})^2 \\
&\quad +
f^{\prime\prime}(Z_{\theta(\tau, r)}) f^{\prime\prime} (Z_{\theta(\tau, s)}) (Z_{\tau + r} - Z_{\tau})^2 (Z_{\tau + s} - Z_{\tau})^2.
\end{split}
\end{equation}
Except for the first term, all other terms are of higher order. As an example, take the second term {and note that}
\begin{equation*}
\begin{split}
&\quad
\mathbb{E}|f^\prime(Z_\tau) f^{\prime\prime} ({Z_{\theta(\tau,r)}}) (Z_{\tau + r} - Z_{\tau}) (Z_{\tau + s} - Z_{\tau})^2| \\
& \quad\leq 
\left( 
\mathbb{E} [(f^\prime(Z_\tau))^4] 
\mathbb{E} [(f^{\prime\prime} ( Z_{\theta(\tau,s)}) )^4] 
\mathbb{E} [(Z_{\tau + r} - Z_{\tau})^4] 
\mathbb{E} [(Z_{\tau + s} - Z_{\tau})^8] 
\right)^{1/4} 
=
O( (r^2 + s^2)^{{3\gamma /4}}),
\end{split}
\end{equation*}
where the last equality uses (a) and the normality of $Z$.
Indeed, if we define $m_t = \mathbb{E}[Z_t]$ and $z_t = Z_t - m_t$, we have 
$$
\mathbb{E} [(Z_{\tau + r} - Z_{\tau})^4] 
=
\mathbb{E} [(z_{\tau + r} - z_{\tau})^4] 
+ 
6 \mathbb{E} [(z_{\tau + r} - z_{\tau})^2](m_{\tau + r} - m_{\tau})^2 
+
(m_{\tau + r} - m_{\tau})^4
= \mathbb{E} [(z_{\tau + r} - z_{\tau})^4] + o((r^2 + s^2)^{\gamma}).
$$
We proceed to consider the first term {of (\ref{NEHP})}. With similar argument as the above, we can assume, without loss of generality, that $Z$ has zero mean. Next, since $(Z_\tau, Z_{\tau + r}, Z_{\tau + s})$ are jointly Gaussian, we can define two independent standard normal variables $X(\tau, r, s)$ and $Y(\tau, r, s)$ that are also independent of $Z_{\tau}$ such that
$$
Z_{\tau + r} - Z_{\tau} = a_1 Z_\tau + a_2 X(\tau, r, s) + a_3 Y(\tau, r, s), \quad
Z_{\tau + s} - Z_{\tau} = b_1 Z_\tau + b_2 X(\tau, r, s) + b_3 Y(\tau, r, s),
$$
for some constants $a_{i}$ and $b_{i}$, $i=1,2,3$, depending on $\tau$, $r$, and $s$. Furthermore, $a_{1}$ and $b_{1}$ are such that
$$
a_1 = \frac{\mathbb{E}[(Z_{\tau + r} - Z_{\tau})Z_\tau]}{\mathbb{E}[Z_\tau^2]} = O(|r|), \quad
b_1 = \frac{\mathbb{E}[(Z_{\tau + s} - Z_{\tau})Z_\tau]}{\mathbb{E}[Z_\tau^2]} = O(|s|).
$$
Now, since $Z$ satisfies Assumption \ref{VolatilityConditionGeneral} and $O(|rs|) = o((r^2 + s^2)^{\gamma / 2})$, {we have}
$$
a_2b_2 \mathbb{E}[X(\tau, r, s)^2] + a_3b_3 \mathbb{E}[Y(\tau, r, s)^2]
=
\mathbb{E}[(Z_{\tau + r} - Z_{\tau})(Z_{\tau + {s}} - Z_{\tau})]
-
a_1b_1 \mathbb{E}[Z_\tau^2] 
=
C_\gamma (r, s) + o((r^2 + s^2)^{\gamma / 2}).
$$
{Finally,}
\begin{equation*}
\begin{split}
\mathbb{E}[
(f^\prime(Z_\tau))^2  (Z_{\tau + r} - Z_{\tau}) (Z_{\tau + s} - Z_{\tau})] 
& =
a_1b_1 \mathbb{E}[ (f^\prime(Z_\tau))^2 Z_{\tau}^2 ]
+
\mathbb{E}[ (f^\prime(Z_\tau))^2 ](
a_2b_2 \mathbb{E}[X(\tau, r, s)^2] 
+ 
a_3b_3 \mathbb{E}[Y(\tau, r, s)^2]
) \\
& =
\mathbb{E}[ (f^\prime(Z_\tau))^2 ] C_\gamma(r, s) + o((r^2 + s^2)^{\gamma / 2}),
\end{split}
\end{equation*}
and we conclude.

\subsubsection*{Proof of Lemma \ref{D1Lemma}}

We consider the case that $m = 1$, $K \in C((A,B))$, {and} $K$ is piecewise $C^1$ in $(A, B)$, where $A < 0 < B$. In the whole proof, all the summations are taken {under the additional constrain that} $(\frac{t_{i-1} - \tau}{h}, \frac{t_{i} - \tau}{h}) \in (A,B)$. {Note that this constraint introduces an additional term of order $o(\frac{\Delta}{h})$. (Indeed, this some times exclude a term at the right boundary.) We first assume that $K \in C^1((A, B))$, even though the same arguments apply for piecewise $C^1$ functions. First, note that}
\begin{equation*}
\begin{split}
D_1 & = \sum _{i=1}^n \left[ K_h(t_{i-1} - \tau) \int _{t_{i-1}}^{t_i} f(t) dt - \int _{t_{i-1}}^{t_i} K_h(t - \tau) f(t) dt \right] \\ 
	& = \frac{1}{h} \sum _{i=1}^n \int _{t_{i-1}}^{t_i} [ K( \frac{t_{i-1} - \tau}{h} ) - K( \frac{t - \tau}{h} ) ] f(t) dt
	= \frac{1}{h} \sum _{i=1}^n \int _{t_{i-1}}^{t_i} K^{\prime} ( \frac{s_t - \tau}{h} ) \frac{t_{i-1} - t}{h} f(t) dt \\
	& = \frac{1}{h} \sum _{i=1}^n \int _{t_{i-1}}^{t_i} K^{\prime} ( \frac{t_{i-1} - \tau}{h} ) \frac{t_{i-1} - t}{h} f(t) dt 
	 +
	\frac{1}{h} \sum _{i=1}^n \int _{t_{i-1}}^{t_i} [ K^{\prime} ( \frac{s_t - \tau}{h} ) - K^{\prime} ( \frac{t_{i-1} - \tau}{h} ) ] \frac{t_{i-1} - t}{h} f(t) dt \\
	& =: D_{11} + D_{12},
\end{split}
\end{equation*}
{for some $s_t \in (t_{i-1}, t)$}. In above, $D_{12}$ can be controlled as the following:
\begin{equation*}
\begin{split}
|D_{12}| \leq 
\frac{M_f \Delta^2}{2h^2} \sum _{i=1}^n \max_{t \in [t_{i - 1}, t_i]}| K^{\prime} ( \frac{t_{i-1} - \tau}{h} ) - K^{\prime} ( \frac{{t} - \tau}{h} ) |
\leq 
\frac{M_f \Delta^2}{2h^2} V_{{-\infty}}^{\infty}(K^{\prime})
= O(\frac{\Delta^2}{h^2}).
\end{split}
\end{equation*}
Note that if $K^\prime$ is not continuous at $t_{i}$, we use right derivative. For interval $(t_{i - 1}, t_i)$ that $K^\prime$ is not continuous, there still exists $K^*(t)$ such that $K( \frac{t_{i-1} - \tau}{h} ) - K( \frac{t - \tau}{h} ) = K^*(t) \frac{t_{i-1} - t}{h}$, and by Lipschitz's condition, the above control of $D_{12}$ is still valid, except that $ V_{0}^{\infty}(K^{\prime})$ is replaced by $ V_{0}^{\infty}(K^{\prime}) + 2pL$, where $p$ and $L$ are the number of non-continuous points and Lipschitz constant, respectively.

We then consider $D_{11}$. Indeed, for any $\delta \in (0,\min (T-\tau,\tau))$, we have
\begin{equation*}
\begin{split}
D_{11} = \frac{1}{h} \left( \sum _{ |t_i - \tau| < \delta } + \sum _{ \delta \leq |t_i - \tau| \leq T } \right) K^{\prime} ( \frac{t_{i-1} - \tau}{h} ) \int _{t_{i-1}}^{t_i} \frac{t_{i-1} - t}{h} f(t) dt 
	\triangleq D_{111} + D_{112} .
\end{split}
\end{equation*}
By assumptions, for $A > \sup_{\tau \in (0, T)} |l(\tau)|$, there exists $\delta _0 \in (0,\min (T-\tau,\tau))$, such that for all $t$ with $|t - \tau| < \delta_0$, we have
$$\frac{|f(t)-f(\tau)|}{|t-\tau|^\gamma} < A .$$
Now define $\epsilon (\delta) = A \delta ^\gamma$. Then, for all $\delta \in (0, \delta_0)$ and $|t - \tau| < \delta$, we have $|f(t) - f(\tau)| < \epsilon (\delta)$.

The term $D_{112}$ above can be controlled as the following:
\begin{equation*}
\begin{split}
| D_{112} |
	\leq 
	\frac{1}{h} \sum _{ \delta \leq |t_i - \tau| \leq T } | K^{\prime} ( \frac{t_{i-1} - \tau}{h} ) | \int _{t_{i-1}}^{t_i} \frac{ | t_{i-1} - t | }{h} f(t) dt 
	\leq 
	\frac{M_{f} \Delta }{2 h} \sum _{ |t_i - \tau| \geq \delta } \frac{\Delta}{h} | K^{\prime} ( \frac{t_{i-1} - \tau}{h} ) | .
\end{split}
\end{equation*}
The term $D_{111}$ can by controlled by the following inequality, when $\Delta < \delta_0$:
\begin{equation*}
\begin{split}
D_{111} 
	& = \frac{1}{h} \left( \sum _{ |t_i - \tau| < \delta , K^{\prime} \leq 0 } + \sum _{ |t_i - \tau| < \delta , K^{\prime} > 0 } \right) K^{\prime} ( \frac{t_{i-1}  - \tau}{h} ) \int _{t_{i-1}}^{t_i} \frac{t_{i-1} - t}{h} f(t) dt \\
	& \leq \frac{1}{h} \sum _{ |t_i - \tau| < \delta , K^{\prime} \leq 0 } K^{\prime} ( \frac{t_{i-1} - \tau}{h} ) \int _{t_{i-1}}^{t_i} \frac{t_{i-1} - t}{h} ( f(\tau) + \epsilon (\delta) ) dt \\
	&\quad +
	\frac{1}{h} \sum _{ |t_i - \tau| < \delta , K^{\prime} > 0 } K^{\prime} ( \frac{t_{i-1} - \tau}{h} ) \int _{t_{i-1}}^{t_i} \frac{t_{i-1} - t}{h} ( f(\tau) - \epsilon (\delta) ) dt \\
	& =
	- \frac{ f(\tau) \Delta }{ 2h } \sum _{ |t_i - \tau| < \delta } \frac{\Delta}{h} K^{\prime} ( \frac{t_{i-1} - \tau}{h} ) 
	+ \frac{  \epsilon (\delta) \Delta }{ 2h } \sum _{ |t_i - \tau| < \delta } \frac{\Delta}{h} | K^{\prime} ( \frac{t_{i-1} - \tau}{h} ) | 
	\triangleq \overline{D}_{111}.
\end{split}
\end{equation*}
Similarly, the lower bound can be written as
$$
D_{111} \geq
	- \frac{ f(\tau) \Delta }{ 2h } \sum _{ |t_i - \tau| < \delta } \frac{\Delta}{h} K^{\prime} ( \frac{t_{i-1} - \tau}{h} ) 
	- \frac{  \epsilon (\delta) \Delta }{ 2h } \sum _{ |t_i - \tau| < \delta } \frac{\Delta}{h} | K^{\prime} ( \frac{t_{i-1} - \tau}{h} ) | 
 	\triangleq \underline{D}_{111}.
$$
Now we can set $\delta = \sqrt{h}$ and we assume that $\delta < \delta_0$. In the following, all limits are taken when $h \rightarrow 0$, $\frac{\Delta}{h} \rightarrow 0$ and $\frac{\delta}{h}\rightarrow \infty$.

Firstly we consider $\sum _{ |t_i - \tau| < \delta } \frac{\Delta}{h} K^{\prime} ( \frac{t_{i-1} - \tau}{h} )$. Indeed, there exists $s_{i-1} \in ( t_{i-1},t_i )$, such that $\int _{(t_{i-1} - \tau)/h}^{(t_i - \tau)/h}K^{\prime} (x)dx = \frac{\Delta}{h} K^{\prime} ( \frac{s_{i-1} - \tau}{h} )$. Then, we have
\begin{equation*}
\begin{split}
\sum _{ |t_i - \tau| < \delta } \frac{\Delta}{h} K^{\prime} ( \frac{t_{i-1} - \tau}{h} )
	& = \sum _{ |t_i - \tau| < \delta } \int _{(t_{i-1} - \tau)/h}^{(t_i - \tau)/h}K^{\prime} (x)dx 
	+ \frac{\Delta}{h} \sum _{ |t_i - \tau| < \delta } \left[ K^{\prime} ( \frac{t_{i-1} - \tau}{h} ) - K^{\prime} ( \frac{s_{i-1} - \tau}{h} ) \right] \\
	& = \int _{ ( \delta^- / h , \delta^+ / h ) \cap (A,B)} K^{\prime} (x) dx + \frac{\Delta}{h} \sum _{ |t_i - \tau| < \delta } \left[ K^{\prime} ( \frac{t_{i-1} - \tau}{h} ) - K^{\prime} ( \frac{s_{i-1} - \tau}{h} ) \right] \\
	& = (K(B-)- K(A+)) + o (1).
\end{split}
\end{equation*}
since we have
$$
\frac{\Delta}{h} \sum _{ |t_i - \tau | < \delta } \left[ K^{\prime} ( \frac{t_{i-1} - \tau}{h} ) - K^{\prime} ( \frac{s_{i-1} - \tau}{h} ) \right] \leq \frac{\Delta}{h} V_{-\infty}^{\infty} (K^{\prime}) = O(\frac{\Delta}{h}).
$$
Here we define $\delta^+ = \max \{t_i - \tau : t_i<\tau + \delta\}, \delta^- = \min \{t_{i-1} - \tau : t_i>\tau - \delta\}$. Note that since $\frac{\Delta}{h} \rightarrow 0$, we have $\frac{\delta^+}{h}\rightarrow +\infty, \frac{\delta^-}{h}\rightarrow -\infty$, so we have $\int _{ ( \delta^- / h , \delta^+ / h ) \cap (A,B)} K^{\prime} (x) dx \rightarrow K(B-)- K(A+)$. Here we notice that in the case that $K^\prime$ is not continuous in some intervals, the constant $V_{-\infty}^{\infty} (K^{\prime})$ is replaced by$ V_{-\infty}^{\infty} (K^{\prime}) + 2pL$.

Then we consider $\sum _{ |t_i - \tau | \geq \delta } \frac{\Delta}{h} | K^{\prime} ( \frac{t_{i-1} - \tau}{h} ) | $. We note here the absolute integrability of $K^{\prime} (\cdot)$ and thus have the following
\begin{equation*}
\begin{split}
\sum _{ |t_i - \tau | \geq \delta } \frac{\Delta}{h} | K^{\prime} ( \frac{t_{i-1} - \tau}{h} ) |
	\leq & \sum _{ |t_i - \tau | \geq \delta } \int _{(t_{i-1} - \tau)/h}^{(t_i - \tau)/h} | K^{\prime} ( x ) | dx 
	+ \frac{\Delta}{h} \sum _{ |t_i - \tau | \geq \delta } | | K^{\prime} ( \frac{t_{i-1} - \tau}{h} ) | - | K^{\prime} ( \frac{s_{i-1} - \tau}{h} ) | | \\
	\leq & \left( \int _{\delta^+ / h}^{+\infty} + \int _{-\infty}^{\delta^- / h}   \right) | K^{\prime} ( x ) | dx + \frac{\Delta}{h} V_{-\infty}^{\infty} (K^{\prime})
	= o(1).
\end{split}
\end{equation*}

Combining previous equations together, we get
$$
\overline{D}_{111} , \underline{D}_{111}, D_{111} 
= 
\frac{ (K(A+) - K(B-)) f(\tau) }{ 2 } \frac{\Delta}{h} + o ( \frac{\Delta}{h} ) ,\quad 
|D_{112}| 
= 
o (\frac{\Delta}{h}),
$$
and thus we have the first order approximation of $D_1$ as the following
$$
D_{1} = \frac{ (K(A+) - K(B-)) f(\tau) }{ 2 } \frac{\Delta}{h} + o(\frac{\Delta}{h}).
$$
From the previous proof, we observe that such a first order approximation is uniform for $\tau \in (0, T)$.

We then proceed to consider the general case. For simplicity of notation, we only prove the case of $m = 2$ and $m_1 = m_2 = 1$, i.e., $K_i$ is $C((A_i,B_i))$ and piecewise $C^1((A_i, B_i))$, $i = 1,2$. The proof of general case is direct generalization of this proof. Since we have already prove the case of $m=1$, we will only briefly outline some calculations.

\begin{equation*}
\begin{split}
D_1 
	= & 
	\sum _{i=1}^n \sum _{j=1}^n \left[ K_{1h}(t_{i-1} - \tau) K_{2h}(s_{j-1} - \tau) \int _{t_{i-1}}^{t_i} \int _{s_{j-1}}^{s_j} f(t,s) dtds - \int _{t_{i-1}}^{t_i} \int _{s_{j-1}}^{s_j} K_{1h}(t - \tau) K_{2h}(s - \tau) f(t,s) dtds \right]  \\
	= & 
	\sum _{i=1}^n \sum _{j=1}^n \int _{t_{i-1}}^{t_i} \int _{s_{j-1}}^{s_j} [ K_{1h}(t_{i-1} - \tau) K_{2h}(s_{j-1} - \tau) - K_{1h}(t - \tau) K_{2h}(s - \tau) ] f(t,s) dtds \\
	= & 
	\frac{1}{h^2} \sum _{i=1}^n \sum _{j=1}^n \int _{t_{i-1}}^{t_i} \int _{s_{j-1}}^{s_j} [ K_1( \frac{t_{i-1} - \tau}{h} ) K_2( \frac{s_{j-1} - \tau}{h} ) - K_1( \frac{t - \tau}{h} ) K_2( \frac{s - \tau}{h} ) ] f(t,s) dtds \\
	= & 
	\frac{1}{h^2} \sum _{i=1}^n \sum _{j=1}^n \int _{t_{i-1}}^{t_i} \int _{s_{j-1}}^{s_j} [ ( K_1( \frac{t_{i-1} - \tau}{h} ) - K_1( \frac{t - \tau}{h} ) ) K_2( \frac{s_{j-1} - \tau}{h} ) + K_1( \frac{t - \tau}{h} ) ( K_2( \frac{s_{j-1} - \tau}{h} ) - K_2( \frac{s - \tau}{h} ) ) ] f(t,s) dtds \\
	= & 
	\frac{1}{h^2} \sum _{i=1}^n \sum _{j=1}^n \int _{t_{i-1}}^{t_i} \int _{s_{j-1}}^{s_j} [ K_1^\prime ( \frac{t_{i-1} - \tau}{h} ) K_2( \frac{s_{j-1} - \tau}{h} ) \frac{t_{i-1} - t}{h} + K_1 ( \frac{t_{i-1} - \tau}{h} ) K_2^\prime( \frac{s_{j-1} - \tau}{h} ) \frac{t_{i-1} - t}{h} ] f(t,s) dtds + o(\frac{\Delta}{h})\\
	= & 
	\frac{ f(\tau,\tau) [ (K_1(A_1+) - K_1(B_1+)) \int_{A_2}^{B_2} K_2(x) dx + (K_2(A_2+) - K_2(B_2+)) \int_{A_1}^{B_1} K_1(x) ] }{2} \frac{\Delta}{h}
	 + o(\frac{\Delta}{h}).
\end{split}
\end{equation*}
Finally, if we notice the additivity, we get our final conclusion.

\subsubsection*{Proof of Lemma \ref{D2Lemma}}

We will only prove the case of $m=2$ for simplicity of notation. The proof of general case is quite clear from the proof below. 

For any $\epsilon \in (0,\min (\tau , T - \tau))$, define $S_\epsilon = \{|t - \tau|, |s - \tau| \leq \epsilon \}$ and we divide the integration into two parts:
\begin{equation*}
\begin{split}
\int_0^T \int_0^T K_h(t - \tau) K_h(s - \tau) f(t,s)dtds
= &
\left( \int_{S_\epsilon} + \int_{ [0,T]^2 - S_\epsilon } \right) K(\frac{t - \tau}{h}) K(\frac{s - \tau}{h}) \frac{1}{h^2} f(t,s)dtds 
\triangleq 
A_{\epsilon} + B_{\epsilon}
\end{split}
\end{equation*}
Then, we have
$$
| B_{\epsilon} | 
\leq 
2 M_f \int |K(t)| dt \int_{ |s| > \epsilon / h} |K(s)| ds.
$$
As to $A_\epsilon$, for all $\delta > 0$, there exists $\epsilon>0$ small enough, s.t. $\forall |t - \tau|, |s - \tau| < \epsilon$
$$
C(t,s;\tau) - \delta (t^2 + s^2)^{\gamma / 2}
\leq f(\tau + t, \tau + s) - f(\tau,\tau) 
\leq
C(t,s;\tau) + \delta (t^2 + s^2)^{\gamma / 2}.
$$
Note that if we assume that $f(\tau + t, \tau + s) - f(\tau,\tau) = C(t,s;\tau) + o((t^2 + s^2)^{\gamma / 2})$ uniformly over $\tau \in (0, T)$, then for $\delta > 0$, the $\epsilon$ can be picked such that the above holds for all $\tau \in (0, T)$.
With this set up, we can get upper bound of $A_{\epsilon} - f(t)$ as the following:
\begin{equation*}
\begin{split}
     &\quad A_{\epsilon} - f(\tau,\tau) \left( \int K(s)ds \right) ^2 \\
& =    
	\int_{S_\epsilon} K(\frac{t - \tau}{h})K(\frac{s - \tau}{h})\frac{1}{h^2} (f(t,s) - f(\tau,\tau)) dtds 
	 - f(\tau,\tau) \int_{\mathbb{R}^2 - S_\epsilon} K(\frac{t - \tau}{h}) K(\frac{s - \tau}{h})\frac{1}{h^2} dtds \\
& = 
	\left( \int_{ S_\epsilon \cap \{ K(\frac{t - \tau}{h}) K(\frac{s - \tau}{h}) \geq 0 \} } + \int_{ S_\epsilon \cap \{ K(\frac{t - \tau}{h}) K(\frac{s - \tau}{h}) < 0 \} } \right) 
	K(\frac{t - \tau}{h}) K(\frac{s - \tau}{h}) \frac{1}{h^2} (f(t,s) - f(\tau,\tau)) dtds \\
	 &\quad - f(\tau,\tau) \int_{\mathbb{R}^2 - S_\epsilon} K(\frac{t - \tau}{h}) K(\frac{s - \tau}{h})\frac{1}{h^2} dtds \\
& \leq
	\int_{ S_\epsilon \cap \{ K(\frac{t - \tau}{h}) K(\frac{s - \tau}{h}) \geq 0 \} } 
	K(\frac{t - \tau}{h}) K(\frac{s - \tau}{h}) \frac{1}{h^2} [ C(t-\tau,s-\tau ;\tau) + \delta (t^2 + s^2)^{\gamma / 2}] dtds \\
	&\quad + \int_{ S_\epsilon \cap \{ K(\frac{t - \tau}{h}) K(\frac{s - \tau}{h}) < 0 \} } 
     K(\frac{t - \tau}{h}) K(\frac{s - \tau}{h}) \frac{1}{h^2} [ C(t-\tau,s-\tau ;\tau) - \delta (t^2 + s^2)^{\gamma / 2}] dtds \\
	&\quad + 2f(\tau,\tau) \int |K(t)|dt \int_{ |s| > \epsilon / h} |K(s)| ds \\
& =
	h^{\gamma} \int_{- \epsilon / h}^{ \epsilon /h} \int_{- \epsilon / h}^{ \epsilon /h} K(t)K(s) C(t,s;\tau) dtds
+ 
	h^{\gamma} \delta \int_{- \epsilon / h}^{ \epsilon /h} \int_{- \epsilon / h}^{ \epsilon /h} |K(t)K(s)| (t^2 + s^2)^{\gamma / 2} dtds \\
&\quad + 
	2f(\tau,\tau) \int |K(t)|dt \int_{ |s| > \epsilon / h} |K(s)| ds . \\
\end{split}
\end{equation*}
Similarly, the lower bound is the following:
\begin{equation*}
\begin{split}
&\quad A_{\epsilon} - f(\tau,\tau) \left( \int K(s)ds \right) ^2 \\
& \geq h^{\gamma} \int_{- \epsilon / h}^{ \epsilon /h} \int_{- \epsilon / h}^{ \epsilon /h} K(t)K(s) C(t,s;\tau) dtds
- 
h^{\gamma} \delta \int_{- \epsilon / h}^{ \epsilon /h} \int_{- \epsilon / h}^{ \epsilon /h} |K(t)K(s)| (t^2 + s^2)^{\gamma / 2} dtds \\
&\quad - 
2f(\tau,\tau) \int |K(t)|dt \int_{ |s| > \epsilon / h} |K(s)| ds. \\
\end{split}
\end{equation*}
For any $\delta$, we can find satisfactory $\epsilon$ and we fix this two numbers. Then we let $h \rightarrow 0$. By l'Hopital rule, we have:
$$
\lim _{h\rightarrow 0} \frac{\int_{\epsilon / h }^{\infty} K(s) ds}{h^{\gamma}}
= 
\lim _{h\rightarrow 0} \frac{ \epsilon h^{-2} K(\frac{\epsilon}{h}) } {\gamma h^{\gamma - 1}} 
= 
\lim _{h\rightarrow 0} \frac{ \epsilon K(\frac{\epsilon}{h}) } {\gamma h^{\gamma + 1}} 
= 
\lim _{x\rightarrow \infty} C K(x)x^{\gamma + 1} \rightarrow 0.
$$
Therefore, we have
\begin{equation*}
\begin{split}
&\quad h^{ - \gamma} \left|  A_{\epsilon} + B_{\epsilon} - f(\tau,\tau) \left( \int K(s)ds \right)^2
-
h^{\gamma} \int_{-\epsilon/h}^{\epsilon /h} \int_{-\epsilon/h}^{\epsilon /h} K(t)K(s) C(t,s;\tau) dtds \right| \\
& \leq
\delta \iint |K(t)K(s)| (t^2 + s^2)^{\gamma / 2} dtds
+
h^{-\gamma} |M_f + 2f(\tau,\tau)| \int |K(t)|dt \int_{ |s| > \epsilon / h} |K(s)| ds. \\
\end{split}
\end{equation*}
Now let $h \rightarrow 0$, and notice that $\delta$ is arbitrary, we have $D_2(f) = h^\gamma \iint K(t)K(s) C(t,s;\tau) dtds + o(h^\gamma) $.

\subsubsection*{Proof of Theorem \ref{CstTSVV}}
The $t_b$ here is basically to rule our boundary effects and for brevity of notation, we will write $t_b = 0$ and assume we have a left side estimator near $t = 0$ and a right side estimator near $T = t$, with the same convergence rate. Define the error terms from left side estimation and right side estimation as the following:
$$
l_i = \hat{\sigma}_{l, t_i}^2 - \sigma_{t_i}^2, \quad r_i = \hat{\sigma}_{r, t_i}^2 - \sigma_{t_i}^2.
$$
We will consider a slightly different estimator as the following:
\begin{equation}\label{eq:TSRVV2}
   \widehat{IVV}_T^{\text{(tsrvv)}}= \frac{1}{k} \sum_{i = 0}^{n - k} (\Delta_i^{(k)} \hat{\sigma}^2)^2  - \frac{1}{k} \sum_{i = 0}^{n - 1} (\Delta_i \hat{\sigma}^2)^2.
\end{equation}
The two summations can be written as
\begin{equation*}
\begin{split}
\sum_{i = 0}^{n - k} (\Delta_i^{(k)} \hat{\sigma}^2)^2
& =
\sum_{i = 0}^{n - k} (\Delta_i^{(k)} \sigma^2)^2 + 2 \sum_{i = 0}^{n - k} (\sigma_{t_{i + k}}^2 - \sigma_{t_i}^2) (r_{i + k} - l_i) + \sum_{i = 0}^{n - k} l_i^2 + \sum_{i = k}^{n} r_i^2 - 2 \sum_{i = 0}^{n - k} l_i r_{i + k}, \\
\sum_{i = 0}^{n - 1} (\Delta_i \hat{\sigma}^2)^2
& =
\sum_{i = 0}^{n - 1} (\Delta_i \sigma^2)^2 + 2 \sum_{i = 0}^{n - 1} (\sigma_{t_{i + 1}}^2 - \sigma_{t_i}^2) (r_{i + 1} - l_i) + \sum_{i = 0}^{n - 1} l_i^2 + \sum_{i = 1}^{n} r_i^2 - 2 \sum_{i = 0}^{n - 1} l_i r_{i + 1}. \\
\end{split}
\end{equation*}
Putting them together, we get
\begin{equation*}
\begin{split}
\widehat{IVV}_T^{\text{(tsrvv)}}
& = 
\frac{1}{k} \left[ 
\sum_{i = 0}^{n - k} (\Delta_i^{(k)} \sigma^2)^2
-
\sum_{i = 0}^{n - 1} (\Delta_i \sigma^2)^2
+
2 \sum_{i = n - k + 1}^{n - 1} \sigma_{t_i}^2 l_i
-
2 \sum_{i = 1}^{k - 1} \sigma_{t_i}^2 r_i \right. \\
&\quad +
2 \sum_{i = k}^{n - 1} (\sigma_{t_{i}}^2 - \sigma_{t_{i - k + 1}}^2) r_{i + 1}
-
2 \sum_{i = 0}^{n - k} (\sigma_{t_{i + k}}^2 - \sigma_{t_{i + 1}}^2) l_i
+
2 \sum_{i = 0}^{k - 1} \sigma_{t_{i}}^2 r_{i + 1}
-
2 \sum_{i = n - k + 1}^{n - 1} \sigma_{t_{i + 1}}^2 l_i \\
&\quad \left. -
\sum_{i = n - k + 1}^{n - 1} l^2_{i}
-
\sum_{i = 1}^{k - 1} r^2_{i}
-
2 \sum_{i = 0}^{n - k} l_i r_{i + k}
+
2 \sum_{i = 0}^{n - 1} l_i r_{i + 1} \right].
\end{split}
\end{equation*}
Now we consider the convergence rate separately, and for each pair of similar terms, we consider only one of them and the other one has the same convergence rate. Indeed, with some additional assumptions, we have
\begin{equation*}
\begin{split}
& \frac{1}{k} \mathbb{E} \left| \sum_{i = k}^{n - 1} (\sigma_{t_{i}}^2 - \sigma_{t_{i - k + 1}}^2) r_{i + 1} \right|
\leq
\frac{1}{k} \sqrt{ \sum_{i = 0}^{n - k} \mathbb{E} [(\sigma_{t_{i + k}}^2 - \sigma_{t_{i + 1}}^2)^2] \sum_{i = 0}^{n - k} \mathbb{E}(r^2_{i + 1}) }
=
O(\frac{n^{1/4}}{k^{1/2}}), \\
& \frac{1}{k} \mathbb{E} \left| \sum_{i = n - k + 1}^{n - 1} \sigma_{t_i}^2 l_i \right|
\leq
\frac{1}{k} \sqrt{\sum_{i = n - k + 1}^{n - 1} \mathbb{E} (\sigma_{t_i}^4) \sum_{i = n - k + 1}^{n - 1} \mathbb{E} (l_i^2)}
 = O(\frac{1}{n^{1/4}}), \\
& \frac{1}{k} \mathbb{E} \left| \sum_{i = n - k + 1}^{n - 1} l^2_{i} \right| 
= 
O (\frac{1}{\sqrt{n}}), \quad
\frac{1}{k} \mathbb{E} \left| \sum_{i = 0}^{n - k} l_i r_{i + k} \right| 
\leq \frac{1}{k} \sqrt{ \sum_{i = 0}^{n - k} \mathbb{E} (l^2_i) \sum_{i = 0}^{n - k} \mathbb{E} (r^2_{i + k}) } = O (\frac{\sqrt{n}}{k}).
\end{split}
\end{equation*}
Similarly, we can see that the difference between \eqref{eq:TSRVV} and \eqref{eq:TSRVV2} is $O_p(\Delta)$. Putting all these together, we get
\begin{equation}\label{eq:TSRVV_first_part_error}
\mbox{TSRVV} 
- 
\sum_{i = 0}^{n - k} (\Delta_i^{(k)} \sigma^2)^2
-
\sum_{i = 0}^{n - 1} (\Delta_i \sigma^2)^2 = O_p(\frac{n^{1/4}}{k^{1/2}}).
\end{equation}

With similar assumptions and proofs as Theorem 2 and 3 of \cite{Two_Time_Scale}, we have the following:
\begin{equation}\label{eq:TSRVV_second_part_error}
\frac{1}{k} \left[ 
\sum_{i = 0}^{n - k} (\Delta_i^{(k)} \sigma^2)^2
-
\sum_{i = 0}^{n - 1} (\Delta_i \sigma^2)^2
\right]
-
\int_0^T \Lambda_t^2 dt
=
O_p(\sqrt{\frac{k}{n}}).
\end{equation}
Therefore, we have
$$
\mbox{TSRVV} - \int_0^T \Lambda_t^2 dt = O_p(\frac{n^{1/4}}{k^{1/2}}) + O_p(\sqrt{\frac{k}{n}})
$$
which yields that the optimal $k$ is given by $Cn^{3/4}$, in which case the convergence rate is $n^{-1/8}$.

\section{{Other Technical Proofs}}\label{More_Technical_Details}

\subsection*{Proof of (\ref{MSE_Deduction1})}
First, we have the following:
\begin{equation*}
\begin{split}
(\Delta_i X)^2 - \Delta \sigma^2_{\tau}
=
\left( \int_{t_{i-1}}^{t_i} \mu_t dt \right) ^2 
+ 
2 \int_{t_{i-1}}^{t_i} \mu_t dt \int_{t_{i-1}}^{t_i} \sigma_t dB_t 
+ 
\left( \left( \int_{t_{i-1}}^{t_i} \sigma_t dB_t \right) ^2
- 
\Delta \sigma^2_{\tau} \right).
\end{split}
\end{equation*}
Now, we demonstrate that all the term involving $\mu$ will be of $o(\frac{\Delta}{h})$ and thus the third equality is true. Here we take two terms for example. Under Assumption \ref{IndependentCondition}, we can condition on $\mathscr{F}^{\mu,\sigma} = \sigma(\mu_t,\sigma_t: t \geq 0)$ to get:
\begin{equation*}
\begin{split}
& \sum_{i=1}^n \sum_{j=1}^n K_h(t_{i-1} - \tau) K_h(t_{j-1} - \tau) \mathbb{E} 
\left[ 
\int_{t_{i-1}}^{t_i} \mu_t dt \int_{t_{i-1}}^{t_i} \sigma_t dB_t  
\left( \left( \int_{t_{j-1}}^{t_j} \sigma_t dB_t \right) ^2
- 
\Delta \sigma^2_{\tau} \right)
\right] \\
= &
\sum_{i=1}^n \sum_{j=1}^n K_h(t_{i-1} - \tau) K_h(t_{j-1} - \tau) \mathbb{E} 
\left[ 
\int_{t_{i-1}}^{t_i} \mu_t dt 
\int_{t_{i-1}}^{t_i} \sigma_t dB_t
\left( \int_{t_{j-1}}^{t_j} \sigma_t dB_t \right) ^2
-
\int_{t_{i-1}}^{t_i} \mu_t dt \int_{t_{i-1}}^{t_i} \sigma_t dB_t \Delta \sigma_\tau^2
\right]
= 0 .
\end{split}
\end{equation*}
For another term, we have
\begin{equation*}
\begin{split}
\sum_{i \neq j} K_h(t_{i-1} - \tau) K_h(t_{j-1} - \tau) 
\mathbb{E} 
\left[ 
\int_{t_{i-1}}^{t_i} \mu_t dt \int_{t_{i-1}}^{t_i} \sigma_t dB_t  
\int_{t_{j-1}}^{t_j} \mu_t dt \int_{t_{j-1}}^{t_j} \sigma_t dB_t  
\right]
= 0 ,
\end{split}
\end{equation*}
and
\begin{equation*}
\begin{split}
& \left|
\sum_{i=1}^n K^2_h(t_{i-1} - \tau) 
\mathbb{E} 
\left[ 
\left(
\int_{t_{i-1}}^{t_i} \mu_t dt \int_{t_{i-1}}^{t_i} \sigma_t dB_t  
\right)^2
\right] \right| 
\leq
\sum_{i=1}^n K^2_h(t_{i-1} - \tau) \Delta
\mathbb{E} 
\left[ 
\int_{t_{i-1}}^{t_i} \mu^2_t dt \int_{t_{i-1}}^{t_i} \sigma^2_t dt 
\right] \\
& 
\leq
\sum_{i=1}^n K^2_h(t_{i-1} - \tau) \Delta
\sqrt{ \mathbb{E} \left[ \int_{t_{i-1}}^{t_i} \mu^2_t dt \right]^2 }
\sqrt{ \mathbb{E} \left[ \int_{t_{i-1}}^{t_i} \sigma^2_t dt \right]^2 }
\leq
\sum_{i=1}^n K^2_h(t_{i-1} - \tau) \Delta^2
\sqrt{ \mathbb{E} \left[ \int_{t_{i-1}}^{t_i} \mu^4_t dt \right] }
\sqrt{ \mathbb{E} \left[ \int_{t_{i-1}}^{t_i} \sigma^4_t dt \right] } \\ 
& \leq
M_T \sum_{i=1}^n K^2_h(t_{i-1} - \tau)
\Delta^3
= O(\Delta) = o(\frac{\Delta}{h})
\end{split}
\end{equation*}
where $M_T$ is given in Assumption \ref{Boundedness_Condition}. Combining the two above together, we get
\begin{equation*}
\begin{split}
\sum_{i = 1}^n \sum_{j = 1}^n K_h(t_{i-1} - \tau) K_h(t_{j-1} - \tau) 
\mathbb{E} 
\left[ 
\int_{t_{i-1}}^{t_i} \mu_t dt \int_{t_{i-1}}^{t_i} \sigma_t dB_t  
\int_{t_{j-1}}^{t_j} \mu_t dt \int_{t_{j-1}}^{t_j} \sigma_t dB_t  
\right]
= o(\frac{\Delta}{h}).
\end{split}
\end{equation*}
Using similar technique, we can prove the following:
\begin{equation*}
\begin{split}
\sum_{i = 1}^n \sum_{j = 1}^n K_h(t_{i-1} - \tau) K_h(t_{j-1} - \tau) 
\mathbb{E} 
\left[ 
\left( \int_{t_{i-1}}^{t_i} \mu_t dt \right)^2
\left( \int_{t_{j-1}}^{t_j} \mu_t dt \right)^2
\right]
& = o(\frac{\Delta}{h}), \\
\sum_{i = 1}^n \sum_{j = 1}^n K_h(t_{i-1} - \tau) K_h(t_{j-1} - \tau) 
\mathbb{E} 
\left[ 
\left( \int_{t_{i-1}}^{t_i} \mu_t dt \right)^2
\int_{t_{j-1}}^{t_j} \mu_t dt \int_{t_{j-1}}^{t_j} \sigma_t dB_t
\right]
& = 0, \\
\sum_{i = 1}^n \sum_{j = 1}^n K_h(t_{i-1} - \tau) K_h(t_{j-1} - \tau) 
\mathbb{E} 
\left[ 
\left( \int_{t_{i-1}}^{t_i} \mu_t dt \right)^2
\left( \left( \int_{t_{j-1}}^{t_j} \sigma_t dB_t \right) ^2
- 
\Delta \sigma^2_{\tau} \right) 
\right]
& = o(\frac{\Delta}{h}).
\end{split}
\end{equation*}
With all these above, we finish the proof.

\subsection*{Proof of Proposition \ref{Finite_Sample_Property}}
For notation simplicity, we shall write $\Delta = T/n_k$ and $h = h_{n_k}$. Since we already know $\lim_{k \rightarrow \infty} \frac{\Delta}{h} \left( = \lim_{k \rightarrow \infty} \frac{T}{n_k h_{n_k}} \right) = 0$ and $\lim_{k \rightarrow \infty} h \left( = \lim_{k \rightarrow \infty} h_{n_k} \right) = 0$ are sufficient for the convergence of the $\mbox{MSE}^*(n,h)$ from Corollary \ref{Corollary_Convergence_of_Estimator}, we only need to prove that the convergence fails in other situations. Note that it is enough to consider the case when both the limit of $h$ and $\frac{\Delta}{h}$ exists (including convergence to infinity), since otherwise we can always choose a subsequence with the same limit of the true MSE. In what follows, we will prove that the true MSE cannot converge to zero in the following cases: (1) $h \rightarrow \infty$, (2) $\Delta \nrightarrow 0$, (3) $\Delta \rightarrow 0$ and $h \rightarrow h_0 > 0$, (4) $\Delta \rightarrow 0$, $h \rightarrow 0$ and $\frac{\Delta}{h} \rightarrow \alpha_0 > 0$.

Firstly, we prove that $h$ cannot converge to infinity. To this end, we observe the following inequality:
\begin{equation}\label{Lower_Bound_By_Expectation}
\begin{split}
\mathbb{E}[(\hat{\sigma}^2_{\tau} - \sigma^2_{\tau} )^2] 
& \geq
(\mathbb{E}[\hat{\sigma}^2_{\tau}] - \mathbb{E}[\sigma^2_{\tau}] )^2 \\&
=
\left[
\sum _{i=1}^n K_h(t_{i-1} - \tau) \left( \mathbb{E} \left( \int_{t_{i-1}}^{t_i} \mu_t dt \right)^2 + \int_{t_{i-1}}^{t_i} \mathbb{E}[\sigma_t^2] dt \right) - \mathbb{E}[\sigma_{\tau}^2]
\right]^2 .
\end{split}
\end{equation}
by Assumption \ref{IndependentCondition}. If $h \rightarrow \infty$, then
\begin{equation*}
\begin{split}
& \left|
\sum _{i=1}^n K_h(t_{i-1} - \tau) \left( \mathbb{E} \left( \int_{t_{i-1}}^{t_i} \mu_t dt \right)^2 + \int_{t_{i-1}}^{t_i} \mathbb{E}[\sigma_t^2] dt \right)
\right| \\&\quad
\leq
\sum _{i=1}^n \left| K_h(t_{i-1} - \tau) \right| \left( \Delta \int_{t_{i-1}}^{t_i} \mathbb{E} [\mu^2_t] dt + \int_{t_{i-1}}^{t_i} \mathbb{E}[\sigma_t^2] dt \right) \\&\quad 
\leq
\frac{1}{h} nM_K (\Delta^2 + \Delta) M_T
\leq
\frac{1}{h} M_K T(T + 1) M_T \rightarrow 0, \quad h \rightarrow 0 ,
\end{split}
\end{equation*}
where $M_T$ is defined in Assumption \ref{Boundedness_Condition} and $M_K$ is such that $|K(x)| < M_K$, for all $x \in \mathbb{R}$, whose existence is guaranteed by Assumption \ref{AdmissibleKernel}. Therefore, the R.H.S of (\ref{Lower_Bound_By_Expectation}) converges to $(\mathbb{E}[\sigma_\tau^2])^2 > 0$ if $h \rightarrow \infty$. We are now able to conclude that we only need to consider that $h$ converges to a finite limit.

Next, we prove that $\Delta \rightarrow 0$ must hold. First, assume that $h \rightarrow h_0 > 0$. If we do not have $\Delta \rightarrow 0$, since $n_k$ can only take integer values, it is enough to consider the case that $n_k$ and, thus, $\Delta$, are fixed. In such a case, we have the following for $k$ large enough:
\begin{equation*}
\begin{split}
\left(\sum_{i=1}^n K_h(t_{i-1} - \tau) (\Delta_i X)^2 - \sigma^2_{\tau} \right)^2
& \leq
2\left( \sum_{i=1}^n K_h(t_{i-1} - \tau) (\Delta_i X)^2 \right)^2 + 2 \sigma^4_{\tau} \\
& \leq
\frac{2M_K^2}{h_0^2}n\sum_{i = 0}^n (\Delta _i X)^4 + 2\sigma_\tau^4 ,
\end{split}
\end{equation*}
Note that $h_0$ and $n$ are fixed and $(\Delta_i X)^4$ and $\sigma_\tau ^4$ have finite expectations by Assumption \ref{Boundedness_Condition}. Therefore, we can implement Dominate Convergence Theorem to conclude that
$$
\liminf_{h \rightarrow h_0} \mbox{MSE}^*_n (h)
=
\mathbb{E} \left[  \left(\sum_{i=1}^n \liminf_{h \rightarrow h_0} K_h(t_{i-1} - \tau) (\Delta_i X)^2 - \sigma^2_{\tau} \right)^2 \right] .
$$
This equals to zero if and only if
$$
\sum_{i=1}^n \alpha_i (\Delta_i X)^2 - \sigma^2_{\tau} = 0, \mbox{ a.s.},
$$
for some $\alpha_i \in \mathbb{R}$, which is not possible by Lemma \ref{Lemma_of_NonPredictable_And_Positive_Definiteness}.

We now analyze the case of $h \rightarrow 0$ and $\Delta \nrightarrow 0$, where we may still assume a fixed $\Delta$. Consider (\ref{Lower_Bound_By_Expectation}) again. From Assumption \ref{AdmissibleKernel}, we know that if $t_{i - 1} \neq \tau$, we have 
$$
\lim_{h \rightarrow 0} K_h(t_{i - 1} - \tau) 
=
\frac{1}{t_{i - 1} - \tau} \lim_{x \rightarrow \infty} x K(x) 
= 0
.$$
Therefore, if there exists $i_0$ such that $t_{i_0} = \tau$, then $\mathbb{E}[\hat{\sigma}^2_{\tau}]$ converges to either infinity or zero, depending on if $K(0) \neq 0$ or $K(0) = 0$. Otherwise, $\mathbb{E}[\hat{\sigma}^2_{\tau}]$ always converges to zero. In both cases, we have that the true MSE of the kernel estimator does not converge to zero and, therefore, it must be true that $\Delta \rightarrow 0$.

Next, we prove that it is not possible that $\Delta \rightarrow 0$ but $h \rightarrow h_0 > 0$. Using similar arguments as the proof of Theorem \ref{Approximated_MSE_Theorem}, we have
\begin{equation*}
\begin{split}
\mathbb{E} \left(\sum_{i=1}^n K_h(t_{i-1} - \tau) (\Delta_i X)^2 - \int_0^T K_h(t - \tau) \sigma^2_{t} dt \right)^2 = o(1), \\
\mathbb{E} \left(\int_0^T K_h(t - \tau) \sigma^2_{t} dt - \int_0^T K_{h_0}(t - \tau) \sigma^2_{t} dt \right)^2 = o(1).
\end{split}
\end{equation*}
In the first equality, we use Lemma \ref{D1Lemma} and in the second equality, we notice that $K_{h_0}(t - \tau) \neq \lim K_{h}(t - \tau)$ for only finite many $t$.
By Assumption \ref{Complexity_of_Parameters}, $\mathbb{E} \left(\int_0^T K_{h_0}(t - \tau) \sigma^2_{t} dt - \sigma_\tau^2 \right)^2 \neq 0$. As a result, we have proved that the third case is not possible.

Finally, we need to consider the case that $\Delta \rightarrow 0$, $h \rightarrow 0$ and $\frac{\Delta}{h} \rightarrow \alpha_0 > 0$. We notice that
\begin{equation*}
\begin{split}
Cov((\Delta_i X)^2, (\Delta_j X)^2 | \sigma (\mu, \sigma))
 =
0, \quad
Cov((\Delta_i X)^2, (\Delta_i X)^2 | \sigma (\mu, \sigma))
 =
2\left[\int_{t_{i-1}}^{t_i} \sigma_t^2 dt\right]^2,
\end{split}
\end{equation*}
Thus, we have
\begin{equation*}
\begin{split}
 Var\left[ \sum _{i=1}^n K_h(t_{i-1} - \tau) (\Delta_i X)^2 | \sigma (\mu, \sigma) \right]
& =
\sum _{i=1}^n \sum _{j=1}^n K_h(t_{i-1} - \tau) K_h(t_{j-1} - \tau) Cov((\Delta_i X)^2, (\Delta_j X)^2 | \sigma (\mu, \sigma)) \\
& =
2 \sum _{i=1}^n K^2_h(t_{i-1} - \tau) \left[ \int_{t_{i-1}}^{t_i} \sigma_t^2 dt \right]^2 
.
\end{split}
\end{equation*}
Above, there are two possibilities. The first one is that $
\lim_{n \rightarrow \infty} \sum _{i=1}^n K^2 \left( \frac{t_{i-1} - \tau}{h} \right) = 0$, which implies $K \frac{t_{i-1} - \tau}{h} \rightarrow 0$ and thus, by Dominate Convergence Theorem, the estimator $\hat{\sigma}_\tau^2$ converges to zero in probability. 
The second case is that $
\lim_{n \rightarrow \infty} \sum _{i=1}^n K^2 \left( \frac{t_{i-1} - \tau}{h} \right) > 0$, in which case $\sum _{i=1}^n K^2_h(t_{i-1} - \tau) \left[ \int_{t_{i-1}}^{t_i} \sigma_t^2 dt \right]^2$ is bounded away from zero and thus the conditional variance is not zero. In both cases, the estimator does not converge to the true spot volatility.

\subsection*{Proof of Lemma \ref{Approximation_Bandwidth_Lemma}}
Fix an arbitrary $\epsilon > 0$. Then, there exists $\delta > 0$, such that 
$$(1 - \epsilon) f(x,y) < F(x,y) < (1 + \epsilon) f(x,y), \mbox{ for all } (x,y) \in (0, \delta) \times (0, \delta).$$
Let $m > 0$ be such that
$$F(x,y) > m, \mbox{ for all } (x,y) \in \mathbb{R}_+ \times \mathbb{R}_+ - (0,\delta) \times (0,\delta).$$
and let $I_n(\delta) := \left\lbrace y \in \mathbb{R}_+ : \left( \frac{z_n}{y}, y \right) \in (0, \delta) \times (0, \delta) \right\rbrace$.
Notice that $z_n \searrow 0$, so there exists $N \in \mathbb{N}_+$, such that for all $n > N$,
\begin{equation*}
\begin{split}
\max \left( 
\left(\frac{B\gamma z_n^\gamma}{A}\right) ^{1/(\gamma + 1)}, 
\left(\frac{A z_n}{B\gamma}\right)^{1/(\gamma + 1)} 
\right) < \delta,
\\
(1 + \epsilon) \left[
\left(\gamma A^\gamma B z_n^\gamma \right)^{1/(\gamma + 1)}
+  
\left(A^\gamma B z_n^\gamma / \gamma^\gamma \right)^{1/(\gamma + 1)}
\right]
< m .
\end{split}
\end{equation*}
These implies that for all $n > N$, we have
\begin{equation*}
\begin{split}
& \inf_{y \in I_n(\delta)} F(z_n / y, y) 
\leq 
\min_{y \in I_n(\delta)} f(z_n / y, y)(1+\epsilon) 
=
f\left( 
\left(\frac{B\gamma z_n^\gamma}{A}\right) ^{1/(\gamma + 1)}, 
\left(\frac{A z_n}{B\gamma}\right)^{1/(\gamma + 1)}  
\right) (1+\epsilon) \\&
= 
(1 + \epsilon) \left[
\left(\gamma A^\gamma B z_n^\gamma \right)^{1/(\gamma + 1)}
+  
\left(A^\gamma B z_n^\gamma / \gamma^\gamma \right)^{1/(\gamma + 1)}
\right]
< m .
\end{split}
\end{equation*}
Combining these three inequalities, we have that for $n > N$,
\begin{equation*}
\begin{split}
\inf_{y \in \mathbb{R}_+} F(z_n / y, y) 
= 
\min \left( \inf_{y \in I_n(\delta)} F(z_n / y, y) , \inf_{y \in I_n(\delta)^C} F(z_n / y, y) \right)
= \inf_{y \in I_n(\delta)} F(z_n / y, y)
< m .
\end{split}
\end{equation*}
Without loss of generality, we may assume that the above holds for all $n$.
Now we define 
$y_n = \left(\frac{A z_n}{B\gamma}\right)^{1/(\gamma + 1)}$ and we have
\begin{equation*}
\begin{split}
(1 - 2\epsilon)f(z_n / y_n, y_n) 
\leq \inf_{y \in \mathbb{R}_+} F(z_n/y, y)
\leq F(z_n / y_n, y_n)
< (1 + 2\epsilon)f(z_n / y_n, y_n) .
\end{split}
\end{equation*} 
Therefore, we have 
$$\inf_{y \in \mathbb{R}_+} F(z_n/y, y) \rightarrow 0 
, \quad
F(z_n / y_n, y_n) = \inf_{y \in \mathbb{R}_+} F(z_n/y, y) + o(\inf_{y \in \mathbb{R}_+} F(z_n/y, y)), \quad n \rightarrow \infty . $$

Then, by definition of $y_n^*$, there exists $y^{**}_n$ such that $y^*_n/y^{**}_n \rightarrow 1$ and the following holds:
\begin{equation*}
\begin{split}
(1 - 2\epsilon)f(z_n / y_n, y_n) 
\leq \inf_{y \in \mathbb{R}_+} F(z_n/y, y)
\leq F(z_n / y^{**}_n, y^{**}_n)
< (1 + 2\epsilon)f(z_n / y_n, y_n).
\end{split}
\end{equation*} 
The existence of such $y^{**}_n$ is guaranteed by $\inf_{y \in \mathbb{R}_+} F(z_n/y, y) \leq (1 + \epsilon)f(z_n / y_n, y_n) < (1 + 2\epsilon)f(z_n / y_n, y_n)$ and the fact that $\{y_{np}^*:p \in \mathbb{N}\} \cap \{y : F(z_n/y, y) < (1 + 2\epsilon)f(z_n / y_n, y_n)\} \cap \left( y_{n}^* \left( 1 - \frac{1}{n} \right), y_{n}^* \left( 1 + \frac{1}{n} \right) \right)$ is not empty.

We claim that the inequalities above imply
$$
f(z_n / y^{**}_n, y^{**}_n) 
<
\alpha f(z_n / y_n, y_n),
$$
where $\alpha = \frac{1 + 2\epsilon}{1 - 2\epsilon}$. Otherwise, we will have
$$
F(z_n / y^{**}_n, y^{**}_n) 
> 
(1 - 2\epsilon)f(z_n / y^{**}_n, y^{**}_n) 
> 
(1 + 2\epsilon) f(z_n / y_n, y_n) 
>
F(z_n / y^{**}_n, y^{**}_n) ,
$$
which is a contradiction. Since this is true for all $\epsilon > 0$, we then have $\lim _{n \rightarrow \infty} y_n / y^{**}_n = 1$, which implies $\lim _{n \rightarrow \infty} y_n / y^{*}_n = 1$. This completes the proof.

\bibliographystyle{plain}

%

\nocite{fan2008spot}
\nocite{kristensen2010nonparametric}

\nocite{durrett2010probability}
\nocite{karatzas2012brownian}
\nocite{oksendal2003stochastic}

\nocite{positive_definite_kernel}

\nocite{Two_Time_Scale}
\nocite{Multi_Time_Scale}
\nocite{barndorff2008designing}
\nocite{mancini2001disentangling}
\nocite{mancini2004estimation}
\nocite{figueroa2013optimally}

\nocite{mancini2015estimation}

\nocite{van1985mean_optKernel}
\nocite{foster1994continuous_optKernel}
\nocite{epanechnikov1969non}

\nocite{rosenblatt1956remarks}
\nocite{nadaraya1964estimating}
\nocite{watson1964smooth}

\nocite{park1992practical}
\nocite{cao1994comparative}
\nocite{jones1996brief}

\nocite{hall1983large}
\nocite{park1990comparison}

\end{document}